\newtheorem{lemma}{Lemma}[section]
\newtheorem{theorem}[lemma]{Theorem}
\newtheorem{prop}[lemma]{Proposition}
\newtheorem{claim}{Claim}
\newtheorem{cor}[lemma]{Corollary}
\theoremstyle{definition}
\newtheorem{defi}[lemma]{Definition}
\newtheorem{exa}[lemma]{Example}
\newcommand{\A}{\mathsf{A}}
\newcommand{\D}{\mathsf{D}}
\newcommand{\E}{\mathsf{E}}
\newcommand{\zz}{\mathbb{Z}}
\newcommand{\R}{\mathbf{R}}
\newcommand{\rr}{\mathbf{R}}
\newcommand{\TS}{\mathcal{S}}
\newcommand{\Ho}{\mathfrak{H}}
\newcommand{\Go}{\mathfrak{G}}
\newcommand{\sat}[1]{$#1$-saturated}
\newcommand{\h}{\mathscr{H}}
\DeclareMathOperator{\reduced}{red}
\newcommand{\redlam}{\Lambda^{\reduced}}
\DeclareMathOperator{\degr}{deg}
\DeclareMathOperator{\sgn}{sgn}
\newcommand{\nexteq}{\displaybreak[0]\\ &=}
\newcommand{\subgg}[2]{\langle\langle #1\rangle\rangle_{#2}}
\newcommand{\be}{\mathbf{e}}
\newcommand{\bu}{\boldsymbol{u}}
\newcommand{\bj}{\boldsymbol{j}}
\title{On fat Hoffman graphs with smallest eigenvalue at least $-3$}
\author{
Hye Jin Jang, Jack Koolen,\\
Akihiro Munemasa and Tetsuji Taniguchi}
\date{September 25, 2012}
\begin{document}
\maketitle

\begin{abstract}
%Hoffman graphs are a limiting object of graphs with respect to
%the smallest eigenvalue. To understand graphs with smallest 
%eigenvalue $-3$, 
We investigate fat Hoffman graphs with smallest
eigenvalue at least $-3$, using their special graphs. We show
that the special graph $\TS(\Ho)$
of an indecomposable fat Hoffman graph $\Ho$
is represented by the standard lattice or an irreducible root lattice.
Moreover, we show that if the special graph admits an integral
representation, that is, the lattice spanned by it is not an
exceptional root lattice, then the 
special graph $\TS^-(\Ho)$ is isomorphic to one of the
Dynkin graphs $\A_n$, $\D_n$, or extended Dynkin graphs
$\tilde{\A}_n$ or $\tilde{\D}_n$.
\end{abstract}

\section{Introduction}
Throughout this paper, a graph will mean an undirected graph without
loops or multiple edges.

%Hoffman graphs are limiting objects of ordinary graphs, in the
%sense that the smallest eigenvalue of a Hoffman graph is a
%limiting point of the smallest eigenvalues of ordinary
%graphs. 
Hoffman graphs were introduced by
Woo and Neumaier \cite{hlg} to extend the results of
Hoffman \cite{hoffman0}. 
Hoffman proved what we would call Hoffman's
limit theorem (Theorem~\ref{lim2}) which asserts that,
in the language of Hoffman graphs,
the smallest eigenvalue of a fat Hoffman graph is a limit
of the smallest eigenvalues of a sequence of ordinary graphs
with increasing minimum degree.
Woo and Neumaier \cite{hlg} 
gave a complete list of fat indecomposable
Hoffman graphs with smallest eigenvalue at least $-1-\sqrt{2}$.
From their list, we find that only
$-1,-2$ and $-1-\sqrt{2}$ appear as the
smallest eigenvalues.
This implies, in particular, that
$-1,-2$ and $-1-\sqrt{2}$ are limit points of the smallest
eigenvalues of a sequence of ordinary graphs
with increasing minimum degree. 
It turns out that
there are no others between $-1$ and $-1-\sqrt{2}$.
More precisely, for a negative real number $\lambda$,
consider the sequences
\begin{align*}
\eta^{(\lambda)}_k&=
\inf\{\lambda_{\min}(\Gamma)\mid 
\min\deg\Gamma\geq k,\;
\lambda_{\min}(\Gamma)>\lambda\}
\quad(k=1,2,\dots),\\
\theta^{(\lambda)}_k&=
\sup\{\lambda_{\min}(\Gamma)\mid 
\min\deg\Gamma\geq k,\;
\lambda_{\min}(\Gamma)<\lambda\}
\quad(k=1,2,\dots),
\end{align*}
where $\Gamma$ runs through graphs satisfying the conditions
specified above, namely, $\Gamma$ has minimum degree at least $k$
and $\Gamma$ has smallest eigenvalue greater than (or less than)
$\lambda$.
Then Hoffman \cite{hoffman0} has shown that
\begin{align*}
\lim_{k\to\infty}\eta^{(-2)}_k&=-1,\quad
&\lim_{k\to\infty}\theta^{(-1)}_k&=-2,\displaybreak[0]\\
\lim_{k\to\infty}\eta^{(-1-\sqrt{2})}_k&=-2,\quad
&\lim_{k\to\infty}\theta^{(-2)}_k&=-1-\sqrt{2}.
\end{align*}
In other words, real numbers in $(-2,-1)$ and $(-1-\sqrt{2},-2)$
are ignorable if our concern is the smallest eigenvalues of
graphs with large minimum degree.
Woo and Neumaier \cite{hlg} went on to prove that
\[
\lim_{k\to\infty}\eta^{(\alpha)}_k=-1-\sqrt{2},
%\quad
%\lim_{k\to\infty}\theta^{(-1-\sqrt{2})}_k=\alpha,
\]
where
$\alpha\approx-2.4812$ is a zero of the cubic polynomial
$x^3+2x^2-2x-2$.
%This sequence converges $-2$ whenenver $-2<\lambda\leq-1$.
%A similar phenomenon was proved by Hoffman for the interval
%$(-1-\sqrt{2},-2)$, and 
Recently, Yu \cite{Yu} has shown that analogous results
for regular graphs also hold.

%This is explained in the most convenient
%manner in terms of Hoffman graphs. The nonexistence of fat
%Hoffman graphs with smallest eigenvalue in $(-2,-1)$ or
%in $(-1-\sqrt{2},-2)$ is the basis of these facts, and
%On the other hand,
Woo and Neumaier \cite[Open Problem 4]{hlg}
raised the problem of classifying fat Hoffman graphs with smallest
eigenvalue at least $-3$. 
They also proposed a generalization of a concept of a line graph
based on a family of isomorphism classes of Hoffman graphs.
% This enabled one to define generalized line graphs in a very simple
This enables one to define generalized line graphs in a very simple
% 2012-05-20 by T.T
manner. As we shall see in Proposition~\ref{prop:line},
the knowledge of 
%$\mu$-maximal 
\sat{\mu}\
%$\mu$-fat-saturated 
indecomposable fat Hoffman graphs gives
a description of all fat Hoffman graphs with smallest eigenvalue
at least $\mu$. For $\mu=-3$,
this in turn should give some information on the limit points
of the smallest eigenvalues of a sequence of ordinary graphs
with increasing minimum degree. Also, using the generalized
concept of line graphs, we can expect to give a description of
all graphs with smallest eigenvalue at least $-3$ and sufficiently
large minimum degree.
Thus our ultimate goal is to classify 
%$(-3)$-maximal 
%$(-3)$-fat-saturated
\sat{(-3)}\ indecomposable fat
Hoffman graphs, as proposed by Woo and Neumaier \cite{hlg}.

The purpose of this paper is to
begin the first step of this classification, by determining
their special graphs for such Hoffman graphs having an integral
reduced representation. One of our main result is that
the special graph $\TS^-(\Ho)$ of such a Hoffman graph $\Ho$
is isomorphic to one of the Dynkin
graphs $\A_n$, $\D_n$, or extended Dynkin graphs $\tilde{\A}_n$ or $\tilde{\D}_n$.
We also show that, even when the Hoffman graph $\Ho$
does not admit an integral representation, 
its special graph $\TS(\Ho)$ can be represented by one of the exceptional
root lattices ${\E}_n$ ($n=6,7,8$). 
This might mean that the rest of the work can be completed by
a computer as in the classification of maximal exceptional graphs
(see \cite{new}). Indeed, if one attaches a fat neighbor to every
slim vertex of an ordinary maximal exceptional graph, then we
obtain a $(-3)$-indecomposable fat Hoffman graph. However, 
maximal graphs among 
$(-3)$-indecomposable fat Hoffman graphs represented in $\E_8$
are usually much larger (see Example~\ref{ex:mE8} and the comment
following it), so the problem is not as trivial as it looks. We plan to
discuss in the subsequent papers the
determination of these special graphs
and the corresponding Hoffman graphs.

\section{Hoffman graphs and their smallest eigenvalues}

\subsection{Basic theory of Hoffman graphs}
In this subsection we discuss the basic theory of Hoffman graphs. 
Hoffman graphs were introduced by Woo and Neumaier \cite{hlg},
and most of this section is due to them. Since the sums
of Hoffman graphs appear only implicitly in \cite{hlg} and
later formulated by Taniguchi \cite{paperI},
we will give proof of the results that use sums
for the convenience of the reader.

\begin{defi}\label{df:Hg}
A \emph{Hoffman graph} $\mathfrak{H}$ is a pair 
$(H,\mu)$ of a graph $H=(V,E)$ and a labeling map $\mu:V\to\{f,s\}$,
%$(H = (V, E), \mu: V \rightarrow \{ f, s\})$
satisfying the following conditions:
\begin{enumerate}
%\item $H$ is a graph;
\item \label{en:Hg1}
every vertex with label $f$ is adjacent to at least
one vertex with label $s$;
\item \label{en:Hg2}
vertices with label $f$ are pairwise non-adjacent.
\end{enumerate}
We call a vertex with label $s$ a \emph{slim vertex}, and
a vertex with label $f$ a \emph{fat vertex}. We denote by
$V_s = V_s(\Ho)$ (resp.\ $V_f(\Ho)$) the set of slim (resp.\ fat)
vertices of $\Ho$. The subgraph of a
Hoffman graph $\Ho$ induced on $V_s(\Ho)$ is called the \emph{slim
subgraph} of $\Ho$. If every slim vertex of a Hoffman graph 
$\mathfrak{H}$ has a fat neighbor, then we call $\mathfrak{H}$ \emph{fat}.

For a vertex $x$ of $\Ho$ we define $N^f(x) = N^f_{\Ho}(x)$ (resp.\
$N^s(x) = N^s_{\Ho}(x)$) the 
% number 
set of fat (resp.\ slim) neighbors of $x$ in $\Ho$.
%The total number of 
The set of all
neighbors of $x$ is denoted by $N(x) = N_{\Ho}(x)$,
	that is $N(x) = N^f(x) \cup N^s(x)$.
In a similar fashion, 
for vertices $x$ and $y$
	we define $N^f(x,y)=N^f_{\Ho}(x,y)$ to be the 
% number 
set 
of common fat neighbors of $x$ and $y$.
\end{defi}

\begin{defi}\label{df:sub}
A Hoffman graph $\Ho_1 = (H_1, \mu_1)$ is called
	an (induced) \emph{Hoffman subgraph} of another Hoffman graph $\Ho= (H, \mu)$,
	if $H_1$ is an (induced) subgraph of $H$ and 
$\mu(x) = \mu_1(x)$ for all vertices $x$ of $\Ho_1$.
%Unless we say explicitly different we will mean an induced Hoffman
%subgraph by a subgraph. 
Unless stated otherwise, by a subgraph we mean an induced Hoffman subgraph.
For a subset $S$ of $V_s(\Ho)$, we denote by $\subgg{S}{\Ho}$ the 
subgraph of $\Ho$ induced on the set of vertices
\[
% S\cup(\bigcup_{x\in S}N_\Ho^f(x)).
S\cup\left(\bigcup_{x\in S}N_\Ho^f(x)\right). % 2012-05-20 by T.T.
\]
\end{defi}

\begin{defi}\label{df:iso}
Two Hoffman graphs $\Ho= (H, \mu)$ and $\Ho'= (H', \mu')$
	are called \emph{isomorphic}
	if there exists an isomorphism from $H$ to $H'$ which preserves the labeling.
%	The \emph{isomorphism class} of a Hoffman graph $\Ho$ is denoted by $[\Ho]$.
\end{defi}
 An ordinary graph $H$ without labeling can be
regarded as a Hoffman graph $\Ho= (H, \mu)$ without any fat vertex,
	i.e., $\mu(x) = s$ for all vertices $x$.
Such a
graph is called a \emph{slim graph}.

%\subsection{Eigenvalues, Representations and Lattices}

\begin{defi}\label{df:eigen}
For a Hoffman graph $\mathfrak{H}$, let $A$ be its adjacency matrix,
\[
A=
\begin{pmatrix}
A_s & C \\
C^T & O
\end{pmatrix}
\]
in a labeling in which the fat vertices come last.
\emph{Eigenvalues} of $\Ho$ are the eigenvalues of the
real symmetric matrix $B(\Ho)=A_s-CC^T$.
Let $\lambda_{\min}(\Ho)$ denote the smallest eigenvalue of $\Ho$.
\end{defi}

\begin{defi}[{\cite{hlg}}]\label{df:rep}
For a Hoffman graph $\Ho$ and a positive integer $n$, 
a mapping $\phi:\ V(\Ho)\to \R^n$
such that
\[
(\phi (x),\phi (y))=
\begin{cases}
m & \text{if $x=y\in V_s(\Ho)$,}\\
1 & \text{if $x=y\in V_f(\Ho)$,}\\
1 & \text{if $x$ and $y$ are adjacent in $\Ho$,}\\
0 & \text{otherwise,}
\end{cases}
\] 
is called a \emph{representation of norm $m$}.
We denote by $\Lambda(\Ho,m)$
the lattice generated by $\{ \phi(x) \mid x \in V(\Ho)\}$.
Note that the isomorphism class of $\Lambda(\Ho,m)$ depends only
on $m$, and is independent of $\phi$, justifying the notation.
\end{defi}

\begin{defi}\label{df:redrep}
For a Hoffman graph $\Ho$ and a positive integer $n$, 
a mapping $\psi:\ V_s(\Ho)\to \R^n$
such that
%\begin{equation}\label{eq:red}
\[
 (\psi (x),\psi (y))=
\begin{cases}
m - |N^f_{\Ho}(x)| & 
% \text{if $x=y\in V_s(\Ho)$ and $x=y$,}\\
\text{if $x=y$,}\\
1 - |N^f_{\Ho}(x,y)| & 
%\text{if $x,y\in V_s(\Ho)$ and, $x$ and $y$ are adjacent in $\Ho$,}\\
\text{if $x$ and $y$ are adjacent,}\\
-|N^f_{\Ho}(x,y)| & \text{otherwise.}\\
\end{cases}
\]
%\end{equation}
is called a \emph{reduced representation of norm $m$}.
We denote by $\redlam(\Ho,m)$
the lattice generated by $\{ \psi(x) \mid x \in V_s(\Ho)\}$.
Note that the isomorphism class of $\redlam(\Ho,m)$ depends only
on $m$, and is independent of $\psi$, justifying the notation.
%We call a map $\psi:V_s(\Ho) \to {\bf R}^n$,
%	where $n$ is a positive integer,
%	satisfying Equation (\ref{eq:red})
%	a \emph{reduced representation of $\Ho$ of norm $m$}.
%The lattice $\Lambda^{\mbox{red}}=\Lambda^{\mbox{red}}(\Ho, \psi)$
%	is the lattice generated by $\{ \psi(x) \mid x \in V_s(\Ho)\}$.
\end{defi} 

While it is clear that a representation of norm $m>1$ is an injective
mapping, a reduced representation of norm $m$ may not be. 
%We will discuss more in Section~\ref{sec:int}.
See Section~\ref{sec:int} for more details.

\begin{lemma}\label{lm:WN0}
Let $\Ho$ be a Hoffman graph
having a representation of norm $m$.
%	$m$ a positive real number at least $-\lambda_{\min}(\Ho)$.
Then $\Ho$ has a reduced representation of norm $m$, and
$\Lambda (\Ho,m)$ is isomorphic to
$\redlam (\Ho,m)\oplus \zz^{|V_f(\Ho)|}$ as a lattice.
\end{lemma}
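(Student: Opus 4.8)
The plan is to exhibit an explicit representation of $\Ho$ of norm $m$, split it into the part coming from fat vertices and the part coming from slim vertices, and then recognize the slim part as a reduced representation. Concretely, let $\phi:V(\Ho)\to\R^n$ be the given representation of norm $m$. For each fat vertex $f\in V_f(\Ho)$ the vector $\phi(f)$ has norm $1$, and for distinct fat vertices $f,f'$ we have $(\phi(f),\phi(f'))=0$ by the ``otherwise'' clause of Definition~\ref{df:rep} (distinct fat vertices are non-adjacent by condition~(\ref{en:Hg2}) of Definition~\ref{df:Hg}). Hence $\{\phi(f)\mid f\in V_f(\Ho)\}$ is an orthonormal set, spanning a sublattice isomorphic to $\zz^{|V_f(\Ho)|}$, and I would denote by $W$ the orthogonal complement of $\mathrm{span}\{\phi(f)\}$ in $\R^n$.

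The key step is to define $\psi:V_s(\Ho)\to W$ by orthogonally projecting $\phi$ onto $W$, that is, $\psi(x)=\phi(x)-\sum_{f\in V_f(\Ho)}(\phi(x),\phi(f))\phi(f)$. Since $(\phi(x),\phi(f))$ equals $1$ when $f$ is a fat neighbor of $x$ and $0$ otherwise, this simplifies to $\psi(x)=\phi(x)-\sum_{f\in N_\Ho^f(x)}\phi(f)$. I would then compute $(\psi(x),\psi(y))$ for slim vertices $x,y$ directly: expanding the inner product and using orthonormality of the $\phi(f)$ gives $(\psi(x),\psi(y))=(\phi(x),\phi(y))-|N_\Ho^f(x,y)|$, since the surviving cross terms count exactly the common fat neighbors. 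Substituting the three cases of Definition~\ref{df:rep} for $(\phi(x),\phi(y))$ then reproduces precisely the three cases of Definition~\ref{df:redrep}: the diagonal gives $m-|N^f(x)|$, adjacent slim vertices give $1-|N^f(x,y)|$, and non-adjacent slim vertices give $-|N^f(x,y)|$. This verifies that $\psi$ is a reduced representation of norm $m$.

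It then remains to identify the lattices. By construction $\Lambda(\Ho,m)$ is generated by $\{\phi(x)\mid x\in V_s\}\cup\{\phi(f)\mid f\in V_f\}$. Using the relation $\phi(x)=\psi(x)+\sum_{f\in N^f(x)}\phi(f)$, the generating set for $\Lambda(\Ho,m)$ can be rewritten as $\{\psi(x)\mid x\in V_s\}\cup\{\phi(f)\mid f\in V_f\}$, since each $\phi(x)$ differs from $\psi(x)$ by an integer combination of the $\phi(f)$. The vectors $\psi(x)$ lie in $W$ while the $\phi(f)$ span its orthogonal complement and form an orthonormal basis of a $\zz^{|V_f(\Ho)|}$, so the lattice splits as an orthogonal direct sum $\redlam(\Ho,m)\oplus\zz^{|V_f(\Ho)|}$, as claimed.

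The main obstacle I anticipate is not conceptual but bookkeeping: one must argue carefully that replacing the generators $\phi(x)$ by $\psi(x)$ yields the \emph{same} lattice (not merely the same rational span), which follows because the change of generators is given by a unimodular, indeed triangular, integral transformation fixing the $\phi(f)$. I would also state explicitly that the orthogonality of the two summands is what upgrades the equality of underlying abelian groups to an isomorphism of lattices respecting the inner product, so that the $\oplus$ in the statement is genuinely an orthogonal direct sum.
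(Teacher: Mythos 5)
Your proposal is correct and is essentially the paper's own proof: the paper also projects $\phi$ orthogonally onto the complement of the span of $\{\phi(f)\mid f\in V_f(\Ho)\}$ and identifies the two lattice summands. Your version merely makes explicit the inner-product verification and the unimodular change of generators $\phi(x)\mapsto\psi(x)$, details the paper leaves to the reader.
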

\begin{proof}
Let $\phi:V(\Ho)\to \rr^n$ be a representation of norm $m$.
Let $U$ be the subspace of $\rr^n$
	generated by $\{\phi (x)\mid x\in V_f(\Ho)\}$.
Let $\rho$, $\rho^{\bot}$ denote the orthogonal projections
	from $\rr^n$ onto $U$, $U^{\bot}$, respectively.
Then $\rho^\perp\circ\phi$ is a reduced representation of norm $m$,
$\rho^{\bot}(\Lambda (\Ho,m))\cong\redlam (\Ho,m)$, and
$\rho (\Lambda (\Ho,m))\cong \zz^{|V_f(\Ho)|}$.
\end{proof}

%The \emph{reduced representation of $\Ho$} (with respect to $\phi$)
%is the map $\psi: V_s(\Ho) \to U^{\perp}$ defined by $\psi(x) = \rho(\phi(x))$.
%Now the following holds:

%\clb{
%\begin{theorem}[R.~Woo and A.~Neumaier]\label{thm:WN1}
\begin{theorem}\label{thm:WN1}
For a Hoffman graph $\Ho$,
	the following conditions are equivalent:
	\begin{enumerate}
	\item $\Ho$ has a representation of norm $m$,
	\item $\Ho$ has a reduced representation of norm $m$,
	\item $\lambda_{\min}(\Ho)\ge -m$.
	\end{enumerate}
\end{theorem}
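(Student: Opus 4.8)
The plan is to translate each of the three conditions into the positive semidefiniteness of an explicit symmetric matrix, using the standard fact that a real symmetric matrix is the Gram matrix of a family of vectors in some $\R^n$ if and only if it is positive semidefinite. First I would write down the two relevant Gram matrices. Ordering the fat vertices last, the inner products prescribed in Definition~\ref{df:rep} assemble into
\[
G=\begin{pmatrix} mI_s+A_s & C\\ C^T & I_f\end{pmatrix},
\]
where $I_s,I_f$ are the identity matrices indexed by the slim and fat vertices; thus $\Ho$ has a representation of norm $m$ iff $G\succeq0$. Since $(CC^T)_{xx}=|N^f(x)|$ and $(CC^T)_{xy}=|N^f(x,y)|$, the inner products prescribed in Definition~\ref{df:redrep} assemble into
\[
G^{\reduced}=mI_s+A_s-CC^T=mI_s+B(\Ho),
\]
so $\Ho$ has a reduced representation of norm $m$ iff $G^{\reduced}\succeq0$.

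With this dictionary the equivalence of (ii) and (iii) is immediate: $mI_s+B(\Ho)\succeq0$ says exactly that every eigenvalue of $B(\Ho)$ is at least $-m$, i.e.\ $\lambda_{\min}(\Ho)\ge-m$. To bring in (i), the key observation is that $G^{\reduced}$ is precisely the Schur complement of the positive definite block $I_f$ in $G$; since $I_f\succ0$, the Schur complement criterion yields $G\succeq0\iff G^{\reduced}\succeq0$, which ties all three conditions together.

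If I preferred to avoid the Schur-complement machinery, I would instead obtain (i)$\Rightarrow$(ii) directly from Lemma~\ref{lm:WN0} and prove (ii)$\Rightarrow$(i) by an explicit construction: starting from a reduced representation $\psi$, choose an orthonormal family $\{e_f\mid f\in V_f(\Ho)\}$ orthogonal to the image of $\psi$, and set $\phi(f)=e_f$ for fat $f$ and $\phi(x)=\psi(x)+\sum_{f\in N^f(x)}e_f$ for slim $x$. A short case check then confirms $\phi$ is a representation of norm $m$; for instance $(\phi(x),\phi(x))=(m-|N^f(x)|)+|N^f(x)|=m$, and for distinct slim $x,y$ the fat contribution $|N^f(x,y)|$ exactly cancels the term built into $\psi$. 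I do not anticipate a genuine obstacle here: the whole content is the equivalence ``Gram matrix $\leftrightarrow$ positive semidefinite,'' and the only care needed is the bookkeeping of signs and of the entries of $CC^T$ so that $G^{\reduced}=mI_s+B(\Ho)$, together with checking all four inner-product cases for the explicit $\phi$.
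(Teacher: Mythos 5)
Your proposal is correct, and it is in fact more self-contained than the paper's own proof. The paper only argues two of the implications directly: (i)$\Rightarrow$(ii) via Lemma~\ref{lm:WN0} (projecting a representation onto the orthogonal complement of the span of the fat vertices' images), and (ii)$\Rightarrow$(iii) by the same observation you make, that $B(\Ho)+mI$ is the Gram matrix of the reduced representation; the remaining equivalence of (i) and (iii) is delegated to Woo--Neumaier \cite[Theorem~3.2]{hlg}. You close that gap in two ways, both sound: the Schur-complement identity correctly shows that $G\succeq0$ iff $mI_s+A_s-CI_f^{-1}C^T=mI_s+B(\Ho)\succeq0$ (the block $I_f$ is positive definite because fat vertices are pairwise non-adjacent, so the criterion applies), and combined with the standard fact that a positive semidefinite matrix is a Gram matrix, this yields (iii)$\Rightarrow$(ii)$\Rightarrow$(i) as well. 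Your explicit lift $\phi(x)=\psi(x)+\sum_{f\in N^f(x)}\be_f$, $\phi(f)=\be_f$ is exactly the inverse of the projection used in Lemma~\ref{lm:WN0}, and all four inner-product cases check out as you indicate (including slim--fat pairs, where $(\phi(x),\phi(f))=1$ iff $f\in N^f(x)$, and fat--fat pairs, where orthonormality matches non-adjacency). What your approach buys is a complete proof inside the paper at the cost of invoking the Schur-complement criterion; what the paper's route buys is brevity and reuse of Lemma~\ref{lm:WN0}, at the cost of leaving the hardest implication, (iii)$\Rightarrow$(i), to an external reference.
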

%}
%\clb{
\begin{proof}
%Let $\phi$ be a representation of $\Ho$ of norm $m$.
%%For a Hoffman graph $\Ho$
%%and its representation $\phi: V(\Ho)\to {\bf R}^n$ 
%%of norm $m$, 
%Let $U$ be the subspace of $\R^n$ generated
%by $\{ \phi(x) \mid x \in V_f(\Ho)\}$,
%%Let $U^{\perp}$ be the orthogonal complement of $U$,
%%(with respect to the usual inner product).
%and let $\rho: \R^n \to U^{\perp}$ be
%the orthogonal projection of $\R^n$ onto $U^{\perp}$. 
%Then $\psi = \rho\phi$ is a reduced representation of
%norm $m$.
%This proves (i) $\implies$ (ii).
From Lemma~\ref{lm:WN0},
	we see that (i) implies (ii).

%For $x,y\in V_s(\Ho)$,
%\begin{equation}\label{eq:matrix}
%	(B_\Ho+mI)_{xy}=(\psi(x),\psi(y)).
%\end{equation}
Let $\psi$ be a reduced representation of $\Ho$ of norm $m$.
Then the matrix $B(\Ho)+mI$ is the Gram matrix of the
image of $\psi$, and hence positive semidefinite.
%For $x, y \in V_s(\Ho)$ define $F(x,y) := $ number of common fat neighbours,
%	where $F(x,x)$ is just the number of fat neighbours of $x$. 
%Now, it is easily seen that
%the matrix $(D_{xy})_{x,y\in V_s(\Ho)}$ defined
%by $D_{xy} = (\psi (x),\psi (y))$ is nothing else then 
%$B(\Ho) + m I$.
This implies that
$B(\Ho)$ has smallest eigenvalue at least $-m$
and hence $\lambda_{\min}(\Ho) \geq -m$. 
This proves (ii) $\implies$ (iii).

The proof of equivalence of (i) and (iii) can be found
in \cite[Theorem~3.2]{hlg}.
%To show (iii) $\implies$ (ii), note that $\lambda_{\min}(\Ho) \geq -m$
%is equivalent with the fact that  the matrix $B(\Ho)+mI$ is positive
%definite and hence we obtain 
%$B(\Ho)+mI = Q^TQ$ for some real matrix $Q$. 
%Now the columns of $Q$ give a reduced representation of $\Ho$. 
%The implication  (ii) $\implies$ (i) follows from the fact that
%	from a reduced representation of norm $m$ one easily
%	obtains a representation of norm $m$. 
\end{proof}
%}

From Theorem~\ref{thm:WN1},
we obtain the following lemma:

\begin{lemma}\label{lem:eig}
If $\Go$ is a subgraph of a Hoffman graph $\Ho$,
then $\lambda_{\min}(\Go)\geq\lambda_{\min}(\Ho)$ holds.
\end{lemma}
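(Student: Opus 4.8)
The plan is to reduce the monotonicity statement to the representation-theoretic characterization just established in Theorem~\ref{thm:WN1}. The key observation is that $\lambda_{\min}(\Ho)\geq -m$ is equivalent to the existence of a reduced representation of norm $m$, so it suffices to show that any reduced representation of $\Ho$ restricts to a reduced representation of the subgraph $\Go$. Concretely, I would set $m=-\lambda_{\min}(\Ho)$ (or any $m$ with $\lambda_{\min}(\Ho)\geq -m$), invoke Theorem~\ref{thm:WN1} to obtain a reduced representation $\psi:V_s(\Ho)\to\R^n$ of norm $m$, and then examine $\psi$ restricted to $V_s(\Go)\subseteq V_s(\Ho)$.

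The main point to verify is that the restriction $\psi|_{V_s(\Go)}$ genuinely satisfies the defining inner-product conditions of Definition~\ref{df:redrep} relative to $\Go$, not relative to $\Ho$. Since $\Go$ is an induced Hoffman subgraph, adjacency among slim vertices of $\Go$ agrees with adjacency in $\Ho$, and the labeling is inherited. The one subtlety is that the reduced representation involves the fat-neighbor counts $|N^f_{\Ho}(x)|$ and $|N^f_{\Ho}(x,y)|$: I must check that for $x,y\in V_s(\Go)$ these counts computed in $\Go$ equal those computed in $\Ho$. Because $\Go$ is an \emph{induced} subgraph, every fat vertex of $\Ho$ adjacent to a slim vertex $x$ of $\Go$ is itself a vertex of $\Go$ (the subgraph $\subgg{S}{\Ho}$ construction, and more generally the induced-subgraph convention, ensures the relevant fat vertices are retained), so $N^f_{\Go}(x)=N^f_{\Ho}(x)$ and similarly for common neighbors. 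Hence the inner products $(\psi(x),\psi(y))$ prescribed for $\Go$ match exactly those prescribed for $\Ho$, and $\psi|_{V_s(\Go)}$ is a reduced representation of $\Go$ of norm $m$.

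With that in hand, Theorem~\ref{thm:WN1} applied to $\Go$ immediately gives $\lambda_{\min}(\Go)\geq -m=\lambda_{\min}(\Ho)$, completing the argument.

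I expect the only genuine obstacle to be the bookkeeping about fat neighbors: one must be careful that passing to an induced Hoffman subgraph does not delete a fat vertex that is adjacent to a retained slim vertex, which would change the norm-$m$ conditions. If the induced-subgraph convention did not automatically retain such fat vertices, the restricted map would fail to be a reduced representation and the inequality could break. Once this point is settled—either by the definition of induced Hoffman subgraph or by restricting attention to subgraphs of the form $\subgg{S}{\Ho}$—the rest is a direct appeal to the equivalence of (ii) and (iii) in Theorem~\ref{thm:WN1}.
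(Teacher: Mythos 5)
There is a genuine gap, and it sits exactly at the point you flagged as the ``one subtlety.'' Your claim that the induced-subgraph convention forces $N^f_{\Go}(x)=N^f_{\Ho}(x)$ for slim $x\in V_s(\Go)$ is false. Definition~\ref{df:sub} only requires that the underlying graph of $\Go$ be an induced subgraph of that of $\Ho$ \emph{on the vertex set of $\Go$}, with matching labels; nothing forces the vertex set of $\Go$ to contain all fat neighbors of its slim vertices. A Hoffman subgraph may drop fat vertices freely, as long as each retained fat vertex keeps a slim neighbor. The paper itself relies on this generality: immediately after the lemma it deduces $\lambda_{\min}(\Gamma)\geq\lambda_{\min}(\Ho)$ for the \emph{slim} subgraph $\Gamma$, which retains no fat vertices at all, so $N^f_{\Gamma}(x)=\emptyset$ while $N^f_{\Ho}(x)$ may be large. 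For such $\Go$ your restricted map $\psi|_{V_s(\Go)}$ is not a reduced representation of $\Go$ of norm $m$: its norms $(\psi(x),\psi(x))=m-|N^f_{\Ho}(x)|$ are too small compared with the required $m-|N^f_{\Go}(x)|$, and similarly for the cross inner products. Your argument only covers subgraphs of the special form $\subgg{S}{\Ho}$, which is strictly weaker than the lemma.

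The paper avoids this entirely by working with the \emph{unreduced} representation: take $\phi:V(\Ho)\to\R^n$ of norm $m$ from Theorem~\ref{thm:WN1}(i), and restrict it to $V(\Go)$. The defining conditions of Definition~\ref{df:rep} involve only the labels of individual vertices and adjacency of pairs, both of which are preserved verbatim under passage to an induced Hoffman subgraph, so the restriction is a representation of norm $m$ of $\Go$ with no bookkeeping at all; then (i)$\Rightarrow$(iii) of Theorem~\ref{thm:WN1} finishes. If you insist on reduced representations, your approach can be repaired: for each $x\in V_s(\Go)$ set $\psi'(x)=\psi(x)\oplus\sum_{f}\be_f$, the sum over $f\in N^f_{\Ho}(x)\setminus V_f(\Go)$ in fresh orthonormal coordinates indexed by the deleted fat vertices; since $N^f_{\Go}(x,y)=N^f_{\Ho}(x,y)\cap V_f(\Go)$, one checks that $\psi'$ is a reduced representation of $\Go$ of norm $m$. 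But this repair is precisely the reconstruction of the full representation coordinates for the deleted fat vertices, i.e., it re-derives the paper's one-line proof by hand.
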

\begin{proof} 
Let $m := -\lambda_{\min}(\Ho)$. Then $\Ho$ has a representation $\phi$ of
norm $m$ by Theorem~\ref{thm:WN1}. Restricting $\phi$ to the vertices of
$\Go$ we obtain a representation of norm $m$ of $\Go$, which implies 
$\lambda_{\min}(\Go) \geq -m$ by Theorem~\ref{thm:WN1}.
%See \cite{hlg}.
%\clr{\bf Do we need to add?}
\end{proof}

In particular, if $\Gamma$ is the slim
subgraph of $\Ho$, then $\lambda_{\min}(\Gamma)\geq
\lambda_{\min}(\Ho)$.

Under a certain condition, equality holds in Lemma~\ref{lem:eig}.
To state this condition we need to introduce decompositions
of Hoffman graphs. This was formulated first by the third author
\cite{paperI}, although it was already implicit in \cite{hlg}.

\begin{defi}\label{df:sum}
Let $\Ho$ be a Hoffman graph,
	and let $\Ho^1$ and $\Ho^2$ be two non-empty induced Hoffman subgraphs of $\Ho$.
	The Hoffman graph $\Ho$ is said to be the {\em sum}\/ of $\Ho^1$ and $\Ho^2$,
	written as $\Ho = \Ho^1 \uplus \Ho^2$,
	if the following conditions are satisfied:
\begin{enumerate}
\item $V(\Ho) = V(\Ho^1) \cup V(\Ho^2)$;
\item $\{V_s(\Ho^1), V_s(\Ho^2)\}$ is a partition of $V_s(\Ho)$;
\item if $x\in V_s(\Ho^i)$, $y\in V_f(\Ho)$ and $x\sim y$, then $y\in V_f(\Ho^i)$;
\item if $x\in V_s(\Ho^1)$, $y\in V_s(\Ho^2)$, 
then $x$ and $y$ have at most one common fat neighbor,
and they have one if and only if they are adjacent.
%\begin{defi}\label{df:1}
%Let $\Ho$ be a Hoffman graph, and let $\Ho^i$ $(i=1,\ldots,n)$ be
%a family of subgraphs of $\Ho$. The Hoffman graph $\Ho$ is said to
%be the {\em sum}\/ of $\Ho^i$ $(i=1,\ldots,n)$, denoted
%\begin{equation}\label{eq:lem0a}
%\Ho=\biguplus_{i=1}^n \Ho^i,
%\end{equation}
%if the following conditions are satisfied:
%\begin{enumerate}
%\item $V(\Ho)=\bigcup_{i=1}^n V(\Ho^i)$;
%\item $V_s(\Ho^i)\cap V_s(\Ho^j)=\emptyset$ if $i\neq j$;
%\item if $x\in V_s(\Ho^i)$, $y\in V_f(\Ho)$ and $x\sim y$,
%then $y\in V(\Ho^i)$;
%\item if $x\in V_s(\Ho^i)$, $y\in V_s(\Ho^j)$, and $i\neq j$,
%then $x$ and $y$ have at most one common fat neighbor,
%and they have one if and only if they are adjacent.
\end{enumerate}
If $\Ho=\Ho^1\uplus \Ho^2$ for some non-empty subgraphs
	$\Ho^1$ and $\Ho^2$,
	then we call $\Ho$ {\em decomposable}.
Otherwise $\Ho$ is called {\em indecomposable}.
Clearly,
	a disconnected Hoffman graph is decomposable.
\end{defi}

It follows easily that the above-defined sum is associative and
so that the sum
%\begin{equation}\label{eq:lem0a}
\[
\Ho=\biguplus_{i=1}^n \Ho^i
\]
%\end{equation}
is well-defined.
%It follows easily that the sum defined above is associative,
%in the sense that if $\Ho=\Ho^1\uplus (\Ho^2\uplus \Ho^3)$, then 
%$\Ho=(\Ho^1\uplus \Ho^2)\uplus \Ho^3$ and vice versa. 
%Instead of $\Ho = (( \ldots ( \Ho^1 \uplus \Ho^2) \uplus \Ho^3)  \ldots \Ho_n)$,
%we write \begin{equation}\label{eq:lem0a}
%\Ho=\biguplus_{i=1}^n \Ho^i,
%\end{equation} and
%we say that $\Ho$  is the {\em sum}\/ of $\Ho^i$ $(i=1,\ldots,n)$. 
The main reason for defining the sum of Hoffman graphs is the following lemma. 

\begin{lemma}\label{lem:decomp}
Let $\Ho$ be a Hoffman graph
	and let $\Ho^1$ and $\Ho^2$ be two (non-empty) induced Hoffman subgraphs
	of $\Ho$ satisfying {\rm(i)--(iii)} of Definition~\ref{df:sum}.
%such that $\{V_s(\Ho^1), V_s(\Ho^2)\}$ is a partition of $\Ho$ and
%$V_f(\Ho^1) \cup V_f(\Ho^2) = V_f(\Ho)$. 
%Let $m$ be a positive real number at least $-\lambda_{\min}(\Ho)$. \\
Let $\psi$ be a reduced representation of $\Ho$ of norm $m$.
Then the following are equivalent.
\begin{enumerate}
\item
$\Ho=\Ho^1\uplus \Ho^2$,
\item
$(\psi(x), \psi(y)) = 0$ for any 
$x \in V_s(\Ho^1)$ and $y \in V_s(\Ho^2)$.
%(i) If $\Ho=\Ho^1\uplus \Ho^2$ and $\psi$ a reduced representation of $\Ho$ of norm $m$, then, 
%for any vertex $x \in V_s(\Ho^1)$
%	and any vertex $y \in V_s(\Ho^2)$ we have $(\psi(x), \psi(y)) = 0$.
\end{enumerate}
%\\
%(ii) If $\psi$ is a reduced representation of $\Ho$ of norm $m$ and 
%$(\psi(x), \psi(y)) = 0$ for any vertex $x \in V_s(\Ho^1)$ and any
%vertex $y \in V_s(\Ho^2)$, then $\Ho=\Ho^1\uplus \Ho^2$. 
\end{lemma}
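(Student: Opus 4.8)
The plan is to observe that, since conditions (i)--(iii) of Definition~\ref{df:sum} are assumed to hold, the statement $\Ho=\Ho^1\uplus\Ho^2$ in (i) is equivalent to the single remaining condition (iv) of Definition~\ref{df:sum}. Thus the whole lemma reduces to showing that condition (iv) is equivalent to the orthogonality condition (ii). The strategy is a direct computation of the inner product $(\psi(x),\psi(y))$ for an arbitrary pair $x\in V_s(\Ho^1)$, $y\in V_s(\Ho^2)$, using the defining formula for a reduced representation in Definition~\ref{df:redrep}.

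First I would fix such a pair $x,y$. By condition (ii) of Definition~\ref{df:sum}, the sets $V_s(\Ho^1)$ and $V_s(\Ho^2)$ are disjoint, so $x\neq y$, and only the last two cases of the formula in Definition~\ref{df:redrep} can apply. I would split into two cases according to whether $x$ and $y$ are adjacent. If $x\sim y$, then $(\psi(x),\psi(y))=1-|N^f_{\Ho}(x,y)|$, so that $(\psi(x),\psi(y))=0$ holds precisely when $|N^f_{\Ho}(x,y)|=1$. If $x\not\sim y$, then $(\psi(x),\psi(y))=-|N^f_{\Ho}(x,y)|$, so that $(\psi(x),\psi(y))=0$ holds precisely when $|N^f_{\Ho}(x,y)|=0$.

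Combining the two cases, the orthogonality condition (ii) of the lemma, asserted for every pair $x\in V_s(\Ho^1)$ and $y\in V_s(\Ho^2)$, says exactly that adjacent pairs have exactly one common fat neighbor while non-adjacent pairs have none. Since $|N^f_{\Ho}(x,y)|$ is a nonnegative integer, this is word-for-word the content of condition (iv) of Definition~\ref{df:sum}: $x$ and $y$ have at most one common fat neighbor, and they have one if and only if they are adjacent. This yields both implications at once, completing the equivalence.

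I do not expect a genuine obstacle here, as the argument is a translation of the inner-product formula into combinatorial language. The only point deserving care is the clean bookkeeping of the case distinction and the remark that the nonnegativity and integrality of the counts $|N^f_{\Ho}(x,y)|$ are what make the vanishing of the inner product match exactly the ``at most one'' and ``if and only if'' clauses of (iv); no quantity other than $0$ or $1$ can occur as a fat-neighbor count compatible with orthogonality. It may be worth noting explicitly that conditions (i)--(iii) are precisely what guarantee that a common fat neighbor of $x$ and $y$ would have to lie in $V_f(\Ho^1)\cap V_f(\Ho^2)$, so that the computation is consistent with the subgraph structure already imposed.
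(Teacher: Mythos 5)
Your proof is correct and follows exactly the route the paper intends: the paper's own proof is just the one-line remark that the lemma ``follows easily from the definitions,'' and your case analysis of the formula in Definition~\ref{df:redrep} (adjacent: $1-|N^f_\Ho(x,y)|$; non-adjacent: $-|N^f_\Ho(x,y)|$) is precisely the easy verification being alluded to, matching condition (iv) of Definition~\ref{df:sum} term by term. Your closing observations --- that the partition in (ii) of Definition~\ref{df:sum} forces $x\neq y$, and that integrality and nonnegativity of the fat-neighbor counts make vanishing of the inner product equivalent to the ``at most one'' and ``iff adjacent'' clauses --- are exactly the right points of care.
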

\begin{proof}
This
follows easily from the definitions of $\Ho = \Ho^1 \uplus \Ho^2$
	and a reduced representation of norm $m$.
%(ii) Let $\phi$ be a representation of norm $m$ obtained from $\psi$.
%Then it is easily checked that for $x\in V_s(\Ho^1)$ and 
%$y \in V_s(\Ho^2)$ one has
%$$
%(\phi(x), \phi(y)) = \begin{cases}
%1 & \text{if $x$ and $y$ have exactly one common fat neighbor,}\\
%0 & \text{if $x$ and $y$ have no common fat neighbor}
%\end{cases}.
%$$
%In the first case $x$ and $y$ are adjacent and in the second case 
%there are non-adjacent.
\end{proof}

\begin{lemma}\label{lem:sumeigen}
If $\Ho=\Ho^1\uplus \Ho^2$, then
\[
\lambda_{\min}(\Ho)=\min\{\lambda_{\min}(\Ho^1),
\lambda_{\min}(\Ho^2)\}.
\]
\end{lemma}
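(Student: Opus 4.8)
The plan is to prove the two inequalities $\lambda_{\min}(\Ho)\le\min\{\lambda_{\min}(\Ho^1),\lambda_{\min}(\Ho^2)\}$ and $\lambda_{\min}(\Ho)\ge\min\{\lambda_{\min}(\Ho^1),\lambda_{\min}(\Ho^2)\}$ separately, using the representation-theoretic characterization of the smallest eigenvalue from Theorem~\ref{thm:WN1} together with the orthogonality statement in Lemma~\ref{lem:decomp}. The first inequality is immediate: since $\Ho^1$ and $\Ho^2$ are induced Hoffman subgraphs of $\Ho$, Lemma~\ref{lem:eig} gives $\lambda_{\min}(\Ho^i)\ge\lambda_{\min}(\Ho)$ for $i=1,2$, and hence $\min\{\lambda_{\min}(\Ho^1),\lambda_{\min}(\Ho^2)\}\ge\lambda_{\min}(\Ho)$. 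So the content of the lemma is the reverse inequality.

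For the reverse inequality, set $m:=-\min\{\lambda_{\min}(\Ho^1),\lambda_{\min}(\Ho^2)\}$, so that $\lambda_{\min}(\Ho^i)\ge -m$ for both $i$. By Theorem~\ref{thm:WN1} each $\Ho^i$ admits a reduced representation $\psi^i:V_s(\Ho^i)\to\R^{n_i}$ of norm $m$. The idea is to glue these two representations into a single reduced representation of $\Ho$ of norm $m$ by placing them in orthogonal subspaces: work in $\R^{n_1}\oplus\R^{n_2}$ and define $\psi$ on $V_s(\Ho)=V_s(\Ho^1)\sqcup V_s(\Ho^2)$ by $\psi(x)=(\psi^1(x),0)$ for $x\in V_s(\Ho^1)$ and $\psi(x)=(0,\psi^2(x))$ for $x\in V_s(\Ho^2)$. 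Once I check that $\psi$ really is a reduced representation of $\Ho$ of norm $m$, Theorem~\ref{thm:WN1} yields $\lambda_{\min}(\Ho)\ge -m=\min\{\lambda_{\min}(\Ho^1),\lambda_{\min}(\Ho^2)\}$, completing the proof.

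The verification that $\psi$ is a reduced representation splits according to whether the two arguments lie in the same summand or in different summands. When $x,y$ belong to the same $V_s(\Ho^i)$, the required inner-product values follow directly from the fact that $\psi^i$ is already a reduced representation of $\Ho^i$, provided one knows that the fat-neighbor counts $|N^f_{\Ho}(x)|$ and $|N^f_{\Ho}(x,y)|$ computed inside $\Ho$ agree with those computed inside $\Ho^i$; this is exactly what condition~(iii) of Definition~\ref{df:sum} guarantees, since any fat neighbor of a slim vertex of $\Ho^i$ must itself lie in $\Ho^i$. When $x\in V_s(\Ho^1)$ and $y\in V_s(\Ho^2)$ lie in different summands, the construction forces $(\psi(x),\psi(y))=0$ because the two images are orthogonal; by Lemma~\ref{lem:decomp} this is precisely the inner-product value demanded of a reduced representation of the sum $\Ho=\Ho^1\uplus\Ho^2$, so the cross terms are consistent as well.

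I expect the main obstacle to be the bookkeeping in the same-summand case, namely confirming that condition~(iii) of Definition~\ref{df:sum} indeed makes the local fat-neighbor data of $\Ho^i$ coincide with that of $\Ho$. Concretely one must argue that $N^f_{\Ho}(x)=N^f_{\Ho^i}(x)$ and $N^f_{\Ho}(x,y)=N^f_{\Ho^i}(x,y)$ for slim vertices of a single part, which is where the structure of the sum is genuinely used. The orthogonality half is comparatively routine once Lemma~\ref{lem:decomp} is invoked, so the essential work is in checking that the diagonal and adjacent-pair entries of the Gram matrix of $\psi$ match the prescribed norm-$m$ values.
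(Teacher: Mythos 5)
Your proposal is correct and follows the paper's proof essentially verbatim: Lemma~\ref{lem:eig} for the easy inequality, then gluing the two norm-$m$ reduced representations orthogonally in $\R^{n_1}\oplus\R^{n_2}$ and invoking Theorem~\ref{thm:WN1}, with your verification filling in exactly the details the paper dismisses as ``easy to check.'' One small quibble: citing Lemma~\ref{lem:decomp} for the cross terms is slightly off-label, since that lemma presupposes $\psi$ is already a reduced representation of $\Ho$; the clean justification is Definition~\ref{df:sum}(iv) directly, which forces $|N^f_{\Ho}(x,y)|=1$ when $x,y$ in different parts are adjacent and $0$ otherwise, so the prescribed inner product is $0$ in both cases.
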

\begin{proof}
Let $m = - \min\{ \lambda_{\min}(\Ho^1),\lambda_{\min}(\Ho^2)\}$. 
In view of Lemma~\ref{lem:eig} we only need to show that 
$\lambda_{\min}(\Ho)\geq -m$.
By Theorem~\ref{thm:WN1}, 
$\Ho^i$ has a reduced representation $\psi^i:V(\Ho^i)\to\R^{n_i}$
of norm $m$, for each $i=1,2$.
Define $\psi:V(\Ho)\to\R^{n_1}\oplus\R^{n_2}$ by
$\psi(x)=\psi^1(x)\oplus0$ if $x\in V(\Ho^1)$,
$\psi(x)=0\oplus\psi^2(x)$ otherwise.
%	and $\Ho^2$ has a reduced representation of norm $m$.
%We may assume that $(\psi^1(x), \psi^2(y)) = 0$ for $x \in V_s(\Ho^1)$
% and $y \in V_s(\Ho^2)$.
It is easy to check that 
$\psi$ is a reduced representation of norm $m$,
and the result then follows from Theorem~\ref{thm:WN1}.
%the representation $\psi$ defined by $\psi(x) = \psi^1(x)$ for $x \in V_s(\Ho^1)$
%%%	and $\psi(y) = \si^2(y)$
%	and $\psi(y) = \psi^2(y)$
%	for $y \in\Ho^2$ is a reduced representation of norm $m$ for $\Ho$. 
\end{proof}

\subsection{Hoffman's limit theorem}
In this subsection, we state and prove
Hoffman's limit theorem (Theorem~\ref{lim2}) using
the concept of Hoffman graphs.

\begin{lemma}\label{lim1}
Let $\Go$ be a Hoffman graph whose vertex set is partitioned
as $V_1\cup V_2\cup V_3$ such that
\begin{enumerate}
\item $V_2\cup V_3\subset V_s(\Go)$,
\item there are no edges between $V_1$ and $V_3$,
\item every pair of vertices $x\in V_2$ and $y\in V_3$ are adjacent,
\item $V_3$ is a clique.
\end{enumerate}
Let $\Ho$ be the Hoffman graph with the set of vertices
$V_1\cup V_2$ together with a fat vertex $f\notin V(\Go)$ adjacent to all the
vertices of $V_2$. If $\Go$ has a representation of norm $m$,
then $\Ho$ has a representation of norm
\[
m+\frac{(m-1)|V_2|}{|V_3|+m-1}.
\]
\end{lemma}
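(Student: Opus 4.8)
The plan is to start from a representation $\phi\colon V(\Go)\to\R^n$ of norm $m$ (which exists by hypothesis) and to build from it an explicit representation $\phi'$ of $\Ho$ of norm $m':=m+\frac{(m-1)|V_2|}{|V_3|+m-1}$, thereby exhibiting the desired representation. The guiding idea is that the clique $V_3$, which is completely joined to $V_2$ and completely disjoint from $V_1$, should collapse into the single new fat vertex $f$. Accordingly I would first set $s:=\sum_{y\in V_3}\phi(y)$ and record its three relevant inner products: since $V_3$ is a clique of slim vertices, $(s,s)=|V_3|\,(|V_3|+m-1)$; by condition (iii), $(s,\phi(x))=|V_3|$ for every $x\in V_2$; and by condition (ii), $(s,\phi(x))=0$ for every $x\in V_1$ (slim or fat). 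Writing $\hat s:=s/\sqrt{(s,s)}$, I would declare $\phi'(f):=\hat s$, a unit vector as required of a fat vertex, automatically orthogonal to the images of $V_1$.

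On $V_1$ the construction is cosmetic: I would keep $\phi'(g)=\phi(g)$ for every fat vertex $g$ of $\Go$ (its norm must stay $1$), and set $\phi'(x)=\phi(x)+\sqrt{m'-m}\,u_x$ for every slim $x\in V_1$, where the $u_x$ are fresh pairwise orthogonal unit vectors, orthogonal to everything introduced so far. This raises the norm to $m'$ while leaving all mutual inner products inside $V_1$, and all inner products between $V_1$ and $f$, untouched.

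The real work is on $V_2$. Here I would try $\phi'(x)=\phi(x)+\delta\hat s+v_x$, with each $v_x$ in a further batch of fresh dimensions (so $v_x\perp\hat s$ and $v_x\perp\phi(V(\Go))$). Put $\gamma:=(\phi(x),\hat s)=\sqrt{|V_3|/(|V_3|+m-1)}$, which is the same for all $x\in V_2$. Then $(\phi'(x),\phi'(f))=\gamma+\delta$, so I am forced to take $\delta:=1-\gamma$ in order to make $x$ and $f$ ``adjacent''. The subtlety — and the main obstacle — is that the term $\delta\hat s$ is added to every vertex of $V_2$ alike, so it corrupts the mutual inner products within $V_2$: a short computation gives $(\phi'(x),\phi'(x'))=(\phi(x),\phi(x'))+(2\delta\gamma+\delta^2)+(v_x,v_{x'})$ for $x\neq x'$, whereas to preserve the slim adjacency structure of $\Ho$ I need this to equal $(\phi(x),\phi(x'))$. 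Thus the correction vectors $v_x$ cannot be dropped; requiring diagonal norm $m'$ as well forces their Gram matrix to be
\[
G=(m'-m)\,I_{|V_2|}-(2\delta\gamma+\delta^2)\,J_{|V_2|},
\]
and the whole lemma reduces to checking that $G$ is positive semidefinite, for only then do the $v_x$ exist.

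This final check is where the exact value of $m'$ enters, and I expect it to be the crux. Since $\delta=1-\gamma$, one simplifies $2\delta\gamma+\delta^2=1-\gamma^2=(m-1)/(|V_3|+m-1)$, so $G=qJ+(m'-m)I$ with $q=-(m-1)/(|V_3|+m-1)$. Its eigenvalues are $m'-m\geq0$ (on the orthogonal complement of the all-ones vector) and $(m'-m)+q|V_2|$ (on the all-ones vector); substituting the definition of $m'$ makes the latter vanish identically, so $G\succeq0$ with a one-dimensional kernel, and suitable $v_x$ exist. Assembling the pieces, I would then verify directly from Definition~\ref{df:rep} that $\phi'$ realizes every prescribed inner product — diagonal $m'$ on slim vertices and $1$ on $f$, value $1$ on edges (including each $x\in V_2$ with $f$), and $0$ otherwise — so that $\phi'$ is a representation of $\Ho$ of norm $m'$, as claimed.
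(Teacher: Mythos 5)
Your construction is correct and is essentially the paper's own proof: your $\hat s$, $\delta=1-\gamma$, and correction Gram matrix $G=(m'-m)I-(1-\gamma^2)J$ are exactly the paper's $\bu$, $\epsilon_1$, and $\epsilon_2^2(|V_2|I-J)$, the only cosmetic difference being that the paper realizes the correction vectors explicitly as the rows of $\epsilon_2 B$, where $B$ satisfies $BB^T=|V_2|I-J$, rather than invoking positive semidefiniteness abstractly as you do. If anything you are slightly more careful than the paper, whose matrix $D$ formally appends the fresh block $\epsilon_2\sqrt{|V_2|}\,I$ to every row of $P_1$, fat vertices included (which would inflate their norms to $1+\epsilon_2^2|V_2|$), whereas you correctly add the fresh coordinates only to the slim vertices of $V_1$ and leave the fat ones untouched.
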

\begin{proof}
Let $\phi: V(\Go)\to\R^d$ be a representation of norm $m$, and
let 
\[
P=\begin{pmatrix} P_1\\ P_2\\ P_3\end{pmatrix}
\]
be the $|V(\Go)|\times d$ matrix whose rows are the images of
$V(\Go)=V_1\cup V_2\cup V_3$ under $\phi$. Set
\begin{align*}
\bu&=\frac{1}{\sqrt{|V_3|(|V_3|+m-1)}}\sum_{x\in V_3}\phi(x),\\
\epsilon_1&=1-\sqrt{\frac{|V_3|}{|V_3|+m-1}},\\
\epsilon_2&=\sqrt{\frac{m-1}{|V_3|+m-1}}.
\end{align*}
Let $\bj$ denote the row vector of length $|V_2|$ all of whose
entries are $1$.
Then
\begin{align}
\bu\bu^T&=1,\label{e1}\\
P_1\bu^T&=0,\\
P_2\bu^T&=(1-\epsilon_1)\bj^T,\\
\epsilon_2^2&=2\epsilon_1-\epsilon_1^2.
\end{align}
Fix an orientation of the complete digraph on $V_2$, and let
$B$ be the $|V_2|\times\binom{|V_2|}{2}$ matrix defined by
\[
B_{\alpha,(\beta,\gamma)}=\delta_{\alpha\beta}-\delta_{\alpha\gamma}\quad
(\alpha,\beta,\gamma\in V_2,\;\beta\neq\gamma).
\]
Then
\begin{equation}\label{e5}
BB^T=|V_2|I-J.
\end{equation}
We now construct the desired representation of $\Ho$, as the row
vectors of the matrix
\[
D=
\begin{pmatrix}
P_1 &\epsilon_2\sqrt{|V_2|}I&0\\
P_2+\epsilon_1\bj^T\bu&0&\epsilon_2B\\
\bu&0&0
\end{pmatrix}.
\]
Then, using (\ref{e1})--(\ref{e5}), we find
\begin{align*}
DD^T&=
\begin{pmatrix}
P_1 &\epsilon_2\sqrt{|V_2|}I&0\\
P_2+\epsilon_1\bj^T\bu&0&\epsilon_2B\\
\bu&0&0
\end{pmatrix}
\begin{pmatrix}
P_1^T & P_2^T+\epsilon_1\bu^T\bj & \bu^T\\
\epsilon_2\sqrt{|V_2|}I&0&0\\
0&\epsilon_2B^T&0
\end{pmatrix}
\nexteq
\begin{pmatrix}
P_1P_1^T+\epsilon_2^2|V_2|I &
P_1P_2^T & 0\\
P_2P_1^T & P_2P_2^T+(2\epsilon_1-\epsilon_1^2)J+\epsilon_2^2(|V_2|I-J)&\bj^T\\
0&\bj&1
\end{pmatrix}
\nexteq
\begin{pmatrix}
P_1P_1^T+\epsilon_2^2|V_2|I &
P_1P_2^T & 0\\
P_2P_1^T & P_2P_2^T+\epsilon_2^2|V_2|I&\bj^T\\
0&\bj&1
\end{pmatrix}
\nexteq
\begin{pmatrix}
P_1P_1^T&
P_1P_2^T & 0\\
P_2P_1^T & P_2P_2^T&\bj^T\\
0&\bj&1
\end{pmatrix}
+\epsilon_2^2|V_2|
\begin{pmatrix}
I & 0&0\\
0&I&0\\
0&0&0
\end{pmatrix}
\end{align*}
Therefore, the row vectors of $D$ define a representation of
norm $m+\epsilon_2^2|V_2|$ of the Hoffman graph $\Ho$.
\end{proof}

\begin{theorem}[Hoffman]\label{lim2}
Let $\Ho$ be a Hoffman graph, and let $f_1,\dots,f_k\in V_f(\Ho)$. 
Let $\Go^{n_1,\dots,n_k}$
be the Hoffman graph obtained from $\Ho$ by replacing each $f_i$ by a
slim $n_i$-clique $K^i$, and joining all the neighbors of
$f_i$ with all the vertices of $K^i$ by edges.
Then
\begin{align}
\lambda_{\min}(\Go^{n_1,\dots,n_k})&\geq\lambda_{\min}(\Ho),
\label{eq:lim2ak}\\
\intertext{and}
\lim_{n_1,\dots,n_k\to\infty}\lambda_{\min}(\Go^{n_1,\dots,n_k})
&=\lambda_{\min}(\Ho).
\label{eq:lim2bk}
\end{align}
%Let $\Ho$ be a Hoffman graph, and let $f\in V_f(\Ho)$. Let $\Go^n$
%be the Hoffman graph obtained from $\Ho$ by replacing $f$ by a
%slim $n$-clique $K$, and joining all the neighbors of
%$f$ with all the vertices of $K$ by edges.
%Then
%\begin{align}
%\lambda_{\min}(\Go^n)&\geq\lambda_{\min}(\Ho),\label{eq:lim2a}\\
%\intertext{and}
%\lim_{n\to\infty}\lambda_{\min}(\Go^n)&=\lambda_{\min}(\Ho).
%\label{eq:lim2b}
%\end{align}
\end{theorem}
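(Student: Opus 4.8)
The plan is to prove the two assertions separately: the inequality (\ref{eq:lim2ak}) by a direct construction of a representation, and the limit (\ref{eq:lim2bk}) by iterating Lemma~\ref{lim1}.

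First I would establish (\ref{eq:lim2ak}). Put $m=-\lambda_{\min}(\Ho)$, and note $m\ge 1$: since $f_1$ has a slim neighbor $s$, the diagonal entry $B(\Ho)_{ss}=-|N^f(s)|\le -1$, so by the Rayleigh quotient $\lambda_{\min}(\Ho)\le B(\Ho)_{ss}\le -1$. By Theorem~\ref{thm:WN1}, $\Ho$ admits a representation $\phi$ of norm $m$; write $e_i=\phi(f_i)$, a unit vector with $(e_i,e_j)=\delta_{ij}$ and $(e_i,\phi(x))$ equal to $1$ or $0$ according as $x$ is or is not adjacent to $f_i$. I would then define a map on $V(\Go^{n_1,\dots,n_k})$ by keeping $\phi$ on every vertex of $\Ho$ other than $f_1,\dots,f_k$, and sending the $t$-th vertex of the clique $K^i$ to $e_i+g^{(i)}_t$, where the $g^{(i)}_t$ lie in a fresh orthogonal summand, are mutually orthogonal, each of norm $m-1$, and orthogonal to the image of $\phi$. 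Checking the four cases of Definition~\ref{df:rep} is then routine: the norm condition uses $\|g^{(i)}_t\|^2=m-1$, the clique condition uses $(g^{(i)}_t,g^{(i)}_s)=0$, and every adjacency to and among the original vertices is inherited from $(e_i,\cdot)$. Thus $\Go^{n_1,\dots,n_k}$ has a representation of norm $m$, and Theorem~\ref{thm:WN1} gives $\lambda_{\min}(\Go^{n_1,\dots,n_k})\ge -m=\lambda_{\min}(\Ho)$.

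For the limit, write $m_{\vec n}=-\lambda_{\min}(\Go^{n_1,\dots,n_k})$, so that $1\le m_{\vec n}\le m$ (the lower bound because $\Go^{n_1,\dots,n_k}$ contains an edge, the upper bound by the inequality just proved), and start from a representation of $\Go^{n_1,\dots,n_k}$ of norm $m_{\vec n}$ furnished by Theorem~\ref{thm:WN1}. The key observation is that Lemma~\ref{lim1} exactly reverses the clique-expansion at a single fat vertex: taking $V_3=K^i$, $V_2=N_{\Ho}(f_i)$ and $V_1$ the remaining vertices, hypotheses (i)--(iv) of Lemma~\ref{lim1} hold, and the Hoffman graph it produces is precisely the one obtained from $\Go^{n_1,\dots,n_k}$ by contracting $K^i$ back to the fat vertex $f_i$. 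Applying this once for each $i=1,\dots,k$ in turn, I obtain a representation of $\Ho$ whose norm is $m_{\vec n}$ augmented, at the $i$-th step, by a term $\frac{(m'-1)|N_{\Ho}(f_i)|}{n_i+m'-1}$, where $m'\ge 1$ is the (bounded) norm reached so far.

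The remaining step is the error analysis, which is where the care is needed. Since all intermediate norms start at $m_{\vec n}\le m$ and increase only by finitely many terms that are each $O(1/n_i)$, they stay below, say, $m+1$ once every $n_i$ is large; hence each correction is at most $\frac{m\,|N_{\Ho}(f_i)|}{n_i}\to 0$. Thus $\Ho$ has a representation of norm $m_{\vec n}+o(1)$ as $n_1,\dots,n_k\to\infty$, so $m=-\lambda_{\min}(\Ho)\le m_{\vec n}+o(1)$ by Theorem~\ref{thm:WN1}; combined with $m_{\vec n}\le m$ this forces $m_{\vec n}\to m$, which is (\ref{eq:lim2bk}). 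I expect the main obstacle to be exactly this bookkeeping for the iterated use of Lemma~\ref{lim1}: one must check that restoring one fat vertex does not disturb the hypotheses needed to restore the next (it does not, since the fat vertices of $\Ho$ are pairwise non-adjacent, so distinct cliques $K^i$ and neighbourhoods $N_{\Ho}(f_i)$ interact only through original slim vertices), and that the growing intermediate norms remain uniformly bounded so that every correction term vanishes in the limit.
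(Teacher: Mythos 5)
Your proof is correct, but it is organized differently from the paper's, and the comparison is instructive. For the inequality (\ref{eq:lim2ak}) the paper does not build a representation by hand: it attaches the clique $K$ \emph{without} deleting the fat vertex $f_1$, observes that the resulting graph $\Ho^n$ decomposes as $\Ho^n=\Ho\uplus\Ho'$ with $\Ho'$ the clique-plus-fat-vertex graph of smallest eigenvalue $-1$, and concludes $\lambda_{\min}(\Go^n)\geq\lambda_{\min}(\Ho^n)=\lambda_{\min}(\Ho)$ from Lemmas~\ref{lem:sumeigen} and \ref{lem:eig}; your explicit map sending the $t$-th vertex of $K^i$ to $\phi(f_i)+g^{(i)}_t$ with $\|g^{(i)}_t\|^2=m-1$ is in effect the representation that this decomposition produces, so the content is the same, but your version handles all $k$ cliques in one stroke (and you correctly supply the needed hypothesis $m\geq1$, which the paper leaves implicit). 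For the limit (\ref{eq:lim2bk}) both arguments rest on Lemma~\ref{lim1} as the engine for the lower bound, but the paper proves the case $k=1$ first and then runs an induction on $k$, restoring one fat vertex of $\Ho$ at a time and splicing the two limits together with an explicit $\epsilon$--$N_0$--$N_1$ argument; you instead iterate Lemma~\ref{lim1} $k$ times starting from a norm-$m_{\vec n}$ representation of $\Go^{n_1,\dots,n_k}$ and control all $k$ correction terms uniformly. Your route requires exactly the two checks you identify -- that contracting one clique does not disturb the partition hypotheses (i)--(iv) for the next contraction (true because fat vertices are pairwise non-adjacent and each $N_{\Ho}(f_i)$ consists of original slim vertices), and that the intermediate norms stay uniformly bounded, which follows by the small induction you sketch ($m_0=m_{\vec n}\leq m$ and each correction at most $m\max_i|N_{\Ho}(f_i)|/n_i$, so all intermediate norms stay below $m+1$ once every $n_i$ is large). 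What each approach buys: the paper's induction lets it quote the $k=1$ case verbatim and keeps each application of Lemma~\ref{lim1} in its original single-fat-vertex form, at the cost of the $\epsilon$-bookkeeping across induction steps; your one-pass iteration avoids the induction entirely and yields the joint limit in $n_1,\dots,n_k$ directly from $m-o(1)\leq m_{\vec n}\leq m$, at the cost of the uniform-boundedness verification. Both are complete proofs; yours is arguably the cleaner packaging of the same underlying mechanism.
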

\begin{proof}
We prove the assertions by induction on $k$. First suppose $k=1$.
Let $\mu_n=-\lambda_{\min}(\Go^n)$. Let
$\Ho^n$ denote the Hoffman graph obtained 
from $\Ho$ by attaching a slim $n$-clique $K$ to the fat vertex $f_1$,
joining all the neighbors of $f_1$ and all the vertices of $K$
by edges. Then $\Ho^n$ contains both $\Ho$ and $\Go^n$ as subgraphs,
and $\Ho^n=\Ho\uplus\Ho'$, where $\Ho'$ is the subgraph induced
on $K\cup\{f_1\}$. Since $\lambda_{\min}(\Ho')=-1$, 
Lemma~\ref{lem:sumeigen} implies
\[
\lambda_{\min}(\Ho)=\lambda_{\min}(\Ho^n)\leq
\lambda_{\min}(\Go^n)=-\mu_n.
\]
Thus (\ref{eq:lim2ak}) holds for $k=1$.
Since $n$ is arbitrary and $\{-\mu_n\}_{n=1}^\infty$ is
decreasing, we see that $\lim_{n\to\infty}\mu_n$ exists and
\begin{equation}\label{lim2b}
\lambda_{\min}(\Ho)\leq-\lim_{n\to\infty}\mu_n.
\end{equation}

Since
$\Go^n$ has a representation of norm $\mu_n$,
it follows from Lemma~\ref{lim1} that $\Ho$ has a representation
of norm 
\[
\mu_n+\frac{(\mu_n-1)|N_{\Ho}(f)|}{n+\mu_n-1}.
\]
%By Theorem (Woo-Neumaier),
By Theorem~\ref{thm:WN1},
	we have
\[
\lambda_{\min}(\Ho)
\geq
-\mu_n-\frac{(\mu_n-1)|N_{\Ho}(f)|}{n+\mu_n-1},
\]
which implies 
\begin{equation}\label{lim2a}
\lambda_{\min}(\Ho)\geq-\lim_{n\to\infty}\mu_n.
\end{equation}
%On the other hand, let 
Combining (\ref{lim2a}) with (\ref{lim2b}), we conclude that
(\ref{eq:lim2bk}) holds for $k=1$. 

Next, suppose $k\geq2$. 
Let $\Go^{n_1,\dots,n_{k-1}}$
be the Hoffman graph obtained from $\Ho$ by replacing each $f_i$ 
($1\leq i\leq k-1$) by a
slim $n_i$-clique $K^i$, and joining all the neighbors of
$f_i$ with all the vertices of $K^i$ by edges.
Then $\Go^{n_1,\dots,n_{k}}$ is obtained from $\Go^{n_1,\dots,n_{k-1}}$
%by replacing $f_k$ by a slim $n_k$-clique $K^k$, and 
by replacing $f_k$ by a slim $n_k$-clique $K^k$, %2012-05-20 by T.T.
and joining all the neighbors of
$f_k$ with all the vertices of $K^k$ by edges.
Then it follows from the assertions for $k=1$ that
\begin{align}
\lambda_{\min}(\Go^{n_1,\dots,n_k})&\geq\lambda_{\min}
(\Go^{n_1,\dots,n_{k-1}}),
\label{eq:lim2ak1}\\
\intertext{and}
\lim_{n_k\to\infty}\lambda_{\min}(\Go^{n_1,\dots,n_k})
&=\lambda_{\min}(\Go^{n_1,\dots,n_{k-1}}).
\notag
%\label{eq:lim2bk1}
\end{align}
This means that, for any $\epsilon>0$, there exists
$N_1$ such that
\[
n_k\geq N_1\implies 0\leq\lambda_{\min}(\Go^{n_1,\dots,n_k})
-\lambda_{\min}(\Go^{n_1,\dots,n_{k-1}})<\epsilon.
\]
By induction, we have
\begin{align}
\lambda_{\min}(\Go^{n_1,\dots,n_{k-1}})&\geq
\lambda_{\min}(\Ho),\label{eq:I1}\\
\intertext{and}
\lim_{n_1,\dots,n_{k-1}\to\infty}\lambda_{\min}
(\Go^{n_1,\dots,n_{k-1}})&=\lambda_{\min}(\Ho).
\label{eq:I2}
\end{align}
Combining (\ref{eq:lim2ak1}) with (\ref{eq:I1}), we obtain
(\ref{eq:lim2ak}), while (\ref{eq:I1}) and (\ref{eq:I2})
imply that there exists $N_0$ such that
\[
n_1,\dots,n_{k-1}\geq N_0\implies 
0\leq\lambda_{\min}(\Go^{n_1,\dots,n_{k-1}})
-\lambda_{\min}(\Ho)<\epsilon.
\]
Setting $N=\max\{N_0,N_1\}$, we see that
\[
n_1,\dots,n_{k}\geq N\implies 
0\leq\lambda_{\min}(\Go^{n_1,\dots,n_{k}})
-\lambda_{\min}(\Ho)<2\epsilon.
\]
This establishes (\ref{eq:lim2bk}).
\end{proof}

%\begin{lemma}\label{lim2k}
%\end{lemma}
%\begin{proof}
%\end{proof}

\begin{cor}\label{lim3}
Let $\Ho$ be a Hoffman graph. Let $\Gamma^n$
be the slim graph obtained from $\Ho$ by replacing 
every fat vertex $f$ of $\Ho$ by a
slim $n$-clique $K(f)$, and joining all the neighbors of
$f$ with all the vertices of $K(f)$ by edges.
Then 
\begin{align*}
\lambda_{\min}(\Gamma^n)&\geq\lambda_{\min}(\Ho),
%\label{eq:lim3a}
\\
\intertext{and}
\lim_{n\to\infty}\lambda_{\min}(\Gamma^n)&=\lambda_{\min}(\Ho).
%\label{eq:lim3b}
\end{align*}
In particular,
for any $\epsilon >0$,
there exists a natural number $n$ such that,
every slim graph $\Delta$ containing $\Gamma^n$ as an induced
subgraph satisfies
\[
\lambda_{\min}(\Delta)\leq\lambda_{\min}(\Ho)+\epsilon.
\]
\end{cor}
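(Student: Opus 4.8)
The plan is to read off the two displayed relations as the special case of Theorem~\ref{lim2} in which \emph{every} fat vertex is replaced, and then to extract the ``in particular'' clause by combining the resulting limit with the monotonicity of $\lambda_{\min}$ under induced subgraphs recorded in Lemma~\ref{lem:eig}.

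First I would identify $\Gamma^n$ with the graph $\Go^{n,\dots,n}$ of Theorem~\ref{lim2}. Taking $k=|V_f(\Ho)|$ and letting $f_1,\dots,f_k$ enumerate all the fat vertices of $\Ho$, the construction of $\Gamma^n$ is exactly that of $\Go^{n_1,\dots,n_k}$ with $n_1=\dots=n_k=n$ (if $\Ho$ has no fat vertex then $\Gamma^n=\Ho$ and the assertions are trivial). Hence the inequality $\lambda_{\min}(\Gamma^n)\geq\lambda_{\min}(\Ho)$ is precisely (\ref{eq:lim2ak}). For the limit, (\ref{eq:lim2bk}) asserts that for every $\epsilon>0$ there is an $N$ with $0\le\lambda_{\min}(\Go^{n_1,\dots,n_k})-\lambda_{\min}(\Ho)<\epsilon$ whenever $n_1,\dots,n_k\geq N$; restricting to the diagonal $n_1=\dots=n_k=n\geq N$ yields $\lim_{n\to\infty}\lambda_{\min}(\Gamma^n)=\lambda_{\min}(\Ho)$.

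For the final assertion, I would use the limit just established, together with $\lambda_{\min}(\Gamma^n)\geq\lambda_{\min}(\Ho)$, to choose, given $\epsilon>0$, a natural number $n$ for which
\[
\lambda_{\min}(\Ho)\leq\lambda_{\min}(\Gamma^n)\leq\lambda_{\min}(\Ho)+\epsilon.
\]
Now let $\Delta$ be any slim graph containing $\Gamma^n$ as an induced subgraph. Regarding both as Hoffman graphs without fat vertices, $\Gamma^n$ is an induced Hoffman subgraph of $\Delta$, so Lemma~\ref{lem:eig} gives $\lambda_{\min}(\Delta)\leq\lambda_{\min}(\Gamma^n)\leq\lambda_{\min}(\Ho)+\epsilon$, as required.

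This argument is essentially bookkeeping on top of Theorem~\ref{lim2}, and I do not expect a genuine obstacle. The one point that warrants explicit mention is the passage from the simultaneous limit (\ref{eq:lim2bk}) to the single-variable limit along the diagonal $n_1=\dots=n_k=n$; this is immediate from the uniform bound furnished by Theorem~\ref{lim2}, but should be stated rather than left implicit.
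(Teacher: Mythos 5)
Your proposal is correct and matches the paper's proof, which simply cites Theorem~\ref{lim2}: you instantiate it with all fat vertices and $n_1=\dots=n_k=n$, and handle the final clause via Lemma~\ref{lem:eig}, exactly as intended. Your explicit remark about passing from the simultaneous limit to the diagonal is a reasonable bit of care, but it is the same argument, just spelled out.
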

\begin{proof}
Immediate from Theorem~\ref{lim2}.
\end{proof}

%\section{Decompositions of Hoffman graphs}
\section{Special graphs of Hoffman graphs}

%\begin{defi}\label{def:line}
%Let $\h$ be a family of isomorphism classes of Hoffman graphs. 
%An $\h$-line graph $\Gamma$ is a slim subgraph of a graph $\Ho = 
%\biguplus_{i=1}^n \Ho^i$ such that $\Ho^i$ is a member of $\h$
%for all $i=1,\dots,n$.
%\end{defi}

\begin{defi}\label{def:max}
Let $\mu$ be a real number with $\mu\leq-1$
and let $\Ho$ be a Hoffman graph with smallest eigenvalue at least $\mu$.
Then $\Ho$ is called 
%\emph{$\mu$-fat-saturated}
\emph{\sat{\mu}}
if no fat vertex can be attached to
$\Ho$ in such a way that the resulting graph has 
smallest eigenvalue at least $\mu$.
%A $\mu$-fat-saturated
%indecomposable fat Hoffman graph $\Ho$ with smallest eigenvalue
%at least $\mu$ is called {\em $\mu$-maximal} if
%$\Ho$ is maximal among all indecomposable fat
%Hoffman graphs with smallest eigenvalue
%at least $\mu$.
%
%\begin{enumerate}
%\item $\Ho$ is indecomposable,
%%$\mu$-saturated and 
%\item $\Ho$ is $\mu$-fat-saturated,
%	$\mu$-indecomposable,
%and if $\Ho'$ is a $\mu$-indecomposable fat Hoffman graph
%\item 
%if $\Ho'$ is an indecomposable fat Hoffman graph
%with $\lambda_{\min}(\Ho')\geq\mu$
%containing $\Ho$ as a Hoffman subgraph,
%then $\Ho' = \Ho$.
%\end{enumerate}
\end{defi}

%In the definition of a $\mu$-maximal 
%Hoffman graph, we need to include $\mu$-fat-saturatedness.
%This is because the Hoffman graph $H_4$ in \cite{hlg} is
%an indecomposable fat Hoffman graph which is maximal among
%all indecomposable fat Hoffman graphs with smallest eigenvalue
%at least $-2$, but $H_4$ is not $(-2)$-fat-saturated. Adding
%a common fat neighbor to the two slim vertices of $H_4$,
%one can decompose it into two copies of $H_2$.

\begin{prop}\label{prop:line}
Let $\mu$ be a real number, and let $\h$ be a family of 
indecomposable fat Hoffman graphs with smallest eigenvalue
at least $\mu$.
The following statements are equivalent:
\begin{enumerate}
\item
every fat Hoffman graph with smallest eigenvalue at least $\mu$ 
is a subgraph of a graph $\Ho = 
\biguplus_{i=1}^n \Ho^i$ such that $\Ho^i$ is a member of $\h$
for all $i=1,\dots,n$.
%is an $\h$-line graph,
\item
%every $\mu$-maximal Hoffman graph belongs to the family $\h$.
every \sat{\mu}\ indecomposable fat Hoffman graph is isomorphic to
a subgraph of a member of $\h$.
\end{enumerate}
\end{prop}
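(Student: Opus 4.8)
The plan is to prove the two implications separately. A fact I would use in both directions: if $\Go=\biguplus_{i=1}^n\Go^i$ with each $\Go^i\in\h$, then Lemma~\ref{lem:sumeigen} gives $\lambda_{\min}(\Go)=\min_i\lambda_{\min}(\Go^i)\ge\mu$, so by Theorem~\ref{thm:WN1} $\Go$ has a reduced representation of norm $-\mu$, and by Lemma~\ref{lem:decomp} any such representation $\psi$ satisfies $(\psi(u),\psi(v))=0$ whenever $u,v$ are slim vertices in distinct summands.

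For (i)$\implies$(ii), let $\Ho$ be a \sat{\mu} indecomposable fat Hoffman graph; by (i) we may regard $\Ho$ as an induced subgraph of some $\Go=\biguplus_{i=1}^n\Go^i$ with $\Go^i\in\h$, and put $S_i=V_s(\Ho)\cap V_s(\Go^i)$. The key use of saturation is to force every bridging fat vertex into $\Ho$: if $x\in S_i$ and $y\in S_j$ with $i\ne j$ are adjacent, then axiom (iv) of Definition~\ref{df:sum} applied to $\Go$ gives them a unique common fat neighbour $f$ in $\Go$; were $f\notin V(\Ho)$, attaching $f$ (with its neighbours in $\Go$) would yield an induced subgraph of $\Go$ with smallest eigenvalue still $\ge\lambda_{\min}(\Go)\ge\mu$ by Lemma~\ref{lem:eig}, contradicting saturation. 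Taking $\psi$ to be a reduced representation of $\Ho$ of norm $-\mu$, a short case analysis of adjacent versus non-adjacent cross pairs (using that non-adjacent cross pairs already have no common fat neighbour in $\Go$) then gives $(\psi(x),\psi(y))=0$ for all $x\in S_i$, $y\in S_j$ with $i\ne j$. If two of the $S_i$ were non-empty, then with $A=S_1$, $B=\bigcup_{i\ge2}S_i$, Lemma~\ref{lem:decomp} applied to $\Ho^A=\subgg{A}{\Ho}$ and $\Ho^B=\subgg{B}{\Ho}$ would exhibit $\Ho=\Ho^A\uplus\Ho^B$ as a sum of two non-empty subgraphs, contradicting indecomposability. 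Hence all slim vertices of $\Ho$ lie in one $\Go^i$; since $\Ho$ is fat, every fat vertex of $\Ho$ has a slim neighbour, which by axiom (iii) lies in $\Go^i$, so that fat vertex does too, and $\Ho$ is an induced subgraph of $\Go^i\in\h$.

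For (ii)$\implies$(i), let $\Go_0$ be a fat Hoffman graph with $\lambda_{\min}(\Go_0)\ge\mu$. First I would \emph{saturate}, repeatedly attaching fat vertices while keeping the smallest eigenvalue $\ge\mu$. This keeps $V_s$ fixed, and a trace estimate bounds the number of fat vertices: with $N=|V_s|$ one has $N\mu\le\operatorname{tr}B(\Ho)=-\sum_{f}|N^s(f)|\le-|V_f|$, so $|V_f|\le N|\mu|$ throughout, whence the process terminates at a \sat{\mu} fat graph $\tilde{\Go}_0\supseteq\Go_0$. Using associativity of $\uplus$, I then decompose $\tilde{\Go}_0=\biguplus_{i=1}^n\Ho^i$ into indecomposable summands; each $\Ho^i$ is fat, and each is \sat{\mu}: if some fat vertex could be attached to $\Ho^i$ keeping $\lambda_{\min}\ge\mu$, then attaching it to $\tilde{\Go}_0$ produces $(\Ho^i+f)\uplus\biguplus_{j\ne i}\Ho^j$, whose smallest eigenvalue is $\ge\mu$ by Lemma~\ref{lem:sumeigen}, contradicting the saturation of $\tilde{\Go}_0$. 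By (ii) each $\Ho^i$ embeds as an induced subgraph of some $\Go^i\in\h$.

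It remains to reassemble, and this is the step I expect to be the main obstacle. One wants $\Go_0\subseteq\tilde{\Go}_0=\biguplus_i\Ho^i\subseteq\biguplus_i\Go^i$, where the last object is a \emph{genuine} sum of the members $\Go^i\in\h$. The natural construction takes the disjoint union of the $\Go^i$ and glues their fat vertices along the identifications dictated by the shared fat vertices of $\tilde{\Go}_0$, inserting the cross edges forced by axiom (iv). Verifying axioms (i)--(iii) of Definition~\ref{df:sum} is routine, but the ``at most one common fat neighbour'' half of axiom (iv) is delicate: the \emph{new} slim vertices of $\Go^i\setminus\Ho^i$ may become adjacent to two glued fat vertices, even though no cross pair of $\tilde{\Go}_0$ ever shared two. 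Controlling this—either by choosing the gluing so as to avoid such double-sharing, or by showing it cannot arise between indecomposable \sat{\mu} summands—is where the real work lies. Once $\biguplus_i\Go^i$ is shown to be a valid sum containing $\tilde{\Go}_0$ as an induced subgraph, the inclusion $\Go_0\subseteq\biguplus_i\Go^i$ establishes (i).
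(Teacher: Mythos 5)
Your direction (i)$\implies$(ii) is correct and is, in substance, the paper's own argument: the paper uses saturation to conclude $\Ho=\subgg{V_s(\Ho)}{\Ho'}$ and then invokes indecomposability (via Lemma~\ref{lm:s2}) to place $\Ho$ inside a single summand, while you reach the same conclusion by chasing the bridging fat vertices and computing $(\psi(x),\psi(y))=0$ for cross pairs, then applying Lemma~\ref{lem:decomp}; these are the same mechanism, yours written out in inner products. Your preparatory work for (ii)$\implies$(i) is also sound, and is in fact \emph{more} detailed than the paper: the trace bound $|V_f|\le -\mu\,|V_s|$ guaranteeing that saturation terminates, and the verification that each indecomposable summand of a \sat{\mu}\ graph is itself \sat{\mu}\ (checking that attaching $f$ to $\Ho^i$ yields a legitimate sum $(\Ho^i+f)\uplus\biguplus_{j\ne i}\Ho^j$ and applying Lemma~\ref{lem:sumeigen}) appear nowhere in the paper.

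The genuine gap is exactly the one you flag yourself: the reassembly. Your worry is concrete and correct. If a new slim vertex $x\in V_s(\Go^i)\setminus V_s(\Ho^i)$ is adjacent in $\Go^i$ to two fat vertices $f_1,f_2\in V_f(\Ho^i)$ that are both shared with $\Ho^j$, and some slim $y$ on the $j$-side is adjacent to both, then $x$ and $y$ have two common fat neighbours regardless of which cross edges you insert, so condition (iv) of Definition~\ref{df:sum} fails outright; saturation of the summands does not exclude this, since Definition~\ref{def:max} only constrains attaching \emph{fat} vertices, while $x$ is slim (saturation does show that new \emph{fat} vertices of $\Go^i$ have no neighbours in $V_s(\Ho^i)$, by Lemma~\ref{lem:eig}, but that is the wrong direction). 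Nor can you repair it by splitting a shared fat vertex into two copies: adjacent cross pairs of $\tilde{\Go}_0$ must retain exactly one common fat neighbour in the ambient sum, and supplying a fresh fat vertex adjacent to two old slim vertices would contradict the \sat{\mu}ness of $\tilde{\Go}_0$. You should know, however, that the paper does not fill this gap either: its entire proof of (ii)$\implies$(i) is the sentence ``without loss of generality we may assume that $\Ho$ is indecomposable and \sat{\mu}'', which compresses your saturation and decomposition steps \emph{and} silently subsumes the reassembly, followed by an application of (ii) with $n=1$. So you have not missed an idea the paper supplies; you have exposed a step the paper treats as routine. To complete the argument one would have to justify the paper's reduction directly, e.g.\ by showing that the members $\Go^i$ and the embeddings $\Ho^i\subseteq\Go^i$ can be chosen so that no new slim vertex is adjacent to two fat vertices that are shared in $\tilde{\Go}_0$, or that such a configuration is incompatible with the constraints; as it stands, your proof is incomplete at precisely the point where the paper's is silent.
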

\begin{proof}
First suppose (i) holds, and let $\Ho$ be a
% $\mu$-maximal 
\sat{\mu}\ indecomposable fat Hoffman 
graph. Then $\Ho$ is a fat Hoffman graph with smallest eigenvalue
at least $\mu$, hence $\Ho$ is 
%an $\h$-line graph. This implies that $\Ho$ is 
a subgraph of $\Ho'=\biguplus_{i=1}^n \Ho^i$, where $\Ho^i$ is
a member of $\h$ for $i=1,\dots,n$. Since $\Ho$ is 
%$\mu$-maximal,
%it is $\mu$-fat-saturated. This implies that $\Ho$ coincides with
\sat{\mu}, it coincides with
the subgraph $\subgg{V_s(\Ho)}{\Ho'}$ of $\Ho'$. Since $\Ho$ is 
indecomposable, this implies that $\Ho$ is a subgraph of $\Ho^i$ for some
$i$. 
%By maximality, we conclude $\Ho=\Ho^i$, hence $\Ho$ is a member
%of $\h$.

Next suppose (ii) holds, and let $\Ho$ be a fat Hoffman graph
with smallest eigenvalue at least $\mu$. Without loss of generality
%we may assume $\Ho$ is indecomposable and \sat{\mu}.
we may assume that $\Ho$ is indecomposable and \sat{\mu}. % 2012-05-20 by T.T.
Then $\Ho$ is isomorphic to a subgraph of a member of $\h$, hence 
%contained in some $\mu$-maximal Hoffman graph $\Ho'$ ???.
%Since $\Ho'$ is a member of $\h$, we conclude that 
%$\Ho$ is an $\h$-line graph.
(i) holds.
\end{proof}

\begin{defi}\label{df:special}
For a Hoffman graph $\Ho$,
	we define the following three graphs $\TS^-(\Ho)$,
	$\TS^+(\Ho)$ and $\TS(\Ho)$ as follows:
For $\epsilon \in \{-, +\}$ define
	the {\em special $\epsilon$-graph} $\TS^{\epsilon}=(V_s(\Ho), E^{\epsilon})$
	as follows:
%Two slim vertices $s_1, s_2$ are adjacent
the set of edges $E^{\epsilon}$ consists of pairs $\{s_1,s_2\}$ of distinct
slim vertices such that $\sgn(\psi(s_1), \psi(s_2)) = \epsilon$,
%The pair $s_1, s_2$of slim vertices  belongs to $ E^{\epsilon} $ %2012-05-20 by T.T.
%	if $\sgn(\psi(s_1), \psi(s_2)) = \epsilon$,
	where $\psi$ is a reduced representation of $\Ho$ of norm $m$.
The graph $\TS(\Ho):= \TS^+(\Ho) \cup \TS^-(\Ho)
	= (V_s(\Ho), E^{-} \cup E^{+})$
	is the {\em special graph} of $\Ho$.
%	 with respect to $\psi$.
%It is the same graph as defined
%\clr{
%in Definition ??.
%}
%As we have seen $\Ho$ is $(-3)$-indecomposable if and only if 
%	$\TS$ is connected.
\end{defi}
%Define the {\em special graph} $\TS(\Ho)$ having vertex set $V_s(\Ho)$
%	and distinct two slim vertices $x$ and $y$ are adjacent
%	if $(\psi(x), \psi(y)) \neq 0$.

Note that the definition of the special graph $\TS(\Ho)$ is independent of
%	the choice of a reduced representation $\psi$.
the choice of the norm $m$ of a reduced representation $\psi$.

It is easy to determine whether a Hoffman graph $\Ho$
is decomposable or not.

%For this we introduce the special graph.
%\begin{lemma}\label{lm:s2}
%Let $\Ho$ be a Hoffman graph.
%If $\Ho=\Ho^1\uplus \Ho^2$,
%	then 	$\TS^\pm (\Ho)$ is the disjoint union of
%	$\TS^\pm (\Ho^1)$ and $\TS^\pm (\Ho^2)$.
%\clr{If $\Ho=\Ho^1\uplus \Ho^2$,
%	then $\TS (\Ho)$ is the disjoint union of
%	$\TS(\Ho^1)$ and $\TS(\Ho^2)$.
%In particular,
%	$\TS^\pm (\Ho)$ is the disjoint union of
%	$\TS^\pm (\Ho^1)$ and $\TS^\pm (\Ho^2)$.
%}
%In particular,
% $\Ho$ is indecomposable if and only if $\TS(\Ho)$ is connected.
%\end{lemma}
%\begin{proof}
%The first part is clear
%	from Definition~\ref{df:sum}(iv) and Definition~\ref{df:special}.
%In particular, 
%if $\Ho$ is decomposable, then $\TS(\Ho)$ is disconnected.
%Conversely,
%	if $\TS (\Ho)$ is disconnected,
%		that is,
%		$V_s(\Ho)=V_1\cup V_2$
%		with no edges between vertices of $V_1$ and $V_2$,
%		then $\Ho=\subgg{V_1}{\Ho}\uplus\subgg{V_2}{\Ho}$.
%\end{proof}

%\clb{
\begin{lemma}\label{lm:s2}
Let $\Ho$ be a Hoffman graph.
Let $V_s(\Ho)=V_1\cup V_2$ be a partition,
and set $\Ho^i=\subgg{V_i}{\Ho}$ for $i=1,2$.
%Then the followings hold:
%\begin{enumerate}[(i)]
%\item $\Ho=\Ho^1\uplus \Ho^2$
%	if and only if
%	$\TS^\pm (\Ho)$ is the disjoint union of
%	$\TS^\pm (\Ho^1)$ and $\TS^\pm (\Ho^2)$,
%\item if $\TS (\Ho)$ is disconnected,
%%	that is,
%%	$V_s(\Ho) = V_1 \cup V_2$ with no edges between vertices of $V_1$ and $V_2$
%%	in $ \TS(\Ho) $,
%	then $\Ho = \subgg{V_1}{\Ho}\uplus\subgg{V_2}{\Ho}$.
% \end{enumerate}
Then $\Ho=\Ho^1\uplus \Ho^2$ if and only if
there are no edges connecting $V_1$ and $V_2$ in $\TS(\Ho)$.
In particular, $\Ho$ is indecomposable if and only if
$\TS(\Ho)$ is connected.
\end{lemma}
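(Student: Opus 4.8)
The plan is to reduce everything to Lemma~\ref{lem:decomp}, which already characterizes the sum in terms of orthogonality of a reduced representation $\psi$; the special graph $\TS(\Ho)$ is, by Definition~\ref{df:special}, nothing but the record of which pairs of slim vertices are non-orthogonal under $\psi$, so the two statements should match almost verbatim. Fix a norm $m$ for which a reduced representation $\psi$ of $\Ho$ exists (any $m\geq-\lambda_{\min}(\Ho)$ works by Theorem~\ref{thm:WN1}); since $\TS(\Ho)$ is independent of this choice, I may compute edges using this particular $\psi$. First I would record that, because $\subgg{V_i}{\Ho}$ is obtained from $V_i$ by adjoining only fat vertices, we have $V_s(\Ho^i)=V_i$; hence $\{V_s(\Ho^1),V_s(\Ho^2)\}=\{V_1,V_2\}$ is the given partition of $V_s(\Ho)$, which is condition (ii) of Definition~\ref{df:sum}.

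Next I would verify conditions (i) and (iii), so that Lemma~\ref{lem:decomp} applies to this pair $\Ho^1,\Ho^2$. Condition (iii) is immediate from the construction: if $x\in V_i$ and $y$ is a fat neighbor of $x$, then $y\in N^f_\Ho(x)\subseteq V(\Ho^i)$, so $y\in V_f(\Ho^i)$. For condition (i) the one input I would use is the defining axiom of a Hoffman graph, Definition~\ref{df:Hg}(i), that every fat vertex has a slim neighbor: this gives $V_f(\Ho)=\bigcup_{x\in V_s(\Ho)}N^f_\Ho(x)$, and therefore $V(\Ho^1)\cup V(\Ho^2)=(V_1\cup V_2)\cup\bigcup_{x\in V_s(\Ho)}N^f_\Ho(x)=V_s(\Ho)\cup V_f(\Ho)=V(\Ho)$.

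With (i)--(iii) in hand, Lemma~\ref{lem:decomp} says that $\Ho=\Ho^1\uplus\Ho^2$ holds if and only if $(\psi(x),\psi(y))=0$ for all $x\in V_1$ and $y\in V_2$. By Definition~\ref{df:special}, a pair $\{x,y\}$ is an edge of $\TS(\Ho)=\TS^+(\Ho)\cup\TS^-(\Ho)$ precisely when $\sgn(\psi(x),\psi(y))\neq0$, i.e.\ when $\psi(x)$ and $\psi(y)$ are non-orthogonal. Hence the orthogonality of every cross pair is exactly the assertion that no edge of $\TS(\Ho)$ joins $V_1$ to $V_2$, which proves the main equivalence.

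For the final assertion I would argue that decompositions of $\Ho$ correspond exactly to partitions of $V_s(\Ho)$ into two non-empty parts with no cross edge in $\TS(\Ho)$. The point requiring a little care --- and the only genuine obstacle --- is that an arbitrary decomposition $\Ho=\Ho^1\uplus\Ho^2$ must have $\Ho^i=\subgg{V_s(\Ho^i)}{\Ho}$: condition (iii) forces $V_f(\Ho^i)\supseteq\bigcup_{x\in V_s(\Ho^i)}N^f_\Ho(x)$, while the Hoffman-graph axiom applied to $\Ho^i$, namely that every fat vertex of $\Ho^i$ has a slim neighbor in $V_s(\Ho^i)$, forces the reverse inclusion, so $\Ho^i$ is determined by its slim vertex set. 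Granting this, $\Ho$ is decomposable if and only if $V_s(\Ho)$ admits such a partition, which is exactly the condition that $\TS(\Ho)$ be disconnected; equivalently, $\Ho$ is indecomposable if and only if $\TS(\Ho)$ is connected.
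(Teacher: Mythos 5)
Your proof is correct and takes essentially the same route as the paper, whose own proof simply declares the lemma immediate from Definition~\ref{df:sum}(iv) and Definition~\ref{df:special}: your appeal to Lemma~\ref{lem:decomp} is exactly that translation of condition (iv) into orthogonality of the reduced representation, which is the edge relation of $\TS(\Ho)$. The details you supply --- verifying conditions (i)--(iii) of Definition~\ref{df:sum} for $\subgg{V_i}{\Ho}$ via the axiom that every fat vertex has a slim neighbor, and showing that any summand satisfies $\Ho^i=\subgg{V_s(\Ho^i)}{\Ho}$ so that decompositions correspond to partitions of $V_s(\Ho)$ --- are correct fillings-in of what the paper leaves implicit.
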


\begin{proof}
This is immediate from Definition~\ref{df:sum}(iv) and Definition~\ref{df:special}.
%In particular, 
%if $\Ho$ is decomposable, then $\TS(\Ho)$ is disconnected.
\end{proof}
%}

%\begin{lemma}\label{lm:spec}
%Let $\Ho$ be a Hoffman graph.
%Then $\Ho$ is indecomposable if and only if $\TS(\Ho)$ is connected.
%\end{lemma}
%\begin{proof}
%If $\Ho$ is decomposable,
%	then $\TS (\Ho)$ is disconnected by Lemma~\ref{lm:s2}.
%Conversely,
%	if $\TS (\Ho)$ is disconnected,
%		that is,
%		$V_s(\Ho)=V_1\cup V_2$
%		with no edges between vertices of $V_1$ and $V_2$,
%		then $\Ho=\subgg{V_1}{\Ho}\uplus\subgg{V_2}{\Ho}$.
%\end{proof}

For an integer $t \geq 1$,
	let $\Ho^{(t)}$ be the fat Hoffman graph
	with one slim vertex and $t$ fat vertices. 

\begin{figure}[h]
\caption{}
\begin{center}
\begin{tabular}{ccccc}
\includegraphics[scale=0.15]{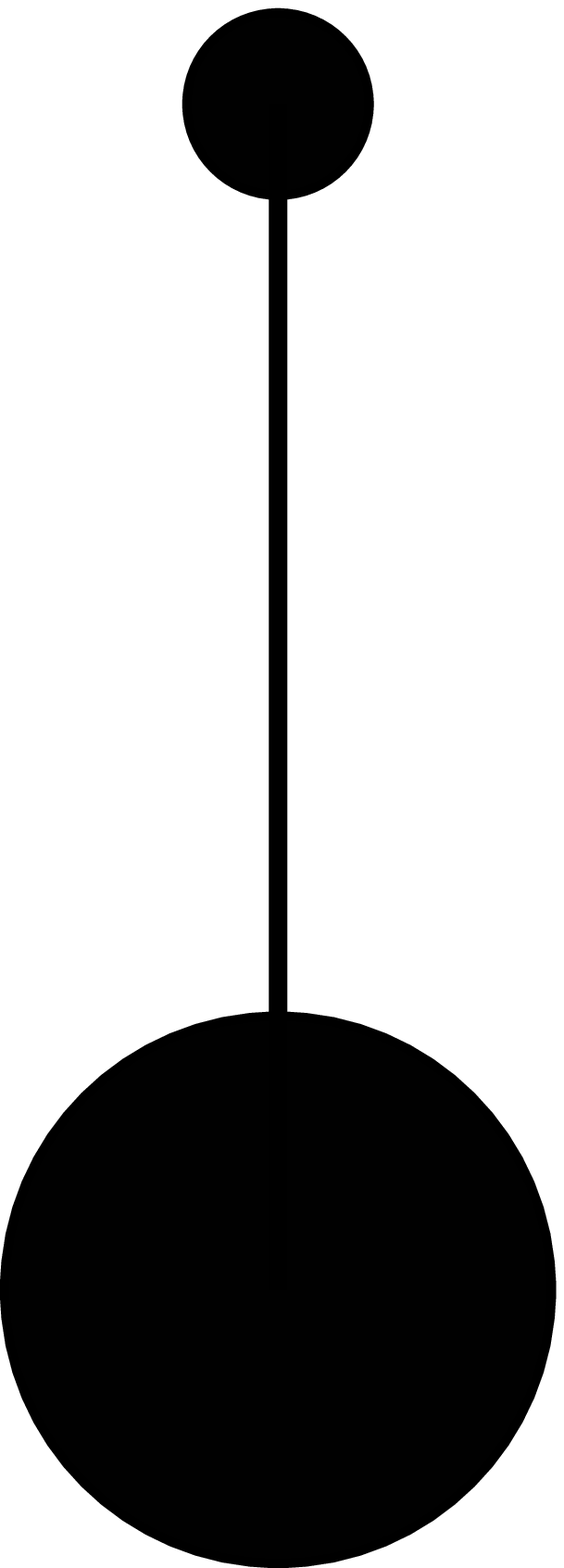} &&
\includegraphics[scale=0.15]{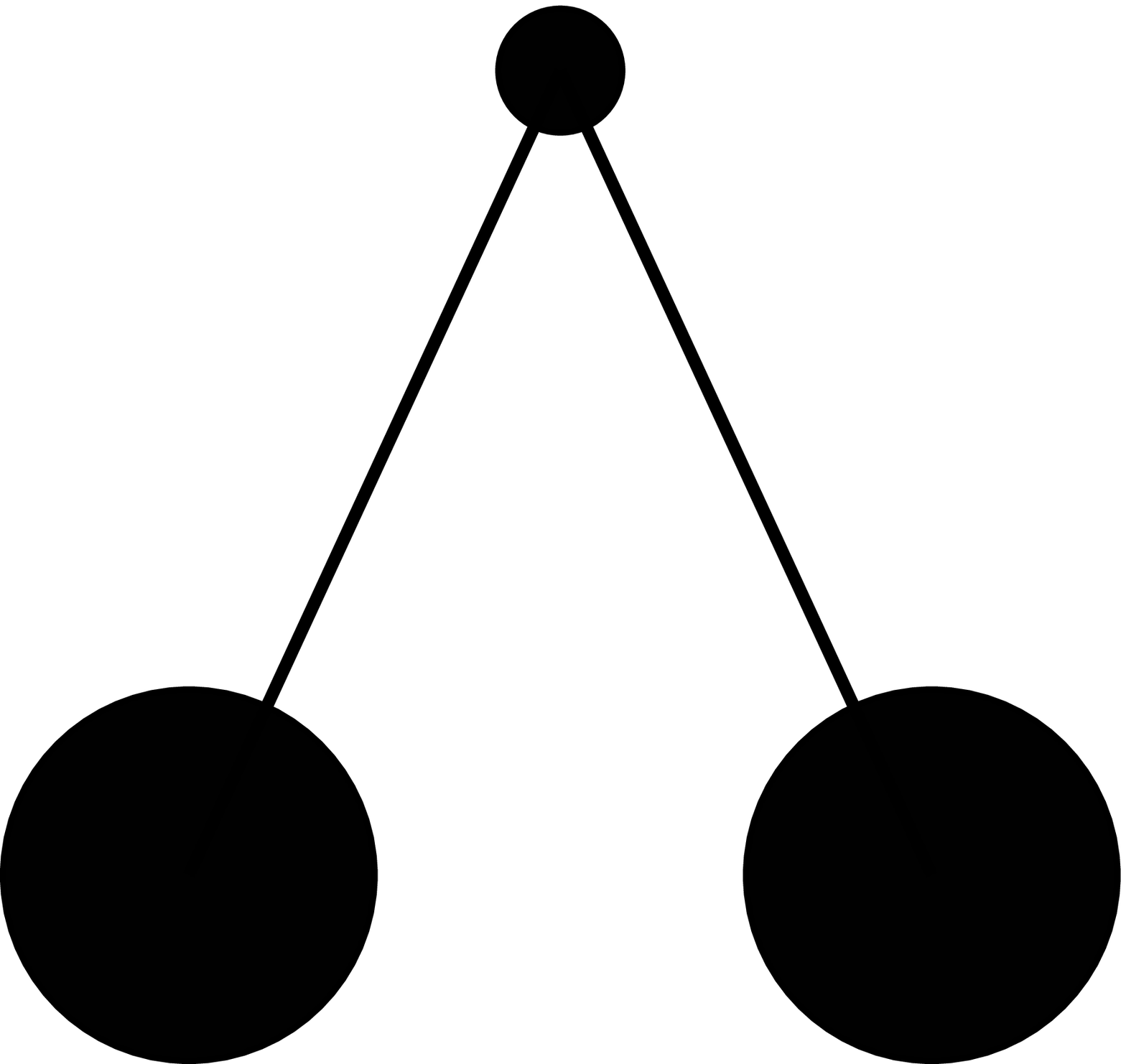} &&
\includegraphics[scale=0.15]{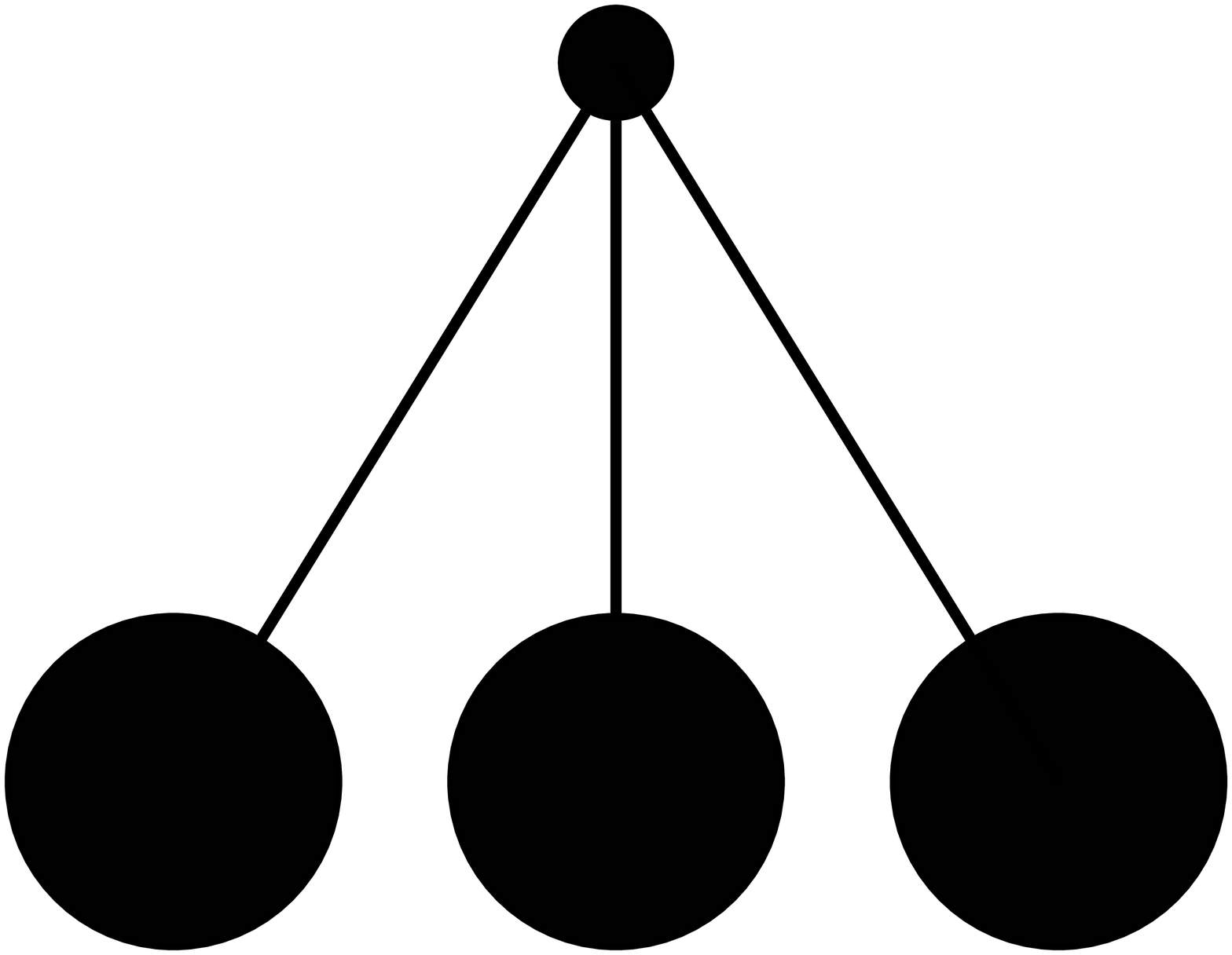}\\
$\Ho^{(1)},\lambda_{\min}=-1$ &&
$\Ho^{(2)},\lambda_{\min}=-2$ &&
$\Ho^{(3)},\lambda_{\min}=-3$\\
\end{tabular}
\end{center}
\label{Hoffmans0}
\end{figure}

%\clb{
\begin{lemma}\label{lem:sat}
Let $t$ be a positive integer.
%Then $\Ho^{(t)}$ is 
%$(-t)$-saturated. 
%$(-t)$-maximal. 
%If $\Ho$ is an indecomposable fat Hoffman graph
If $\Ho$ is a fat Hoffman graph
with $\lambda_{\min}(\Ho)\geq-t$
containing $\Ho^{(t)}$ as a Hoffman subgraph,
%then $\Ho = \Ho^{(t)}$.
then $\Ho=\Ho^{(t)}\uplus\Ho'$ for some subgraph $\Ho'$ of $\Ho$. In particular, if $\Ho$ is indecomposable, then $\Ho=\Ho^{(t)}$.
\end{lemma}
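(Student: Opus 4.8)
The plan is to exploit the fact that $\Ho^{(t)}$ has smallest eigenvalue exactly $-t$ (indeed its single $B$-eigenvalue is $0-t=-t$) together with the positive semidefiniteness of a reduced representation taken at the \emph{critical} norm $m=t$. First I would invoke Theorem~\ref{thm:WN1}: since $\lambda_{\min}(\Ho)\geq-t$, the Hoffman graph $\Ho$ admits a reduced representation $\psi$ of norm $m=t$. The reason for taking the norm to be exactly $t$, rather than some larger value, is that it will pin the image of the slim vertex of $\Ho^{(t)}$ to the origin.

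Next, let $x$ be the unique slim vertex of the fixed copy of $\Ho^{(t)}$ inside $\Ho$. Its $t$ fat vertices are fat neighbors of $x$ in $\Ho$, so $|N^f_\Ho(x)|\geq t$. By the defining property of a reduced representation of norm $t$ (Definition~\ref{df:redrep}), $(\psi(x),\psi(x))=t-|N^f_\Ho(x)|$; since a Gram vector has nonnegative norm, this forces $|N^f_\Ho(x)|=t$ and $\psi(x)=0$. In particular $x$ has no fat neighbors outside the copy of $\Ho^{(t)}$, so the subgraph $\subgg{\{x\}}{\Ho}$ is precisely $\Ho^{(t)}$.

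Finally I would partition $V_s(\Ho)=\{x\}\cup V_2$ with $V_2=V_s(\Ho)\setminus\{x\}$, and set $\Ho^1=\subgg{\{x\}}{\Ho}=\Ho^{(t)}$ and $\Ho^2=\subgg{V_2}{\Ho}=:\Ho'$. Because $\psi(x)=0$, we have $(\psi(x),\psi(y))=0$ for every slim vertex $y$, so there is no edge joining $x$ to $V_2$ in the special graph $\TS(\Ho)$. Lemma~\ref{lm:s2} then yields $\Ho=\Ho^{(t)}\uplus\Ho'$. If $\Ho$ is indecomposable, no such splitting with both parts non-empty can exist, which forces $V_2=\emptyset$ and hence $\Ho=\Ho^{(t)}$.

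The argument is short, and I do not anticipate a serious obstacle; the one genuine subtlety is the choice of norm. Working at $m=t$ is essential, since at any strictly larger norm the vector $\psi(x)$ need not vanish and the decomposition argument collapses. The existence of a representation at this critical norm is exactly what Theorem~\ref{thm:WN1} supplies from the hypothesis $\lambda_{\min}(\Ho)\geq-t$, and the rest is bookkeeping via the decomposition criterion of Lemma~\ref{lm:s2}.
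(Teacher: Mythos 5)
Your proof is correct and follows essentially the same route as the paper: take a reduced representation $\psi$ of norm $t$, observe $\psi(x)=0$ for the slim vertex $x$ of $\Ho^{(t)}$, so $x$ is isolated in $\TS(\Ho)$, and conclude via Lemma~\ref{lm:s2}. You additionally spell out why $\psi(x)=0$ (nonnegativity of $(\psi(x),\psi(x))=t-|N^f_\Ho(x)|$ forces $|N^f_\Ho(x)|=t$), a detail the paper leaves implicit.
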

%}

%\clr{
%\begin{lemma}\label{lem:sat}
%Let $t \geq 1$ be an integer.
%%Then $\Ho^{(t)}$ is 
%%$(-t)$-saturated. 
%%$(-t)$-maximal. 
%If $\Ho$ is an indecomposable fat Hoffman graph
%with $\lambda_{\min}(\Ho)\geq-t$
%containing $\Ho^{(t)}$ as a Hoffman subgraph,
%then $\Ho = \Ho^{(t)}$.
%\end{lemma}
%}
\begin{proof}
%The graph $\Ho^{(t)}$ is trivially indecomposable, and it is
%$(-t)$-fat-saturated since the unique slim vertex $x$ has $t$ fat
%neighbors.
Let  $x$ be the unique slim vertex of $ \Ho^{(t)} $.
%Suppose that $\Ho$ is an indecomposable Hoffman graph
%with $\lambda_{\min}(\Ho)\geq-t$ containing $\Ho$ as a Hoffman subgraph.
%Let $x$ be the slim vertex of $ \Ho^{1,t} $.
Let $\psi$ be a reduced representation of norm $t$ of $\Ho$.
Then $\psi(x) = 0$,
	hence $x$ is an isolated vertex in $\TS(\Ho)$.
%\clb{
%that is,
%	there are no edges connecting
%	$V_s(\Ho)\setminus V_s(\Ho^{1,t})$ and $V_s(\Ho^{1,t})$ in $\TS(\Ho)$.
%From Lemma~\ref{lm:s2},
%%	$\Ho=\Ho^{(t)}\uplus\subgg{V_s(\Ho)\setminus V_s(\Ho^{1,t})}{\Ho}$.
%	$\Ho=\Ho^{(t)}\uplus\subgg{V_s(\Ho)\setminus \{x\}}{\Ho}$.
%}
Thus
$\Ho=\Ho^{(t)}\uplus\subgg{V_s(\Ho)\setminus \{x\}}{\Ho}$
by Lemma~\ref{lm:s2}.
%\clr{
%Since $\TS(\Ho)$ is connected by Lemma~\ref{lm:s2}, 
%$\Ho$ has $x$ as its unique slim vertex. Since 
%$\lambda_{\min}(\Ho)\geq-t$, $\Ho$ has no fat vertices other than
%those in $\Ho$. Therefore we conclude
%$\Ho=\Ho^{(t)}$.
%}
%This means that it is impossible to add a slim vertex $y$
%	such that $x$ and $y$ are adjacent in $\TS^-(\Ho)$.
%Clearly $x$ can have at most $t$ fat neighbors.
%This shows that $\Ho^{(t)}$ is 
%$-t$-saturated.
%$(-t)$-maximal. 
\end{proof} 

%\begin{defi}
%Suppose that 
%a Hoffman graph $\Ho$ is a sum $\biguplus_{i=1}^n H^i$.
%The decomposition graph $D$ of the decomposition
%$\Ho=\biguplus_{i=1}^n H^i$ is the 
%graph with vertex set $\{1,\dots,n\}$, where
%two vertices $i,j$ are adjacent if $V_f(H^i)\cap V_s(H^j)\neq
%\emptyset$. If $\Ho$ is connected, then $D$ is also connected.
%\end{defi}
%\newpage
%\section{Results}

\begin{lemma}\label{lm:007x}
Let $\Ho$ be a fat Hoffman graph with smallest eigenvalue at least $-3$. 
Let $\psi$ be a reduced representation of norm $3$ of $\Ho$. 
Then for any distinct slim vertices $x,y$ of $\Ho$,
$(\psi(x),\psi(y))\in\{1,0,-1\}$.
\end{lemma}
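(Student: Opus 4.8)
The plan is to combine the built-in integrality of the inner products $(\psi(x),\psi(y))$ with two bounds: an upper bound read off directly from the defining formulas of a reduced representation, and the Cauchy--Schwarz inequality controlled by the norms $(\psi(x),\psi(x))$. The whole argument is elementary; the only work is organizing the bookkeeping correctly.

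First I would record the norm bound. Since $\Ho$ is fat, every slim vertex $x$ has at least one fat neighbor, so $|N^f(x)|\geq 1$. On the other hand $\psi(x)\in\R^n$, so its squared length is nonnegative; thus $(\psi(x),\psi(x))=3-|N^f(x)|\geq 0$ forces $|N^f(x)|\leq 3$. Consequently $(\psi(x),\psi(x))\in\{0,1,2\}$ for every slim vertex, and in particular $\|\psi(x)\|\leq\sqrt{2}$. Next I would observe, straight from Definition~\ref{df:redrep}, that $(\psi(x),\psi(y))$ is an integer, equal to $1-|N^f(x,y)|$ when $x\sim y$ and to $-|N^f(x,y)|$ otherwise; in either case $(\psi(x),\psi(y))\leq 1$. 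Combining this with Cauchy--Schwarz, $|(\psi(x),\psi(y))|\leq\|\psi(x)\|\,\|\psi(y)\|\leq 2$, so the only integer values left to rule out are $\pm 2$, and the upper bound $\leq 1$ already kills $+2$. Hence it suffices to exclude the single value $-2$.

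Finally I would exclude $(\psi(x),\psi(y))=-2$ by reading the defining formulas backwards. In the adjacent case this forces $|N^f(x,y)|=3$, and in the non-adjacent case $|N^f(x,y)|=2$; either way $|N^f(x,y)|\geq 2$. Since every common fat neighbor of $x$ and $y$ is in particular a fat neighbor of each, this gives $|N^f(x)|\geq 2$ and $|N^f(y)|\geq 2$, whence $(\psi(x),\psi(x))=3-|N^f(x)|\leq 1$ and likewise for $y$. Cauchy--Schwarz then yields $(\psi(x),\psi(y))^2\leq 1$, contradicting $(-2)^2=4$. Therefore $(\psi(x),\psi(y))\in\{-1,0,1\}$, as claimed. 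There is no genuine obstacle in this proof; the only point requiring attention is the chain of implications that converts the extremal value $-2$ first into a lower bound on the number of common fat neighbors $|N^f(x,y)|$, then into lower bounds on $|N^f(x)|$ and $|N^f(y)|$, and finally into the norm bounds that Cauchy--Schwarz contradicts.
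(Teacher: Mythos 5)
Your proof is correct and follows essentially the same route as the paper's: fatness bounds every norm $(\psi(x),\psi(x))=3-|N^f_\Ho(x)|$ by $2$, Cauchy--Schwarz then caps $|(\psi(x),\psi(y))|$ at $2$, and the defining formulas $1-|N^f_\Ho(x,y)|$ resp.\ $-|N^f_\Ho(x,y)|$ rule out the extreme values. The only cosmetic difference is that the paper excludes $-2$ via the equality case of Cauchy--Schwarz ($\psi(x)=\pm\psi(y)$ of norm $2$, hence a unique fat neighbor and $|N^f_\Ho(x,y)|\le 1$), whereas you run the same count in the contrapositive direction using integrality; both arguments are the same bookkeeping.
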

\begin{proof}
Since $\Ho$ is fat, we have $(\psi(x),\psi(x))\leq2$ for all $x\in V_s(\Ho)$.
Thus $|(\psi(x),\psi(y))|\leq2$ for all $x,y\in V_s(\Ho)$ by 
Schwarz's inequality. Equality holds only if
$\psi(x)=\pm\psi(y)$ and $(\psi(x),\psi(x))=2$. The latter condition
implies $|N_{\Ho}^f(x)|=1$, hence $|N_{\Ho}^f(x,y)|\leq1$. Thus
$(\psi(x),\psi(y))\geq-1$, while $(\psi(x),\psi(y))=2$ cannot occur
unless $x=y$, by Definition~\ref{df:redrep}. Therefore, 
$|(\psi(x),\psi(y))|<2$, and the result follows.
\end{proof}

Let $\Ho$ be a fat Hoffman graph with smallest eigenvalue at least $-3$. 
Then by Lemma~\ref{lm:007x}, the edge set 
of the special graph $\TS^{\epsilon}(\Ho)$ is 
\[
\{\{x,y\}\mid x,y\in V_s(\Ho),\;(\psi(x),\psi(y))=\epsilon1\},
\]
for $\epsilon\in\{+,-\}$.

\begin{theorem}\label{thm:007}
Let $\Ho$ be a fat 
%$(-3)$-indecomposable Hoffman graph
indecomposable Hoffman graph
	with smallest eigenvalue at least $-3$.
Then every slim vertex has at most three fat neighbors.
%Let $\psi$ be a reduced representation of norm $3$ of $\Ho$. 
%Then one of the following holds:
%\clb{
Moreover, the following statements hold:
\begin{enumerate}[{\rm(i)}]
\item\label{007-0} If some slim vertex has three fat neighbors,
	then $\Ho\cong\Ho^{(3)}$.
\item\label{007-1}
	 If no slim vertex has three fat neighbors and some slim vertex has exactly two fat neighbors,
	 then $\redlam (\Ho,3) \simeq \zz^n$ for 
%	some positive integer $n$.
	some positive integer $n$.
\item\label{007-2} 
If every slim vertex has a unique fat neighbor,
	then $\redlam (\Ho,3)$ is an irreducible root lattice.
\end{enumerate}
%}
%\clr{
%\begin{enumerate}[{\rm(i)}]
%\item If some slim vertex has three fat neighbors...
%\item\label{007-1} If some slim vertex has at least two fat neighbors,
%	then $\redlam (\Ho,3) \simeq \Z^n$ for 
%	some positive integer $n$.
%\item\label{007-2} 
%If every slim vertex has a unique fat neighbor,
%	then $\redlam (\Ho,3)$ is an irreducible root lattice.
%\end{enumerate}
%}
\end{theorem}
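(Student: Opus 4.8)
The plan is to read everything off a reduced representation $\psi$ of norm $3$, which exists because $\lambda_{\min}(\Ho)\ge-3$ (Theorem~\ref{thm:WN1}). Since $(\psi(x),\psi(x))=3-|N^f(x)|$ is a squared length, it is nonnegative, and as $\Ho$ is fat we also have $|N^f(x)|\ge1$; hence $1\le|N^f(x)|\le3$ for every slim vertex, which is the asserted bound, and the slim vertices split according to whether $(\psi(x),\psi(x))$ equals $2$, $1$, or $0$ (i.e. $|N^f(x)|$ equals $1$, $2$, or $3$). For (i), if some slim vertex $x$ has three fat neighbours $f_1,f_2,f_3$, then the subgraph induced on $\{x,f_1,f_2,f_3\}$ is exactly $\Ho^{(3)}$, the $f_i$ being pairwise non-adjacent by Definition~\ref{df:Hg}. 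Thus $\Ho$ contains $\Ho^{(3)}$ as a Hoffman subgraph, and Lemma~\ref{lem:sat} with $t=3$ gives $\Ho=\Ho^{(3)}\uplus\Ho'$, so indecomposability forces $\Ho\cong\Ho^{(3)}$.

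Assume henceforth that $|N^f(x)|\in\{1,2\}$ for every slim $x$, so $(\psi(x),\psi(x))\in\{1,2\}$. By Lemma~\ref{lm:007x} we have $(\psi(x),\psi(y))\in\{-1,0,1\}$ for distinct $x,y$, so $L:=\redlam(\Ho,3)$ is an integral positive-definite lattice generated by the vectors $\psi(x)$, each of norm $1$ or $2$, with pairwise inner products in $\{-1,0,1\}$. By Definition~\ref{df:special} the non-orthogonality graph of this generating set is precisely $\TS(\Ho)$, which is connected since $\Ho$ is indecomposable (Lemma~\ref{lm:s2}). Statements (ii) and (iii) then reduce to a single lattice-theoretic dichotomy for such an $L$. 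When every slim vertex has a unique fat neighbour, all generators are roots (norm $2$), so $L$ is a root lattice; by the classification of root lattices it is an orthogonal sum of copies of $\A_n,\D_n,\E_n$, and since a norm-$2$ vector cannot straddle two orthogonal summands, connectedness of $\TS(\Ho)$ forces a single irreducible summand. This is (iii).

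For (ii), some generator $v=\psi(x_0)$ has norm $1$, and I would induct on $\rank L$. Each other generator $w$ satisfies $(v,w)\in\{-1,0,1\}$; replacing every $w$ with $(v,w)=\pm1$ by $w'=w-(v,w)v$ makes it orthogonal to $v$ and lowers its norm by $1$, while a norm-$1$ generator with $(v,w)=\pm1$ is forced to be $\pm v$ by Cauchy--Schwarz. Hence $L=\zz v\perp L'$, where $L'$ is again generated by vectors of norm $1$ or $2$ with inner products in $\{-1,0,1\}$. The crucial point is that every connected component of the generating set of $L'$ still contains a norm-$1$ vector: a component consisting only of roots orthogonal to $v$ would have no edges to the $\pm v$-cluster and, by the computation above, none to the other components, so it would be a connected component of $\TS(\Ho)$ missing $x_0$, contradicting connectedness. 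Applying the induction hypothesis to each component of $L'$ yields $L'\cong\zz^{\rank L-1}$ and hence $L\cong\zz^{\rank L}$, which is (ii).

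The main obstacle is this last step: one must check that the orthogonal reduction against $v$ genuinely preserves all the hypotheses (integrality, norms in $\{1,2\}$, inner products in $\{-1,0,1\}$), and above all that connectedness of the special graph propagates a norm-$1$ vector into every component of $L'$, so that the induction closes. By comparison the root-lattice case (iii) is routine once the ADE classification is invoked, although it still relies on the elementary observation that, in a lattice all of whose generators have norm $2$, a norm-$2$ vector lies entirely in one orthogonal summand, which is exactly what converts connectedness of $\TS(\Ho)$ into irreducibility of the root lattice.
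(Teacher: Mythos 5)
Your proposal is correct, and for the degree bound, part (i) (via Lemma~\ref{lem:sat} with $t=3$) and part (iii) (a lattice generated by norm-$2$ vectors is a root lattice, irreducible because indecomposability makes $\TS(\Ho)$ connected by Lemma~\ref{lm:s2}) it coincides with the paper's argument. The genuine divergence is in part (ii). The paper splits off all unit vectors at once: it takes a maximal set of $m$ linearly independent norm-$1$ vectors of $\redlam(\Ho,3)$ (pairwise orthogonal by Cauchy--Schwarz, hence spanning a copy of $\zz^m$), notes that a norm-$2$ generator $\psi(x)$ with $(\psi(x),v)=\pm1$ for a unit vector $v$ gives the norm-$1$ vector $\psi(x)\mp v$ and is thereby absorbed into $\zz^m$ --- so every generator lies in $\zz^m$ or in an orthogonal complement $\Lambda'$ --- and then invokes connectedness of $\TS(\Ho)$ exactly once to force $m=0$ or $\Lambda'=0$, which delivers (ii) and (iii) simultaneously. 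You instead peel off one unit vector at a time via $w\mapsto w'=w-(v,w)v$ and induct on the rank; this is valid, but it creates the bookkeeping you rightly flag as the main obstacle. Both of your flagged checks do go through: the reduced inner products $(w_1',w_2')=(w_1,w_2)-(v,w_1)(v,w_2)$ can never be $\pm2$, because that would require $(v,w_1),(v,w_2)\in\{\pm1\}$, which forces $\|w_i'\|^2\le1$ and then Cauchy--Schwarz caps $|(w_1',w_2')|$ at $1$; and a generator $w$ of norm $2$ with $(v,w)=0$ has all its inner products unchanged by the reduction, so a root-only component of the reduced system would already be a proper component of the connected graph $\TS(\Ho)$ (or of the current component, deeper in the induction), which is exactly your propagation claim. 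In sum, the two proofs rest on the same mechanism --- a root non-orthogonal to a unit vector is absorbed, and connectedness of the special graph forbids a mixed orthogonal splitting of the generators --- but the paper's one-shot maximal decomposition buys brevity by applying connectedness only to the original generating set, whereas your induction buys a self-contained lattice lemma characterizing $\zz^n$ and makes explicit the absorption step that the paper asserts with a bare ``Indeed.''
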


\begin{proof}
As the smallest eigenvalue is at least $-3$,
%	any slim vertex has at most three fat neighbors. 
	every slim vertex has at most three fat neighbors.

%\clb{
If $|N_{\Ho}^f(x)|=3$ for some slim vertex $x$ of $\Ho$,
	then $\Ho$ contains $\Ho^{(3)}$ as a subgraph.
Thus $\Ho=\Ho^{(3)}$ by Lemma~\ref{lem:sat},
%	and the conclusion clearly holds.
	and (i) holds.
%}
%\clr{
%If $|N_{\Ho}^f(x)|=3$ for some slim vertex $x$ of $\Ho$,
%	then $\Ho$ has exactly one slim vertex and $3$ fat vertices,
%%	i.e. $\Ho\simeq \Ho_{18}$ as in Figure \ref{Hoffmans},
%	i.e. $\Ho\simeq \Ho^{(3)}$ as in Figure \ref{Hoffmans0},
%	and therefore the lemma holds with $n=0$.
%}
Hence we may assume 
that  $|N_{\Ho}^f(x)|\le 2$ for all $x\in V_s(\Ho)$.
Then for each $x\in V_s(\Ho)$ we have $\|\psi (x)\|^2=1$ (resp.\  $2$)
	if and only if $|N_{\Ho}^f(x)|=2 $ (resp.\ $|N_{\Ho}^f(x)|=1$).
%If $||\psi(x)||^2 =1$ for all slim vertices $x$,
%then $\redlam (\Ho,3) \simeq \zz$, as $\Ho$ is indecomposable.
%	then it is easy to see that  as $\Ho$ is indecomposable.
Suppose that $\redlam (\Ho,3)$ contains $m$ linearly independent
vectors of norm $1$. We claim that
$\redlam (\Ho,3)$ can be written as an orthogonal direct sum
$\zz^m\oplus\Lambda'$, where $\Lambda'$ is a lattice
containing no vectors of norm $1$. Indeed, if
$x$ is a slim vertex such that $\|\psi(x)\|^2=2$
and $\psi(x)\notin\zz^m$, then $\psi(x)$ is orthogonal to
$\zz^m$. This implies
$\redlam (\Ho,3)=\zz^m\oplus\Lambda'$ and
$\psi(V_s(\Ho))\subset\zz^m\cup\Lambda'$.

If $m>0$ and $\Lambda'\neq0$, then the special graph
$\TS(\Ho)$ is disconnected. This contradicts the 
indecomposability of $\Ho$ by Lemma~\ref{lm:s2}.
Therefore, we have either $m=0$ or $\Lambda'=0$.
In the latter case, (ii) holds. In the former case,
$\redlam (\Ho,3)=\Lambda'$ is generated by vectors of
norm $2$, hence it is a root lattice. Again by
the assumption and Lemma~\ref{lm:s2}, 
(iii) holds.
\end{proof}

We shall see some examples for the case (ii) of Theorem~\ref{thm:007}
in the next section. As for the case (iii), 
$\redlam(\Ho,3)$ is an irreducible root lattice of
type $\A_n$, $\D_n$ or $\E_n$.
If $\redlam(\Ho,3)$ is not an irreducible root lattice of
type $\E_n$, then it can be imbedded into the standard lattice,
hence the results of the next section applies. 
On the other hand,
if $\redlam(\Ho,3)$ is an irreducible root lattice of
type $\E_n$, then it is contained in the irreducible
root lattice of type $\E_8$, and hence there are only
finitely many possibilities. For example,
Let $\Gamma$ be
any ordinary graph with smallest eigenvalue at least $-2$
(see \cite{new} for a description of such graphs).
Attaching a fat neighbor to each vertex of $\Gamma$ gives a fat
Hoffman graph with smallest eigenvalue at least $-3$.
However, this Hoffman graph may not be maximal among
fat Hoffman graphs with smallest eigenvalue at least $-3$.
%If $\Gamma$ is connected, then $\Ho$ is indecomposable and
%$\redlam(\Ho,3)$ is an irreducible root lattice.
Therefore, we aim to classify fat Hoffman graphs with
smallest eigenvalue at least $-3$ which are maximal in
$\E_8$. This may seem a computer enumeration problem,
but it is harder than it looks.

\begin{exa}\label{ex:mE8}
Let $\Pi$ denote the root system of type $\E_8$. Fix
$\alpha\in\Pi$. Then there are elements $\beta_i\in\Pi$
($i=1,\dots,28$) such that
\[
\{\beta\in\Pi\mid(\alpha,\beta)=1\}=
\bigcup_{i=1}^{28}\{\beta_i,\alpha-\beta_i\}.
\]
Let $V$ denote the set of $57$ roots consisting of the above set
and $\alpha$. Then $V$
is a reduced representation of 
a fat Hoffman graph $\Ho$ with $29$ fat
vertices.
%whose set of slim vertices is
%$V=\{\alpha\}\cup\bigcup_{i=1}^{28}\{\beta_i,\alpha-\beta_i\}$.
The fat vertices of $\Ho$ are $f_i$ ($i=0,1,\dots,28$),
$f_0$ is adjacent to $\alpha$, 
and $f_i$ is adjacent to $\beta_i,\alpha-\beta_i$
($i=1,\dots,28$).
It turns out that $\Ho$ is maximal among 
fat Hoffman graphs with smallest eigenvalue at least $-3$.
Indeed, no fat vertex can be attached, since the root lattice
of type $\E_8$ is generated by $V\setminus\{\gamma\}$ for
any $\gamma\in V$, and attaching another fat neighbor to
$\gamma$ would mean the existence of a vector of norm $1$
in the dual lattice $\E_8^*$ of $\E_8$. 
%But the dual lattice of type $\E_8$
%isometric to that of $\E_8$ and hence contains no vectors of norm $1$.
Since $\E_8^*=\E_8$, there are no vectors of norm $1$ in $\E_8^*$.
This is a contradiction. If a slim vertex can be attached,
then it can be represented by $\delta\in\Pi$ with $(\alpha,\delta)=0$.
Then there exists $i\in\{1,\dots,28\}$ such that
$(\beta_i,\delta)=\pm1$. Exchanging $\beta_i$ with $\alpha-\beta_i$
if necessary, we may assume $(\beta_i,\delta)=-1$.
This implies that $\beta_i$ and $\delta$
have a common fat neighbor. Since
$(\beta_i,\alpha-\beta_i)=-1$, $\beta_i$ and $\alpha-\beta_i$
have a common fat neighbor. Since $\beta_i$ has a unique fat
neighbor, $\delta$ and $\alpha-\beta_i$
have a common fat neighbor, contradicting
$(\delta,\alpha-\beta_i)=1$.
\end{exa}

On the other hand, it is known that there is a slim graph $\Gamma$
with $36$ vertices represented by the root system of type $\E_8$
(see \cite{new}). Attaching a fat neighbor to each vertex of
$\Gamma$ gives a fat Hoffman graph $\Ho'$ with smallest eigenvalue $-3$
such that $\redlam(\Ho',3)$ is isometric to the root lattice of
type $\E_8$. The graph $\Ho'$ is not contained in $\Ho$,
so there seems a large number of maximal fat Hoffman graphs
represented in the root lattice of type $\E_8$.

%\newpage
\section{Integrally represented Hoffman graphs}\label{sec:int}

%We remark that the condition (iii) above does not imply (ii).
%Indeed, the graph $H_4$ given in \cite{hlg} satisfies (iii)
%with $\mu=-2$ but it is not $(-2)$-fat-saturated.

In this section, we consider a fat \sat{(-3)}\ Hoffman graph
$\Ho$ such that $\redlam(\Ho,3)$ is a sublattice of the standard
lattice $\zz^n$. 
Since any of the exceptional root lattices $\E_6,\E_7$ and $\E_8$
cannot be embedded into the standard lattice (see \cite{Eb}),
this means that, in view of Theorem~\ref{thm:007},
$\redlam(\Ho,3)$ is isometric to $\zz^n$ or a root lattice of
type $\A_n$ or $\D_n$. Note that $\redlam(\Ho,3)$ cannot be 
isometric to the lattice $\A_1$, since this would imply that $\Ho$
has a unique slim vertex with a unique fat neighbor, contradicting
\sat{(-3)}ness. The following example gives a
fat \sat{(-3)}\ graph $\Ho$ with $\redlam(\Ho,3)\cong\zz$.

\begin{exa}\label{exa:A3t}
Let $\Ho$ be the Hoffman graph with vertex set $V_s(\Ho)\cup V_f(\Ho)$,
where $V_s(\Ho)=\zz/4\zz$, $V_f(\Ho)=\{f_i\mid i\in\zz/4\zz\}$, and with
edge set
\[
\{\{0,2\},\{1,3\}\}\cup\{\{i,f_j\}\mid i=j\text{ or }j+1\}.
\]
Then $\Ho$ is a fat indecomposable \sat{(-3)}\ Hoffman graph
such that 
$\redlam (\Ho,3)$ is isomorphic to the standard lattice $\zz$. Since
$\TS^-(\Ho)$ has edge set $\{\{i,i+1\}\mid i\in\zz/4\zz\}$, 
$\TS^-(\Ho)$ is isomorphic to the Dynkin graph $\tilde{\A}_3$. 
Theorem~\ref{thm:001} below implies that $\Ho$ is maximal, in the
sense that no fat indecomposable \sat{(-3)}\ Hoffman graph
contains $\Ho$.
\end{exa}

For the remainder of this section, 
we let $\Ho$ be a fat indecomposable \sat{(-3)}\
Hoffman graph 
%$\lambda_{\min}(\Ho) \geq -3$ 
%satisfying conditions (FS) and (N3) 
such that 
$\redlam(\Ho,3)$ is isomorphic to a sublattice of the standard lattice
$\zz^n$.
Let $\phi$
%$\phi:  V(\Ho)\to \R^n$
be a representation of norm $3$ of $\Ho$. 
%Since $\redlam(\Ho,3)$ is isomorphic to a sublattice of 
%$\zz^n$ for some positive integer $n$, 
Then we may assume that
$\phi$ is a mapping from $V(\Ho)$ to $\zz^n\oplus\zz^{V_f(\Ho)}$,
where its composition with the projection $\zz^n\oplus\zz^{V_f(\Ho)}
\to\zz^n$ gives a reduced representation
$\psi:V_s(\Ho)\to\zz^n$. It follows from the definition of
a representation of norm $3$ that
\[
\phi(s)=\psi(s)+\sum_{f\in N_{\Ho}^f(s)} \be_f,
\]
%\begin{equation}\label{eq:psis1}
\[
\psi(s)=\sum_{j=1}^n\psi(s)_j\be_j,
\]
%\end{equation}
%\begin{equation}\label{eq:psis2}
\[
\psi(s)_j\in\{0,\pm1\},
\]
%\end{equation}
and
\begin{equation}\label{eq:psis}
|\{j\mid j\in\{1,\dots,n\},\;\psi(s)_j\in\{\pm1\}\}|
=3-|N_{\Ho}^f(s)|\leq2.
\end{equation}

\begin{lemma}\label{lm:002}
%Let $\Ho$ be a 
%%indecomposable 
%$(-3)$-fat-saturated fat 
%Hoffman graph 
%%satisfying (FS) 
%such that 
%$\redlam(\Ho,3)$ is isomorphic to a sublattice of the standard lattice $\zz^n$.
If $i\in\{1,\dots,n\}$ and $\psi(s)_i\neq0$ for some $s\in V_s(\Ho)$,
%$\be_i$ is not orthogonal to $\redlam(\Ho,3)$,
then there exist $s_1,s_2\in V_s(\Ho)$
such that $\psi(s_1)_i=-\psi(s_2)_i=1$.
%Suppose that a unit vector $\be_i$ of $\zz^n$ satisfies
%$(\psi(s),\be_i)\neq0$ for some $s\in V_s(\Ho)$. 
%	for a reduced representation $\psi$ of norm $3$ of $\Ho$.
%Let $s$ be a slim vertex of $\Ho$.
%Let $\phi : V(\Ho)\to \R^n$ be a representation of norm $3$ of $\Ho$.
%If $\phi (s)_\alpha\in\{-1,1\}$ for some $\alpha\in X$,
%	then there exists a slim vertex $t$ of $\Ho$
%	such that $\phi (s)_\alpha=-\phi (t)_\alpha$.
\end{lemma}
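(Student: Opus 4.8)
The plan is to argue by contradiction, using the \sat{(-3)}ness of $\Ho$ together with the observation that attaching a new fat vertex to a suitable set of slim vertices is, at the level of reduced Gram matrices, nothing more than a rank-one correction. Concretely, let $\Psi$ be the $|V_s(\Ho)|\times n$ matrix whose rows are the vectors $\psi(s)$, so that $G:=\Psi\Psi^T$ is the Gram matrix of the reduced representation, that is, $G=B(\Ho)+3I$, which is positive semidefinite by Theorem~\ref{thm:WN1}. First I would record the effect of an attachment: for a nonempty set $S\subseteq V_s(\Ho)$ each of whose vertices has at most two fat neighbors, let $\Ho^*$ be obtained from $\Ho$ by adjoining one new fat vertex $f^*$ adjacent to exactly the vertices of $S$. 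This is a legitimate Hoffman graph, since $f^*$ is adjacent to no other fat vertex and to at least one slim vertex. Attaching $f^*$ increases $|N^f_{\Ho}(s)|$ by one for each $s\in S$ and $|N^f_{\Ho}(s,t)|$ by one for each pair $s,t\in S$, while leaving all slim--slim adjacencies unchanged; a direct inspection of the three cases in Definition~\ref{df:redrep} then shows that every entry of the reduced Gram matrix indexed by a pair in $S\times S$ drops by exactly $1$ and all others are unchanged. Hence the reduced Gram matrix of $\Ho^*$ is $G^*=G-\chi_S\chi_S^T$, where $\chi_S$ is the indicator column vector of $S$, and by Theorem~\ref{thm:WN1} we have $\lambda_{\min}(\Ho^*)\geq-3$ if and only if $G^*$ is positive semidefinite.

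Next I would set up the contradiction. Suppose the conclusion fails for the coordinate $i$. Then the nonzero values among $\{\psi(s)_i\mid s\in V_s(\Ho)\}$ all share a single sign, so after replacing $\be_i$ by $-\be_i$ if necessary (an isometry of $\zz^n$ preserving all hypotheses) we may assume $\psi(s)_i\in\{0,1\}$ for every $s$, with $\psi(s)_i=1$ for at least one $s$. Put $S=\{s\in V_s(\Ho)\mid \psi(s)_i=1\}$, a nonempty set. Because $\psi(s)_i\in\{0,1\}$ for all $s$, the $i$-th column of $\Psi$ is precisely $\chi_S$, i.e.\ $\chi_S=\Psi\be_i$. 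Moreover each $s\in S$ satisfies $\psi(s)_i=1\neq0$, so by (\ref{eq:psis}) it has at most two fat neighbors; thus the attachment of the previous paragraph is legitimate for this $S$.

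Finally I would compute. Writing $E_{ii}=\be_i\be_i^T$, we have $\chi_S\chi_S^T=(\Psi\be_i)(\Psi\be_i)^T=\Psi E_{ii}\Psi^T$, so $G^*=G-\chi_S\chi_S^T=\Psi(I-E_{ii})\Psi^T$. Since $I-E_{ii}$ is a diagonal coordinate projection, it is positive semidefinite, and therefore so is $G^*$. By the criterion above this gives $\lambda_{\min}(\Ho^*)\geq-3$, contradicting the assumption that $\Ho$ is \sat{(-3)}. Consequently both signs must occur, which is the assertion of the lemma. I expect the only delicate part to be the verification of the rank-one update formula $G^*=G-\chi_S\chi_S^T$ and the confirmation that the attachment respects the Hoffman-graph axioms (in particular that $S\neq\emptyset$ and that no vertex of $S$ acquires a fourth fat neighbor); once this bookkeeping is settled, the linear-algebraic core is immediate from the positive semidefiniteness of $I-E_{ii}$.
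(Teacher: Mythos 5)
Your proof is correct and follows essentially the same route as the paper: assuming the nonzero $i$-th coordinates all share one sign, you attach a new fat vertex to $S=\{s\mid\psi(s)_i=1\}$ and contradict the \sat{(-3)}ness of $\Ho$. Your rank-one update $G^*=G-\chi_S\chi_S^T=\Psi(I-E_{ii})\Psi^T$ is just the matrix form of the paper's observation that composing $\psi$ with the coordinate projection $\zz^n\to\zz^{n-1}$ yields a reduced representation of norm $3$ of the enlarged graph, so the two arguments coincide in substance.
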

%}
\begin{proof}
By way of contradiction,
we may assume without loss of generality that $i=n$, and
$\psi(s)_n\in\{0,1\}$ for all $s\in V_s(\Ho)$. 
Let $\Go$ be the Hoffman graph obtained from $\Ho$ by attaching
a new fat vertex $g$ and join it to all the slim vertices $s$ of $\Ho$
satisfying $\psi(s)_n=1$. Then the composition of 
$\psi:V_s(\Ho)=V_s(\Go)\to\zz^n$ and the projection
$\zz^n\to\zz^{n-1}$ gives a reduced representation of norm $3$
of $\Go$. By Theorem~\ref{thm:WN1}, $\Go$ has smallest eigenvalue
at least $-3$. This contradicts the assumption that $\Ho$ is
\sat{(-3)}.
%\clb{
%Suppose that $\phi (s)_{\alpha}\neq -\phi (t)_{\alpha}$
%	for all $t\in V_s(\Ho)\setminus\{s\}$.
%Put $S_0=\{v\in V_s(\Ho)\mid \phi (v)_{\alpha}=0\}$
%	and $S_1=\{v\in V_s(\Ho)\mid \phi (v)_{\alpha}=\phi (s)_{\alpha}\}$.
%From Lemma~\ref{lm:001},
%	$V_s(\Ho)=S_0\cup S_1$.
%Let $V$ be a set satisfying $V\setminus V_f(\Ho)=\{g\}$.
%%Let $\sigma' : V\to \{1,\ldots,n+1\}$ be an injection such that,
%%	for each $f\in V$,
%%\[
%%	\sigma' (f)=\begin{cases}
%%	n+1 & \text{if }f=g,\\
%%	\sigma(f) & \text{otherwise}.
%%	\end{cases}
%%\]
%Let $\varphi : V\to \R^{n}$ be an extension of $\phi$
%	such that,
%	for each $v\in V(\Ho)$,
%	\[\varphi (v)_x=\begin{cases}
%	1 & \text{if }x=\alpha\text{ and } v\in S_{1},\\
%	\phi (v)_x & \text{otherwise},
%	\end{cases}\]
%	and $\varphi (g)={\bf e_{\alpha}}$.
%Then $\varphi$ is a representation of norm $3$.
%% (cf. \cite{hlg}).
%This means that we can create a new fat Hoffman graph $\Go$
%	by adding a fat vertex $g$.
%Hence $\Ho$ is not $(-3)$-fat-saturated,
%	a contradiction.
%Thus the lemma holds.
%}
\end{proof}

%We shall now prove Proposition~\ref{p:001}.
\begin{prop}\label{p:001}
%Let $\Ho$ be a fat indecomposable $(-3)$-fat-saturated Hoffman graph 
%%satisfying (FS) and (N3) 
%such that 
%%$\Lambda(G) \subseteq {\bf Z}^{X \cup F}$. 
%$\redlam (\Ho,3)$ is isomorphic to a sublattice of the standard lattice
%$\zz^n$.
%%\clb{
%%and $G$ does not contain $\Ho_{18}$.
%%}
%Then 
The graph 
$\TS^-(\Ho)$ is connected. 
\end{prop}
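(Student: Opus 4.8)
The plan is to derive the connectivity of $\TS^-(\Ho)$ from that of the full special graph and a single local claim: \emph{the two endpoints of every $E^+$-edge already lie in one connected component of $\TS^-(\Ho)$}. Since $\Ho$ is indecomposable, $\TS(\Ho)=\TS^+(\Ho)\cup\TS^-(\Ho)$ is connected by Lemma~\ref{lm:s2}. Granting the claim, any $\TS$-path between two slim vertices stays inside a single $\TS^-$-component (an $E^-$-edge trivially, an $E^+$-edge by the claim), and hence $\TS^-(\Ho)$ is connected. Throughout I write $\psi(s)=\sum_j\psi(s)_j\be_j$ with $\psi(s)_j\in\{0,\pm1\}$ and at most two nonzero entries by \eqref{eq:psis}, and I set $S_i^{\pm}=\{s\in V_s(\Ho)\mid\psi(s)_i=\pm1\}$.

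First I would record the edge description coming from Lemma~\ref{lm:007x}: two distinct slim vertices are joined in $\TS^{\epsilon}(\Ho)$ exactly when their images share precisely one coordinate, with equal signs if $\epsilon=+$ and opposite signs if $\epsilon=-$; sharing two coordinates would force the inner product to be even, hence $0$ by Lemma~\ref{lm:007x}, and the value $\pm2$ cannot occur. Now let $\{x,y\}$ be an $E^+$-edge. After flipping the sign of the shared coordinate (an isometry leaving the Gram matrix, hence $\TS^{\pm}$, unchanged) I may assume $x=\be_i+\alpha$ and $y=\be_i+\beta$ with $(\alpha,\beta)=0$, where each of $\alpha,\beta$ is either $0$ or a single $\pm\be_j$. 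By Lemma~\ref{lm:002} there is a vertex $z\in S_i^-$, say $z=-\be_i+\gamma$. A direct sign computation shows that $z$ can fail to be $E^-$-adjacent to $x$ only if $\gamma=\alpha$, and to $y$ only if $\gamma=\beta$; since $x\neq y$ forces $\alpha\neq\beta$, the vertex $z$ is $E^-$-adjacent to at least one of $x,y$. If it is adjacent to both, then $x$–$z$–$y$ is the desired $\TS^-$-path.

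The remaining case is the genuine obstacle. Here $z$ is adjacent to exactly one endpoint, say $x$, and $\gamma=\beta\neq0$; writing $\beta=b\be_q$ with $b=\pm1$, we obtain a \emph{twin pair} $z=-\be_i+b\be_q$ and $y=\be_i+b\be_q$, which agree at coordinate $q$, have inner product $0$, and are therefore not even $\TS$-adjacent. To bridge them I would apply Lemma~\ref{lm:002} at coordinate $q$ to obtain a vertex $r$ with $\psi(r)_q=-b$, and claim that \emph{any} such $r$ is $E^-$-adjacent to both $z$ and $y$. The decisive ingredient is the exclusion of antipodes given by Lemma~\ref{lm:007x}: if the coordinate of $r$ off $q$ were $+\be_i$ then $(\psi(r),\psi(z))=-2$, and if it were $-\be_i$ then $(\psi(r),\psi(y))=-2$, both impossible; so $r$ shares with each of $z,y$ only the coordinate $q$, with opposite signs there, whence $(\psi(r),\psi(z))=(\psi(r),\psi(y))=-1$. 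This produces the path $x$–$z$–$r$–$y$ in $\TS^-(\Ho)$ and proves the claim.

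I expect the only real difficulty to be this twin degeneracy, and the key realization is that it is resolved not by an unbounded induction over coordinates but in one further step, precisely because the two antipodal vectors that would block the connection are ruled out by the norm restriction of Lemma~\ref{lm:007x}. Thus the argument closes with a path of length at most three. The parts that remain routine are verifying the edge description of the first paragraph and carrying out the elementary inner-product bookkeeping in the two computations above.
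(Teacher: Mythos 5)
Your route is essentially the paper's: both arguments reduce, via indecomposability and Lemma~\ref{lm:s2}, to showing that the endpoints of every $E^+$-edge lie in a single component of $\TS^-(\Ho)$, and both bridge such an edge by applying Lemma~\ref{lm:002} at the shared coordinate and using the exclusion of inner products $\pm2$ (Lemma~\ref{lm:007x}) to force the needed $\TS^-$-adjacencies. The paper packages the bookkeeping into Claim~\ref{cl:001} (any two slim vertices with opposite signs at some coordinate are at $\TS^-$-distance at most $2$) and applies it twice, getting a path of length at most $4$; your direct case analysis of the twin configuration is a mild sharpening that yields length at most $3$. All of your inner-product computations, including the exclusion of $\delta=\pm\be_i$ for the bridging vertex $r$, check out.

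There is, however, one step that is wrong as stated: ``since $x\neq y$ forces $\alpha\neq\beta$.'' This tacitly assumes the reduced representation $\psi$ is injective, which you cannot assume here: injectivity is Lemma~\ref{lm:inj}, proved \emph{after} this proposition and using it, and Example~\ref{exa:A3t} shows that distinct slim vertices $x,y$ with $\psi(x)=\psi(y)$ joined by an $E^+$-edge genuinely occur. Fortunately the gap closes inside your own setup: if $\alpha=\beta$, then your derived constraint $(\alpha,\beta)=0$ forces $\alpha=\beta=0$ (the alternative $\alpha=\beta\neq0$ would give $(\psi(x),\psi(y))=2$, excluded by Lemma~\ref{lm:007x}), and in that case the failure conditions $\gamma=\alpha\neq0$ and $\gamma=\beta\neq0$ cannot hold, so any $z$ with $\psi(z)_i=-1$ satisfies $(\psi(z),\psi(x))=(\psi(z),\psi(y))=-1$ and bridges $x$ and $y$ in one step. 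You should state this degenerate case explicitly rather than deriving $\alpha\neq\beta$ from $x\neq y$; with that one-line repair the proof is complete.
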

\begin{proof}%[Proof of Proposition~\ref{p:001}]
Before proving the proposition, we first show the following claim.
\begin{claim}\label{cl:001}
Let $s _1$ and $s_2$ be two slim vertices such that
	$\phi(s_1)_i = 1$ and $\phi(s_2)_i= -1$ for some 
	$i \in \{1,\dots,n\}$.
Then the distance between $s_1$ and $s_2$ is at most $2$ in $\TS^-(\Ho)$.
\end{claim}
%{\bf Proof of Claim.}
By (\ref{eq:psis}), we have $(\psi(s_1), \psi(s_2))\in\{0,-1\}$.
If $(\psi(s_1), \psi(s_2)) = -1$, then 
$s_1$ and $s_2$ are adjacent in $\TS^-(\Ho)$ by the definition, hence
the distance equals one. 
If $(\psi(s_1), \psi(s_2)) = 0$, then 
there exists a unique $j \in \{1,\dots,n\}$ such that
	$\phi(s_1)_j =\phi(s_2)_j=\pm1$.
%Without loss of generality we may assume that $\phi(s_1)_y =1$. 
From Lemma~\ref{lm:002},
%As $G$ is \clb{$(-3)$}-fat-saturated 
there exists a slim vertex $s_3$ such that 
$\phi(s_3)_j = -\phi(s_1)_j$.
If $\phi (s_3)_i\neq 0$,
	then $(\psi (s_q),\psi (s_3))\in\{\pm2\}$
	for some $q\in\{1,2\}$, which is a contradiction.
Hence $\phi(s_3)_i = 0$.
This implies that $(\psi(s_i),\psi(s_3))=-1$ for $i=1,2$, or equivalently,
$s_3$ is a common neighbor of $s_1$ and $s_2$ in $\TS^-(\Ho)$.
This shows the claim.

Since $\Ho$ is indecomposable, $\TS(\Ho)$ is connected by Lemma~\ref{lm:s2}.
Thus, in order to show the proposition, we only need to show that
	slim vertices $s_1$ and $s_2$ with 
	$(\psi(s_1), \psi(s_2)) = 1$ are connected by a path in $\TS^-(\Ho)$. 
%Let $s_1$ and $s_2$ with 
%	$(\psi(s_1), \psi(s_2)) = 1$ and
There exists $i\in\{1,\dots,n\}$ 
such that $\phi(s_1)_i = \phi(s_2)_i=\pm1$.
From Lemma~\ref{lm:002},
there exists a slim vertex $s_3$ such that $\phi(s_3)_i = -\phi(s_1)_i$,
and hence the distances between $s_1$ and $s_3$ and between $s_3$ and $s_2$ 
are at most $2$ in $\TS^-(\Ho)$ by Claim~\ref{cl:001}.
Therefore, $s_1$ and $s_2$ are connected by a path of length at most $4$
in $\TS^-(\Ho)$.
\end{proof}

\begin{lemma}\label{lm:inj}
%Let $m$ be a positive integer, and
%let $\Ho$ be a fat indecomposable $(-m)$-fat-saturated Hoffman graph.
%If $\psi$ be a reduced representation of norm $m$ of $\Ho$, then
%$\psi$ is injective.
Let $\Ho$ be a fat indecomposable \sat{(-3)}\ Hoffman graph.
Then the reduced representation of norm $3$ of $\Ho$ is injective
unless $\Ho$ is isomorphic to a subgraph of the graph given in
Example~\ref{exa:A3t}.
\end{lemma}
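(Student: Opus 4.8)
The plan is to prove the contrapositive: assume the reduced representation $\psi$ of norm $3$ is \emph{not} injective, and deduce that $\Ho$ must be (isomorphic to a subgraph of) the graph of Example~\ref{exa:A3t}. So suppose there are two distinct slim vertices $x,y$ with $\psi(x)=\psi(y)$. The first observation is that this forces $\|\psi(x)\|^2=\|\psi(y)\|^2=(\psi(x),\psi(y))$, and by Lemma~\ref{lm:007x} together with the integrality from \eqref{eq:psis}, the only way two \emph{distinct} slim vertices can have equal images is when that common norm is $1$; equivalently both $x$ and $y$ have exactly two fat neighbors (since $\|\psi(s)\|^2=3-|N^f(s)|$). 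I would record this at the outset: non-injectivity can only arise from norm-$1$ vectors, and by \eqref{eq:psis} such $\psi(x)$ has the form $\pm\be_i$ for a single coordinate $i$.

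Next I would exploit \sat{(-3)}ness through Lemma~\ref{lm:002}. Since $\psi(x)=\be_i$ (after fixing signs and coordinates), Lemma~\ref{lm:002} guarantees a slim vertex with $i$-th coordinate equal to $-1$; more importantly, the saturation condition controls exactly which coordinates and sign patterns can occur. The key structural step is to pin down the whole coordinate picture: I expect that each coordinate $i\in\{1,\dots,n\}$ is ``used'' by only a bounded number of slim vertices, and that two vertices sharing an image $\pm\be_i$ severely restrict how any third vertex can interact with them via the inner products $\{0,\pm1\}$. Using the adjacency translation (two slim vertices are adjacent in $\TS^-$ iff their inner product is $-1$, in $\TS^+$ iff it is $+1$) and the connectivity of $\TS^-(\Ho)$ from Proposition~\ref{p:001}, I would argue that the images $\psi(V_s(\Ho))$ all lie in a very small set of integer vectors.

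The decisive reduction I anticipate is that $\redlam(\Ho,3)$ must then be isometric to $\zz$ (the rank-one case), because the presence of the duplicated norm-$1$ vector, combined with saturation and indecomposability, collapses the lattice to a single coordinate direction. Concretely, if $\psi(x)=\psi(y)=\be_1$ and there are vertices mapping to $-\be_1$, then every slim vertex must map into $\{0,\pm\be_1\}$ after accounting for the inner-product constraints and Lemma~\ref{lm:002}; a vertex using a second coordinate would produce an inner product of $\pm2$ with one of $x,y$ or violate saturation. Once the image is confined to $\{\be_1,-\be_1\}$ (vertices mapping to $0$ are ruled out by \sat{(-3)}ness as in Lemma~\ref{lem:sat}), I would reconstruct the Hoffman graph: the fat-neighbor data is forced, one checks there are at most four slim vertices arranged so that $\TS^-$ is the $4$-cycle $\tilde{\A}_3$, and this is precisely the graph of Example~\ref{exa:A3t} (or an induced subgraph of it).

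The main obstacle will be the bookkeeping in the structural step: ruling out that a third coordinate is genuinely needed, i.e.\ showing that the duplicated norm-$1$ vector cannot coexist with a richer integral representation. The delicate point is that a priori a vertex could have $\psi$-value $\be_1+\be_2$ (norm $2$) or $\be_2$, and I must show each such possibility either forces a forbidden inner product $\pm2$ with $x$ or $y$, or contradicts maximality via Lemma~\ref{lm:002} by allowing a new fat vertex to be attached. Carefully combining the norm constraint from \eqref{eq:psis}, the inner-product dichotomy of Lemma~\ref{lm:007x}, and the saturation mechanism of Lemma~\ref{lm:002} to close off all these branches is where the real work lies; once that is done, the identification with Example~\ref{exa:A3t} is a short finite check.
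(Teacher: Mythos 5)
Your overall route is the same as the paper's: argue by contrapositive, show a duplicated value $\psi(x)=\psi(y)$ must have norm $1$ (so $x,y$ each have two fat neighbors), use connectivity of $\TS^-(\Ho)$ from Proposition~\ref{p:001} to produce a neighbor of $x$, confine all images to $\pm\be_1$, and finish with a finite check against Example~\ref{exa:A3t}. (One small inaccuracy at the start: Lemma~\ref{lm:007x} does not by itself exclude the norm-$0$ case $\psi(x)=\psi(y)=0$, since inner product $0$ is allowed; the paper kills it because such vertices are isolated in $\TS(\Ho)$, contradicting indecomposability via Lemma~\ref{lm:s2} --- your appeal to Lemma~\ref{lem:sat} also works, but the operative hypothesis is indecomposability, not saturation.)

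The genuine gap is in your decisive step, and the two mechanisms you name there would both fail. Consider $\psi(x)=\psi(y)=\be_1$ and a vertex $u$ with $\psi(u)=-\be_1+\be_2$ adjacent to $x$ in $\TS^-(\Ho)$: its inner products with $\psi(x)$ and $\psi(y)$ are $-1$, never $\pm2$, so Lemma~\ref{lm:007x} is silent; and Lemma~\ref{lm:002} can be satisfied by a further vertex with value $-\be_1-\be_2$, so saturation is silent too --- indeed $\{\be_1,\be_1,-\be_1+\be_2,-\be_1-\be_2\}$ is a perfectly legal integral vector configuration, with all pairwise inner products in $\{0,\pm1\}$. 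What actually kills $u$ is the fat-vertex bookkeeping obtained by lifting $\psi$ to the unreduced representation $\phi$, which your plan never invokes. Since $x\neq y$ and $(\psi(x),\psi(y))=1$, Definition~\ref{df:redrep} forces $x\sim y$ with $N^f_{\Ho}(x,y)=\emptyset$, so $N^f_{\Ho}(x)=\{f_1,f_2\}$ and $N^f_{\Ho}(y)=\{f_3,f_4\}$ are disjoint; and $(\psi(u),\psi(x))=-1$ forces $u$ to share exactly one fat neighbor with $x$ if non-adjacent (two if adjacent), and likewise with $y$. A norm-$2$ vertex $u$ has a \emph{unique} fat neighbor, which would have to lie in both disjoint pairs --- contradiction. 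Hence every $\TS^-$-neighbor $z$ of $x$ satisfies $\psi(z)=-\be_1$ with $\phi(z)=-\be_1+\be_{f_1}+\be_{f_3}$ (up to relabeling), a fourth vertex is forced to $-\be_1+\be_{f_2}+\be_{f_4}$, and no fifth is possible, which is exactly Example~\ref{exa:A3t}. This translation of reduced inner products into counts of common fat neighbors is the missing idea; without it the branch analysis you correctly flag as ``where the real work lies'' cannot be closed.
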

\begin{proof}
Suppose that the reduced representation $\psi$ of norm $3$ of
$\Ho$ is not injective. Then there are two distinct
slim vertices $x$ and $y$ satisfing $\psi(x)=\psi(y)$.
Then
%$(\psi(x),\psi(y))=\|\psi(x)\|^2\geq0$.
%Since $(\psi(x),\psi(y))$ is an integer, we have
$(\psi(x),\psi(y))=0$ or $1$. 

If $(\psi(x),\psi(y))=0$, then
$\psi(x)=\psi(y)=0$, hence both $x$ and $y$ are isolated vertices,
contradicting the assumption that $\Ho$ is indecomposable.

Suppose $(\psi(x),\psi(y))=1$.
Since $\TS^-(\Ho)$ is connected by Proposition~\ref{p:001},
there exists a slim vertex $z$ such that $(\psi(x),\psi(z))=-1$.
Then we may assume
\begin{align*}
\phi(x)&=\be_1+\be_{f_1}+\be_{f_2},\\
\phi(y)&=\be_1+\be_{f_3}+\be_{f_4},\\
\phi(z)&=-\be_1+\be_{f_1}+\be_{f_3}.\\
\intertext{If $\Ho$ has another slim vertex $w$, then}
\phi(w)&=-\be_1+\be_{f_2}+\be_{f_4},
\end{align*}
and no other possibility occurs. Therefore, $\Ho$ is
isomorphic to either the graph given in Example~\ref{exa:A3t},
or its subgraph obtained by deleting one slim vertex.
\end{proof}

%\clb{
\begin{lemma}\label{lm:003_1}
%Let $\Ho$ be a fat 
%%$(-3)$-indecomposable 
%indecomposable 
%\clr{$(-3)$-fat-saturated} Hoffman graph 
%% satisfying (FS)
%such that 
%$\redlam (\Ho,3)$ is isomorphic to a sublattice of the standard lattice
%$\zz^n$.
Suppose that $s\in V_s(\Ho)$ has exactly two fat neighbors in $\Ho$.
Then the following statements hold.
\begin{enumerate}
\item for each $f\in N_{\Ho}^f(s)$,
$|N_{{\TS}^-(\Ho)}(s)\cap N_{\Ho}^s(f)|\le 2$,
\item
$|N_{{\TS}^-(\Ho)}(s)|\leq4$, and if equality holds, then
$\TS^-(\Ho)$ is isomorphic to the
graph $\tilde{\D}_4$,
\item
if $|N_{{\TS}^-(\Ho)}(s)|=3$, then 
two of the vertices in $N_{{\TS}^-(\Ho)}(s)$ have $s$ as their
unique neighbor in $\TS^-(\Ho)$.
%$\TS^-(\Ho)$ contains a subgraph
%isomorphic to the graph $\D_5$.
\end{enumerate}
%where $\epsilon_0,\epsilon_1\in\{\pm1\}$.
%	and $f'$ is the fat vertex different from $f$ in $N^f_{\Ho}(s)$.
%In particular,
%	there exists at most two neighbors of $s$ in ${\cal S}_{\Ho}^-$
%	such that they and $s$ have the common fat neighbor $f$ in $\Ho$,
%	i.e.,
\end{lemma}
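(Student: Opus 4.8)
The plan is to work with the integral representation $\phi$ of norm $3$ and its reduced part $\psi$ fixed just before the lemma, and to begin by normalizing $s$. Since $s$ has exactly two fat neighbours $f,f'$, equation~(\ref{eq:psis}) forces $\psi(s)$ to have a single nonzero coordinate, equal to $\pm1$; after relabelling coordinates and possibly changing a sign I may assume $\psi(s)=\be_1$, so that $\phi(s)=\be_1+\be_f+\be_{f'}$, and a slim vertex $t$ lies in $N_{\TS^-(\Ho)}(s)$ exactly when $\psi(t)_1=-1$. The backbone for all three parts is that each such $t$ shares a fat neighbour with $s$: writing $(\phi(s),\phi(t))=(\psi(s),\psi(t))+|N^f_{\Ho}(s,t)|$ with $(\psi(s),\psi(t))=-1$ and $(\phi(s),\phi(t))\in\{0,1\}$ gives $|N^f_{\Ho}(s,t)|\in\{1,2\}$, whence $N_{\TS^-(\Ho)}(s)\subseteq N^s_{\Ho}(f)\cup N^s_{\Ho}(f')$.

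For (i) I would apply the same identity to two distinct $t,t'\in N^s_{\Ho}(f)$: they share $f$, so $|N^f_{\Ho}(t,t')|\ge1$, and together with $(\phi(t),\phi(t'))\le1$ this yields $(\psi(t),\psi(t'))\le0$. Writing each $t\in N_{\TS^-(\Ho)}(s)\cap N^s_{\Ho}(f)$ as $\psi(t)=-\be_1+v_t$ with $v_t$ supported away from coordinate $1$ and of norm at most $1$, the inequality reads $(v_t,v_{t'})\le-1$. Since each $v_t$ is $0$ or a signed standard basis vector, a short argument shows such a family has at most two members, and when it has two they are $\be_k$ and $-\be_k$ for a common $k$; this proves (i).

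For (ii) the bound $|N_{\TS^-(\Ho)}(s)|\le4$ is immediate from (i) and the containment above. For the equality case I first note that a vertex sharing \emph{both} $f$ and $f'$ must have $\psi=-\be_1$, and by the size-two analysis of (i) it cannot coexist with a second vertex in $N_{\TS^-(\Ho)}(s)\cap N^s_{\Ho}(f)$; bookkeeping on $|A|+|B|-|A\cap B|=4$, where $A=N_{\TS^-(\Ho)}(s)\cap N^s_{\Ho}(f)$ and $B=N_{\TS^-(\Ho)}(s)\cap N^s_{\Ho}(f')$, then forces $A=\{t_1,t_2\}$ and $B=\{t_3,t_4\}$ disjoint with $\psi(t_1)=-\be_1+\be_k$, $\psi(t_2)=-\be_1-\be_k$, $\psi(t_3)=-\be_1+\be_{k'}$, $\psi(t_4)=-\be_1-\be_{k'}$, and $k\neq k'$ (otherwise $(\psi(t_1),\psi(t_3))=2$, against Lemma~\ref{lm:007x}). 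An inner-product check shows $\{s,t_1,t_2,t_3,t_4\}$ induces $K_{1,4}$, i.e.\ $\tilde{\D}_4$, in $\TS^-(\Ho)$. The substantial step is to show there are no further vertices, for which the recurring device is: if $w$ is a $\TS^-(\Ho)$-neighbour of $t_1$ outside this set, then inspecting $\psi(w)_1\in\{-1,0,1\}$ and using adjacency with $t_1$ to locate a common fat neighbour always produces $u\in\{s,t_2\}$ with $(\phi(w),\phi(u))=2$, impossible for distinct slim vertices. Running this at every $t_i$, and noting $s$ has no neighbour outside the set, connectedness of $\TS^-(\Ho)$ (Proposition~\ref{p:001}) forces $\TS^-(\Ho)\cong\tilde{\D}_4$.

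For (iii) the same bookkeeping shows a both-fat-sharing vertex is incompatible with $|N_{\TS^-(\Ho)}(s)|=3$, so up to interchanging $f,f'$ one has $|A|=2$, $|B|=1$, $A\cap B=\emptyset$, producing exactly the pair $t_1,t_2$ of type $\{f\}$ as above. Reusing the inner-product-$2$ device with partner $t_2$ (resp.\ $t_1$) shows that any $\TS^-(\Ho)$-neighbour of $t_1$ (resp.\ $t_2$) other than $s$ is impossible, so $t_1$ and $t_2$ each have $s$ as their unique neighbour, which is (iii). Throughout I may assume $\psi$ is injective by Lemma~\ref{lm:inj}; the only excluded graph is a subgraph of Example~\ref{exa:A3t}, whose special $-$-graph is a subgraph of $\tilde{\A}_3$ and attains $\TS^-$-degree at most $2$, so it satisfies the statement vacuously. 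I expect the equality analysis in (ii)---the global rigidity excluding extra vertices---to be the main obstacle, with the $(\phi(w),\phi(u))=2$ contradiction together with connectedness as the key mechanism.
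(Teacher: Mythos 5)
Your proposal is correct and follows essentially the same route as the paper's proof: normalizing $\psi(s)=\be_1$, using a common fat neighbour to force $(\psi(t),\psi(t'))\le 0$ among $\TS^-$-neighbours of $s$ attached to the same fat vertex (giving (i) and the bound in (ii)), and then proving the rigidity claim that the paired vertices have no further $\TS^-$-neighbours before invoking the connectedness of $\TS^-(\Ho)$ from Proposition~\ref{p:001} to identify $\tilde{\D}_4$. Your case split on $\psi(w)_1\in\{-1,0,1\}$ ending in a $(\phi(w),\phi(u))=2$ contradiction is just a repackaging of the paper's system of inequalities forcing $(\be_1,\psi(w))=(\be_2,\psi(w))=0$, and your extra appeal to Lemma~\ref{lm:inj} is harmless but unnecessary, since the argument nowhere needs injectivity of $\psi$.
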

\begin{proof}
Let $\psi$ be the reduced representation of norm $3$ of $\Ho$.
Since $s$ has exactly two fat neighbors, 
$(\psi(s),\psi(s))=1$. This means that we may assume without loss
of generality 
$\psi(s)=\be_1$.

Let $f\in N_{\Ho}^f(s)$.
If $t_1$ and $t_2$ are distinct vertices of 
$N_{{\TS}^-(\Ho)}(s)\cap N_{\Ho}^s(f)$, 
%then $\phi (t_1)$ and $\phi (t_2)$ can be expressed uniquely
%for some distinct $i,j\in \{1,\dots,n\}$ as follows:
%\[
%\begin{array}{llrrr}
%\phi (s) & ={\bf e}_{f} & +{\bf e}_{f'} & +\epsilon_0{\bf e}_{i},&\\
%\phi (t_1) & ={\bf e}_{f}&&-\epsilon_0{\bf e}_{i}&+\epsilon_1{\bf e}_{j},\\
%\phi (t_2) & ={\bf e}_{f}&&-\epsilon_0{\bf e}_{i}&-\epsilon_1{\bf e}_{j},
%\end{array}
%\]
%For $f\in N_{\Ho}^f(s)$,
%	let $t_1$ and $t_2$ be neighbors of $s$ in ${\cal S}_{\Ho}^-$
%	such that they and $s$ have a common fat neighbor $f$ in $\Ho$.
%Let $N_{\Ho}^f(s)=\{f,f'\}$.
%Let $t\in	N_{{\TS}^-(\Ho)}(s)\cap N_{\Ho}^s(f)$. Then by (FS),
%$f$ is the unique common fat neighbor of $s$ and $t$. 
%From Lemma~\ref{lm:001},
%	it is easy to check that the following equations can be obtained:
%For some $x,y\in X\ (x\neq y)$,
%Since
then
\begin{align*}
1&\geq(\phi(t_1),\phi(t_2))
\\&\geq
(\psi(t_1)+\be_f,\psi(t_2)+\be_f)
\nexteq
(\psi(t_1),\psi(t_2))+1,
\end{align*}
Thus we have $(\psi(t_1),\psi(t_2))\leq0$. 
%On the other hand,
Since $t_1,t_2\in N_{\TS^-(\Ho)}(s)$,
we have
$(\psi(s),\psi(t_1))$ $=(\psi(s),\psi(t_2))=-1$, and hence
we may assume without loss of generality that
%$j\in\{1,\dots,n\}$ and $\epsilon_1\in\{\pm1\}$
%such that
\begin{align}
\psi(t_1)&=-\be_1+\be_2,\label{t1}\\
\psi(t_2)&=-\be_1-\be_2.\label{t2}
\end{align}
Now it is clear that there cannot be another $t_3\in N_{\TS^-(\Ho)}(s)$.
%\[
%\begin{array}{llrrr}
%\phi (s) & ={\bf e}_{\sigma(f)} & +{\bf e}_{\sigma(f')} & +\epsilon_0{\bf e}_{x},&\\
%\phi (t_1) & ={\bf e}_{\sigma(f)}&&-\epsilon_0{\bf e}_{x}&+\epsilon_1{\bf e}_{y},\\
%\phi (t_2) & ={\bf e}_{\sigma(f)}&&-\epsilon_0{\bf e}_{x}&-\epsilon_1{\bf e}_{y},\\
%\phi (t_3) &\stackrel{?}{=}
%{\bf e}_{\sigma(f)}&&-\epsilon_0{\bf e}_{x}&+{\bf e}_{\sigma(f'')},
%\end{array}
%\]
%	where $\epsilon_0,\epsilon_1\in\{-1,1\}$.
%	(If there is such $t_3$, then $(\phi(t_1),\phi(t_3))=2$, contradiction.)
Thus $|N_{{\TS}^-(\Ho)}(s)\cap N_{\Ho}^s(f)|\le 2$.
This proves (i).

As for (ii), let $N_{\Ho}^f(s)=\{f,f'\}$. Then
\[|N_{{\TS}^-(\Ho)}(s)|\leq|N_{{\TS}^-(\Ho)}(s)\cap  N_{\Ho}^s(f)|+
|N_{{\TS}^-(\Ho)}(s)\cap  N_{\Ho}^s(f')|\leq4
\]
by (i).
To prove (iii) and the second part of (ii), 
we may assume that
$\{t_1,t_2\}=N_{\TS^-(\Ho)}(s)\cap N_{\Ho}(f)$.
We claim that neither $t_1$ nor $t_2$ has a neighbor in $\TS^-(\Ho)$
other than $s$. Suppose by contradiction, that $t_3\neq s$ is a neighbor
in $\TS^-(\Ho)$ of $t_1$. By (\ref{t1}) (resp.\ (\ref{t2})), 
$f$ is the unique fat
neighbor of $t_1$ (resp.\ $t_2$). In particular, $f$ is also a neighbor of $t_3$.
Observe
\begin{align*}
1&\geq(\phi(s),\phi(t_3))\geq(\psi(s),\psi(t_3))+1,\\
1&\geq(\phi(t_i),\phi(t_3))=(\psi(t_i),\psi(t_3))+1\quad(i=1,2).
\end{align*}
Thus
\begin{align*}
(\be_1,\psi(t_3))&\leq0,\\
(-\be_1\pm\be_2,\psi(t_3))&\leq0.
\end{align*}
These imply $(\be_1,\psi(t_3))=(\be_2,\psi(t_3))=0$. But then
$-1=(\psi(t_1),\psi(t_3))=(-\be_1+\be_2,\psi(t_3))=0$.
%\begin{align*}
%1&\geq(\phi(s),\phi(t_3))
%\\&\geq(e_1,\psi(t_3))+1,
%\end{align*}
%we have 
%\begin{equation}\label{e1}
%(e_1,\psi(t_3))\leq0.
%\end{equation}
%Also, since
%\begin{align*}
%1&\geq(\phi(t_2),\phi(t_3))
%\\&\geq(-e_1-e_2,\psi(t_3))+1
%&&\text{(by (\ref{t2})),}
%\end{align*}
%we have 
%\begin{align*}
%(e_2,\psi(t_3))
%&\geq
%(e_1+e_2,\psi(t_3))
%&&\text{(by (\ref{e1}))}
%\\&\geq0.
%\end{align*}
%Then
%\begin{align*}
%1&\geq
%(\phi(t_1),\phi(t_3))
%\\&=
%(-e_1+e_2,\psi(t_3))+1
%&&\text{(by (\ref{t1}))}
%\\&\geq1
%\end{align*}
%However,
%\begin{align*}
%-1&=(\psi(t_1),\psi(t_3))
%\\&=
%(-e_1+e_2,\psi(t_3))
%\\&\geq0.
%\end{align*}
This is a contradiction, and we have proved our claim.
Now (iii) is an immediate consequence of this claim.

Continuing the proof of the second part of (ii),
if $|N_{{\TS}^-(\Ho)}(s)|=4$, then we may assume
\begin{align*}
\psi(t'_1)&=-\be_1+\be_3,\\
\psi(t'_2)&=-\be_1-\be_3,
\end{align*}
where 
$\{t'_1,t'_2\}= N_{{\TS}^-(\Ho)}(s)\cap  N_{\Ho}^s(f')$.
It follows that $t_1,t_2,t'_1,t'_2$ are pairwise non-adjacent in 
$\TS^-(\Ho)$. By our claim, none of $t_1,t_2,t'_1,t'_2$ is
adjacent to any vertex other than $s$ in $\TS^-(\Ho)$.
Since $\TS^-(\Ho)$ is connected by Proposition~\ref{p:001},
$\TS^-(\Ho)$ is isomorphic to the graph $\tilde{\D}_4$.
%Finally, to prove (iii), suppose 
%$\{t_1,t_2\}=N_{\TS^-(\Ho)}(s)\cap N_{\Ho}(f)$
%and
%$\{t'_1\}=N_{\TS^-(\Ho)}(s)\cap N_{\Ho}(f')$.
\end{proof}

\begin{lemma}\label{lm:004_1}
%Let $\Ho$ be a fat Hoffman graph 
%with smallest eigenvalue at least $-3$
%such that 
%$\redlam (\Ho,3)$ is isomorphic to a sublattice of the standard lattice
%$\zz^n$.
Suppose that slim vertices $s,t^+,t^-$ share a common fat neighbor
and that they are represented as
\begin{align*}
\psi(s)&=\be_1+\be_2,\\
\psi(t^\pm)&=-\be_1\pm\be_3.
\end{align*}
If there exists a slim vertex $t$ with 
\[
\psi(t)=-\be_2+\be_j\quad\text{for some $j\notin\{1,2,3\}$,}
\]
then the vertices $t^\pm$ have $s$ as their unique neighbor in
$\TS^-(\Ho)$.
\end{lemma}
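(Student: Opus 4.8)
The plan is to first pin down the fat neighbors of all the vertices involved, then argue by contradiction about a hypothetical extra $\TS^-$-neighbor of $t^+$, and finally invoke a symmetry to handle $t^-$. Throughout I will use the identity $(\phi(x),\phi(y))=(\psi(x),\psi(y))+|N_{\Ho}^f(x,y)|$, which follows from $\phi(x)=\psi(x)+\sum_{f\in N_{\Ho}^f(x)}\be_f$ and the orthogonality of the $\zz^n$ and $\zz^{V_f(\Ho)}$ components, together with the fact that $(\phi(x),\phi(y))\in\{0,1\}$, hence $(\phi(x),\phi(y))\le1$, for distinct slim vertices $x,y$ by Definition~\ref{df:rep}.

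First I would establish that $s,t^+,t^-$ and $t$ all have a single common fat neighbor $f$. Each of $\psi(s),\psi(t^\pm),\psi(t)$ has squared norm $2$, so by (\ref{eq:psis}) each of these four vertices has exactly one fat neighbor. Since $s,t^+,t^-$ share a fat neighbor by hypothesis, that neighbor $f$ is the unique fat neighbor of each of them. For $t$, note that $(\psi(s),\psi(t))=(\be_1+\be_2,-\be_2+\be_j)=-1$, so $s$ and $t$ are adjacent in $\TS^-(\Ho)$; then $(\phi(s),\phi(t))\le1$ forces $|N_{\Ho}^f(s,t)|\ge1$, and since $s$ has only $f$, the vertex $t$ has $f$ as its unique fat neighbor as well.

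Next, the isometry sending $\be_3\mapsto-\be_3$ and fixing all other coordinates swaps $t^+$ and $t^-$ while fixing $\psi(s)$ and $\psi(t)$ (the latter because $j\neq3$), so it preserves the whole hypothesis; hence it suffices to prove that $t^+$ has no $\TS^-$-neighbor other than $s$. So suppose $u\neq s$ satisfies $(\psi(u),\psi(t^+))=-1$. Among $\{s,t^+,t^-,t\}$ only $s$ has inner product $-1$ with $t^+$, so $u$ is automatically distinct from all four. Since $(\phi(u),\phi(t^+))\le1$ we get $|N_{\Ho}^f(u,t^+)|\ge1$, so $u$ shares $f$ with $t^+$; thus $u$ has $f$ as a fat neighbor and shares $f$ with each of $s,t^-,t$. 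Consequently $(\phi(u),\phi(w))\le1$ gives $(\psi(u),\psi(w))\le0$ for $w\in\{s,t^-,t\}$, while $(\psi(u),\psi(t^+))=-1$. In coordinates these read $\psi(u)_1+\psi(u)_2\le0$, $-\psi(u)_1+\psi(u)_3=-1$, $-\psi(u)_1-\psi(u)_3\le0$, and $-\psi(u)_2+\psi(u)_j\le0$, where each entry lies in $\{0,\pm1\}$ and, since $|N_{\Ho}^f(u)|\ge1$, at most two entries are nonzero by (\ref{eq:psis}).

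Finally I would solve this small system. The equation from $t^+$ forces $(\psi(u)_1,\psi(u)_3)$ to be either $(1,0)$ or $(0,-1)$; the inequality from $t^-$ rules out $(0,-1)$, leaving $\psi(u)_1=1$, $\psi(u)_3=0$. The inequality from $s$ then forces $\psi(u)_2=-1$, which already uses both admissible nonzero coordinates, so $\psi(u)_j=0$. But the inequality from $t$ now reads $-\psi(u)_2+\psi(u)_j=1\le0$, a contradiction. Hence no such $u$ exists. I expect the only genuinely delicate point to be recognizing that the hypothesis on the auxiliary vertex $t$ is exactly what kills the single surviving case $\psi(u)=\be_1-\be_2$; everything else is a routine finite check once the common fat neighbor $f$ has been identified.
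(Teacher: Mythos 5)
Your proposal is correct and takes essentially the same route as the paper's proof: both identify $f$ as the common unique fat neighbor of $s,t^+,t^-,t$, observe that any further $\TS^-$-neighbor of $t^\pm$ must also be adjacent to $f$ and hence have nonpositive $\psi$-inner product with each of the remaining three vertices, and reach a contradiction (the paper leaves the finite coordinate check implicit with ``This is impossible,'' while you carry it out explicitly, and your symmetry $\be_3\mapsto-\be_3$ plays the role of the paper's ``similarly''). One small wording slip: at the two points where you deduce $|N^f_{\Ho}(u,t^+)|\geq 1$ (and earlier $|N^f_{\Ho}(s,t)|\geq 1$) you cite $(\phi(x),\phi(y))\leq 1$, but the inequality actually doing the work is $(\phi(x),\phi(y))\geq 0$, which is already contained in your stated fact that $(\phi(x),\phi(y))\in\{0,1\}$ for distinct slim vertices.
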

\begin{proof}
Note that each of the vertices $s,t^\pm,t$ has a unique fat
neighbor. Since $(\psi(s),\psi(t^\pm))=(\psi(s),\psi(t))=-1$,
these vertices share a common fat neighbor $f$.
Suppose that there exists a slim vertex $t'$ adjacent to
$t^-$ in $\TS^-(\Ho)$. This means $(\psi(t'),\psi(t^-))=-1$.
Since $f$ is the unique fat neighbor of $t^-$, 
$t'$ is adjacent to $f$, and
hence $(\psi(t'),\psi(u))\leq0$ for $u\in\{t,t^+,s\}$.
This is impossible.
Similarly, there exists no slim vertex adjacent to $t^+$ in $\TS^-(\Ho)$.
\end{proof}

\begin{lemma}\label{lm:003_2}
%Let $\Ho$ be a fat indecomposable 
%%$(-3)$-fat-saturated 
%Hoffman graph 
%with smallest eigenvalue at least $-3$
%such that 
%$\redlam (\Ho,3)$ is isomorphic to a sublattice of the standard lattice
%$\zz^n$.
Suppose that $s\in V_s(\Ho)$ has exactly one fat neighbor in $\Ho$.
Then the following statements hold:
\begin{enumerate}
\item
$|N_{{\TS}^-(\Ho)}(s)|\leq4$, and if equality holds, then
$\TS^-(\Ho)$ is isomorphic to the
graph $\tilde{\D}_4$,
\item
if $|N_{{\TS}^-(\Ho)}(s)|=3$, then 
two of the vertices in $N_{{\TS}^-(\Ho)}(s)$ have $s$ as their
unique neighbor in $\TS^-(\Ho)$.
%$\TS^-(\Ho)$ contains a subgraph
%isomorphic to the graph $\D_5$.
\end{enumerate}
\end{lemma}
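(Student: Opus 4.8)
The plan is to run the argument of Lemma~\ref{lm:003_1} almost verbatim, with the single fat neighbor $f$ of $s$ playing the role that the \emph{two} fat neighbors played there, and with the two distinguished coordinates of $\psi(s)$ doing the splitting that the two fat neighbors did there. Since $s$ has exactly one fat neighbor, (\ref{eq:psis}) gives $(\psi(s),\psi(s))=2$, so I may normalize $\psi(s)=\be_1+\be_2$. First I would record the basic facts about a neighbor $t$ of $s$ in $\TS^-(\Ho)$. From $(\psi(s),\psi(t))=-1$ and the fact that $s$ has the unique fat neighbor $f$, Definition~\ref{df:redrep} forces $s\not\sim t$ in $\Ho$ and $|N^f_{\Ho}(s,t)|=1$; thus every neighbor of $s$ in $\TS^-(\Ho)$ shares $f$ with $s$. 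Consequently any two neighbors $t_1,t_2$ of $s$ both contain $\be_f$ in their $\phi$-images, so $(\phi(t_1),\phi(t_2))\geq(\psi(t_1),\psi(t_2))+1$, and combined with $(\phi(t_1),\phi(t_2))\leq1$ this yields $(\psi(t_1),\psi(t_2))\leq0$, exactly as in Lemma~\ref{lm:003_1}(i).

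Next I would classify the neighbors. Since the coordinates of $\psi(t)$ lie in $\{0,\pm1\}$ and $\psi(t)_1+\psi(t)_2=(\psi(s),\psi(t))=-1$, exactly one of $\psi(t)_1,\psi(t)_2$ equals $-1$ and the other vanishes; this partitions $N_{\TS^-(\Ho)}(s)$ into a ``$\be_1$-type'' and a ``$\be_2$-type''. Among $\be_1$-type neighbors the inequality $(\psi(t_1),\psi(t_2))\leq0$ is violated unless they have the shape $-\be_1+\be_k$ and $-\be_1-\be_k$ for one common index $k\geq3$; hence there are at most two of each type, giving $|N_{\TS^-(\Ho)}(s)|\leq4$, the bound in (i).

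For the equality case I would normalize the $\be_1$-pair to $-\be_1\pm\be_3$ and the $\be_2$-pair to $-\be_2\pm\be_l$; the cross inner products being $\leq0$ force $l\neq3$, so $l=4$ after relabeling, and then all four leaf-vectors are pairwise orthogonal, hence mutually non-adjacent in $\TS^-(\Ho)$. Applying Lemma~\ref{lm:004_1} with $(t^+,t^-)$ the $\be_1$-pair and $t=-\be_2+\be_4$ shows the $\be_1$-pair has $s$ as its unique $\TS^-$-neighbor, and the symmetric application (relabelling $1\leftrightarrow2$, $3\leftrightarrow4$) does the same for the $\be_2$-pair. Since $\TS^-(\Ho)$ is connected by Proposition~\ref{p:001}, its vertex set is exactly $s$ together with these four leaves and its edges form the star, so $\TS^-(\Ho)\cong\tilde{\D}_4$. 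For (ii), I would take WLOG the three neighbors to be the $\be_1$-pair $-\be_1\pm\be_3$ together with one $\be_2$-type vertex $u$. If $u$ has a single fat neighbor then $u=-\be_2\pm\be_l$ with $l\geq4$ (again $l=3$ is excluded by the cross inner product), and Lemma~\ref{lm:004_1} shows the $\be_1$-pair has $s$ as its unique neighbor; so two of the three neighbors are as claimed.

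The step I expect to be the main obstacle is this last uniqueness claim in the remaining subcase, where $u$ has \emph{two} fat neighbors and so $u=-\be_2$ falls outside the hypotheses of Lemma~\ref{lm:004_1}. Here I would argue directly: a hypothetical second neighbor $w\neq s$ of $-\be_1+\be_3$ shares $f$ with each of $s$, $-\be_1-\be_3$ and $u$ (all of which contain $\be_f$), so the three shared-$f$ inequalities $(\psi(w),\psi(s))\leq0$, $(\psi(w),-\be_1-\be_3)\leq0$, $(\psi(w),-\be_2)\leq0$ together with $(\psi(w),-\be_1+\be_3)=-1$ pin down $w_1=0$, $w_3=-1$, $w_2=0$ and then contradict $(\psi(w),-\be_1-\be_3)\leq0$. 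Thus $-\be_1+\be_3$ and, symmetrically, $-\be_1-\be_3$ have $s$ as their unique $\TS^-$-neighbor, completing (ii). Keeping the normalizations consistent so that Lemma~\ref{lm:004_1} can be invoked in both the $=4$ and $=3$ cases and in the symmetric direction is the part requiring the most care.
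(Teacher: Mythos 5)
Your proof is correct and follows the paper's proof almost step for step: the normalization $\psi(s)=\be_1+\be_2$, the observation that every $\TS^-$-neighbor of $s$ must be adjacent to the unique fat neighbor $f$, the resulting inequality $(\psi(t),\psi(t'))\leq 0$ for distinct neighbors, the classification of $\psi(t)$ into the two types with a forced $-1$ in coordinate $1$ or $2$, the bound of two per type, and the appeal to Lemma~\ref{lm:004_1} plus Proposition~\ref{p:001} in the $|N_{\TS^-(\Ho)}(s)|=4$ case are all exactly the paper's argument. The one place you genuinely diverge is the sub-case of (ii) where the third neighbor is $u=-\be_2$ (a vertex with two fat neighbors): you correctly notice that Lemma~\ref{lm:004_1} as stated requires a norm-$2$ vertex $t$ with $\psi(t)=-\be_2+\be_j$, $j\notin\{1,2,3\}$, so it does not literally apply here, and you supply a direct computation (the constraints $(\psi(w),\be_1+\be_2)\leq 0$, $(\psi(w),-\be_1-\be_3)\leq 0$, $(\psi(w),-\be_2)\leq 0$ together with $(\psi(w),-\be_1+\be_3)=-1$ force $w_1=0$, $w_3=-1$, $w_2=0$ and then contradict the second inequality). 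The paper instead disposes of $|N_{\TS^-(\Ho)}(s)|=3$ with a single ``we may assume without loss of generality that $\psi(t^\pm)=-\be_2\pm\be_4$'' followed by a citation of Lemma~\ref{lm:004_1}, which silently skips exactly this configuration; your explicit treatment (which works because $-\be_2$ imposes an even stronger constraint, $w_2\geq 0$, than the norm-$2$ vertex in the lemma) patches that small lacuna, so your write-up is, if anything, more complete than the paper's on this point.
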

\begin{proof}
Let $\psi$ be the reduced representation of norm $3$ of $\Ho$.
Since $s$ has exactly one fat neighbor, 
$(\psi(s),\psi(s))=2$. This means that we may assume without loss
of generality 
$\psi(s)=\be_1+\be_2$.
Let $f$ be the unique fat neighbor of $s$.
If $t\in N_{\TS^-(\Ho)}(s)$, then $t$ is adjacent to $f$, hence
\begin{equation}\label{pt}
\psi(t)\in\{-\be_1,-\be_2\}\cup\{-\be_i\pm\be_j\mid
1\leq i\leq 2<j\leq n\}.
\end{equation}
If $t,t'\in N_{\TS^-(\Ho)}(s)$ are distinct, then
\begin{align*}
1&\geq(\phi(t),\phi(t'))
\\&\geq
(\psi(t)+\be_f,\psi(t')+\be_f)
\nexteq
(\psi(t),\psi(t'))+1,
\end{align*}
Thus we have $(\psi(t),\psi(t'))\leq0$. 
If $|N_{\TS^-(\Ho)}(s)|\geq3$, then by (\ref{pt}), we may
assume without loss of generality that
there exists $t\in N_{\TS^-(\Ho)}(s)$ with
$\psi(t)=-\be_1+\be_3$. Then $\psi(N_{\TS^-(\Ho)}(s)\setminus\{t\})$ 
is contained in
\[
\{-\be_2-\be_3\},\{-\be_2,-\be_1-\be_3\},\text{ or }
\{-\be_1-\be_3\}\cup\{-\be_2\pm\be_j\}
\]
for some $j$ with $3\leq j\leq n$. Thus
$|N_{\TS^-(\Ho)}(s)|\leq4$, and equality 
%\clr{implies that
%$\TS^-(\Ho)$ is isomorphic to the graph $\tilde{\D}_4$
%by Lemma~\ref{lm:004}.}
holds only if
\[
\psi(N_{\TS^-(\Ho)}(s))=
\{-\be_1\pm\be_3,-\be_2\pm\be_j\}
\]
for some $j$ with $3\leq j\leq n$. In this case, Lemma~\ref{lm:004_1}
implies that each of the vertices in $N_{\TS^-(\Ho)}(s)$ has 
$s$ as a unique neighbor. This means that $\TS^-(\Ho)$ contains
a subgraph isomorphic to the graph $\tilde{\D}_4$ as a connected
component. Since $\TS^-(\Ho)$ is connected by Proposition~\ref{p:001}, 
we have the desired result.
%the subgraph of $\Ho$ induced by $\{s\}\cup N_{\TS^-(\Ho)}(s)$
%is \clr{
%Then the slim vertex in $N_{\TS^-(\Ho)}(s) $ has the unique fat neighbor 
%$f$ in $\Ho$. It follows from Lemma~\ref{lm:004} that
%$\TS^-(\Ho)$ is isomorphic to the graph $\tilde{\D}_4$.}
%This proves (i).

To prove (ii), suppose $|N_{\TS^-(\Ho)}(s)|=3$. Then
we may assume without loss of generality that
$N_{\TS^-(\Ho)}(s)=\{t,t^+,t^-\}$, where
$\psi(t^\pm)=-\be_2\pm\be_4$. 
Then by Lemma~\ref{lm:004_1}, the vertices $t^\pm$ have
$s$ as their unique neighbor in $\TS^-(\Ho)$.
%We claim that
%the vertices $t^\pm$ have $s$ as their unique neighbor in $\TS^-(\Ho)$.
%Indeed, suppose that $t'$ is a neighbor of $t^+$ in $\TS^-(\Ho)$.
%Then $t'$ is adjacent to $f$, hence
%\[
%(\psi(t'),\psi(t^+))=-1\text{ and }
%(\psi(t'),\psi(u))\leq0\quad(\text{for all }u\in\{s,t,t^-\}).
%\]
%This is impossible.
\end{proof}

\begin{theorem}\label{thm:001}
Let $\Ho$ be a fat indecomposable \sat{(-3)}\ Hoffman graph 
% satisfying (FS)
such that 
$\redlam (\Ho,3)$ is isomorphic to a sublattice of the standard lattice
$\zz^n$.
%	and a reduced representation $\psi$ of norm $3$ of $\Ho$.
Then $\TS^-(\Ho)$ is a connected graph which is isomorphic to
%	a subgraph of the graphs $\tilde{\A}_m$ or $\tilde{\D}_m$
	$\A_m,\D_m,\tilde{\A}_m$ or $\tilde{\D}_m$ %2012-05-20
	for some positive integer $m$.
\end{theorem}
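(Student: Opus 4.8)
The plan is to show that $\TS^-(\Ho)$ is a connected graph with maximum degree at most $4$, in which no vertex lies on two distinct cycles, and whose only vertices of degree $4$ or two-cycle meeting points force it to be $\tilde{\D}_4$; graph-theoretically this pins it down to the list $\A_m,\D_m,\tilde\A_m,\tilde\D_m$. Connectedness is already in hand from Proposition~\ref{p:001}. The structural control on degrees comes from Lemmas~\ref{lm:003_1} and \ref{lm:003_2}, which together cover every slim vertex (recall that by Theorem~\ref{thm:007}, under the integrality hypothesis every slim vertex has one or two fat neighbors, since three fat neighbors would force $\Ho\cong\Ho^{(3)}$, which is decomposable-trivial and not \sat{(-3)}). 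Both lemmas give $|N_{\TS^-(\Ho)}(s)|\le 4$ with equality only when $\TS^-(\Ho)\cong\tilde{\D}_4$, so I may assume from now on that every vertex has degree at most $3$.

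First I would dispose of the injectivity issue: by Lemma~\ref{lm:inj}, either $\Ho$ is a subgraph of the graph of Example~\ref{exa:A3t} (whose $\TS^-$ is $\tilde{\A}_3$ or a sub-path $\A_m$, so we are done directly), or $\psi$ is injective. So assume $\psi$ injective, identifying each slim vertex with its norm-$1$ or norm-$2$ image in $\zz^n$. The heart of the argument is to prove that $\TS^-(\Ho)$ contains no vertex of degree $3$ unless it is one of the three exceptional branch points allowed in $\D_m$ or $\tilde{\D}_m$. The degree-$3$ clauses (iii) of Lemmas~\ref{lm:003_1} and \ref{lm:003_2} say that whenever a vertex $s$ has degree $3$, at least two of its neighbors are \emph{leaves}, i.e.\ have $s$ as their unique neighbor in $\TS^-(\Ho)$. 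This is exactly the local picture at the ends of $\D_m$ and $\tilde{\D}_m$: a single branch vertex with two pendant leaves. The key global claim is then that a connected graph of maximum degree $\le 3$ in which every degree-$3$ vertex has at least two leaf-neighbors must be a path, a cycle, a $\D_m$, or a $\tilde{\D}_m$.

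To establish that global claim I would argue as follows. Call a vertex \emph{branching} if it has degree $3$. If there are no branching vertices, then $\TS^-(\Ho)$ has maximum degree $\le2$ and, being connected, is a path $\A_m$ or a cycle $\tilde{\A}_m$. If there is a branching vertex $s$, then two of its three neighbors are leaves and the third neighbor $r$ is the unique gateway to the rest of the graph. Removing $s$ and its two leaves leaves a connected graph rooted at $r$, again of max degree $\le3$ and satisfying the same leaf condition; by induction on the number of vertices it is a path, cycle, $\D_m$ or $\tilde{\D}_m$. Re-attaching the branch $s$ (with its two leaves) to $r$ can only produce another graph of the same type: if the reduced graph was a path ending at $r$ we get $\D_m$; if it was a path with a $\D$-type branch at the far end we get $\tilde{\D}_m$; a cycle or $\tilde{\D}$-ending cannot arise because $r$ would then have too high a degree. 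This finishes the classification.

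The main obstacle I anticipate is the bookkeeping in this last inductive combinatorial step: one must check that no \emph{second} branch vertex can appear except at the opposite end (yielding precisely $\tilde{\D}_m$), and that a branch vertex can never sit on a cycle. Both follow from the leaf condition together with the degree bound, but ruling out configurations like two branch vertices in the interior of a path, or a branch vertex adjacent to a cycle, requires care; the cleanest route is to observe that each branch vertex \emph{consumes} two of its neighbors as leaves, so branch vertices are forced to the extremities of the underlying path and there can be at most two of them. I expect that verifying the precise interaction of the degree-$3$ clauses with connectedness—ensuring we never land outside $\{\A_m,\D_m,\tilde{\A}_m,\tilde{\D}_m\}$—will be where the argument needs the most explicit case analysis, even though each individual case is routine.
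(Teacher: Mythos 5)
Your proposal follows essentially the same route as the paper: connectedness from Proposition~\ref{p:001}, the degree bound of $4$ with the $\tilde{\D}_4$ conclusion from Lemmas~\ref{lm:003_1}(ii) and \ref{lm:003_2}(i), and the two-leaf clauses \ref{lm:003_1}(iii)/\ref{lm:003_2}(ii) forcing any degree-$3$ vertex to be an extremity of a $\D_m$ or $\tilde{\D}_m$ shape, with the paper's final combinatorial step argued just as informally as yours. Two minor remarks: $\Ho^{(3)}$ is in fact indecomposable and \sat{(-3)}, so the paper disposes of the three-fat-neighbor case not by excluding it but by noting that then $\Ho\cong\Ho^{(3)}$ and $\TS^-(\Ho)$ is a single vertex (an $\A_1$, consistent with the theorem); and the detour through Lemma~\ref{lm:inj} is unnecessary, since the paper's proof never needs $\psi$ to be injective.
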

%\clb{
\begin{proof}%[Proof of Theorem~\ref{thm:001}]
\label{proofofthm:001}
From Proposition~\ref{p:001},
	${\TS}^-(\Ho)$ is connected.
First we suppose that
	%$\deg_{{\TS}^-}(s)\le 2$ for any vertex $s$ of ${\TS}^-$.
%	the degree of any vertex in ${\TS}^-(\Ho)$ is at most two.
the maximum degree of ${\TS}^-(\Ho)$ is at most $2$.
Then ${\TS}^-(\Ho)\cong \tilde{\A}_m$ or ${\TS}^-(\Ho)\cong \A_m$
for some positive integer $m$.

Next we suppose that
	the degree of some vertex $s$ in ${\TS}^-(\Ho)$ is at least $3$.
%Let $s \in S$ such that the degree of $s$ in ${\TS}^-$ is at least three.
%From Lemma~\ref{lm:004},
From Lemma~\ref{lm:003_1}(ii) and Lemma~\ref{lm:003_2}(i),
	$\degr_{{\TS}^-(\Ho)}(s)\le 4$,
	and $\TS^-(\Ho)\cong\tilde{\D}_4$ if $\degr_{\TS^-(\Ho)}(s)= 4$.
%	i.e.,
%	the resulting fat graph is $(-3)$-irreducible.
Thus, for the remainder of this proof,
	we suppose that the maximum degree of ${\TS}^-(\Ho)$ is $3$ and
	$\degr_{\TS^-(\Ho)}(s)=3$.

It follows from Lemma~\ref{lem:sat} that if
	$\Ho$ has a subgraph isomorphic to 
%	$\Ho_{18}$, then $\Ho\cong\Ho_{18}$,
$\Ho^{(3)}$, then $\Ho\cong\Ho^{(3)}$,
	in which case $\TS^-(\Ho)$ consists of a single vertex, and the assertion holds.
Hence it remains to consider two cases:
	$s$ is adjacent to exactly two fat vertices,
	and $s$ is adjacent to exactly one fat vertex.
%Let $t_1,t_2$ and $t_3$ be the neighbors of $s$ in $\TS^-(\Ho)$.
%{\bf the case that $s$ is adjacent to three fat vertices should be handled earlier}
In either cases, by Lemma~\ref{lm:003_1}(iii) or Lemma~\ref{lm:003_2}(ii),
$s$ has at most one neighbor $t$ with degree greater than $1$.
Thus, the only way to extend this graph is
	by adding a slim neighbor adjacent to $t$.
	We can continue this process, but once we encounter a vertex of degree
	$3$, then the process stops by Lemma~\ref{lm:003_1}(iii) or Lemma~\ref{lm:003_2}(ii). 
	Thus, $\TS^-(\Ho)$ is isomorphic to one of the graphs $\D_m$ or $\tilde{\D}_m$.
\end{proof}
%}

\begin{exa}\label{exa:An}
Let $n_1,\dots,n_k$ be positive integers satisfying
$n_i\geq2$ for $1< i< k$.
Set $m_j=\sum_{i=1}^j n_i$ and $\ell_j=m_j-j$ for
$j=0,1,\dots,k$.
%$n=1+n_1+\cdots+n_k$. Let
Let $\Ho$ be the Hoffman graph with
$V_s(\Ho)=\{v_i\mid i=0,1,\dots,m_k\}$, 
$V_f(\Ho)=\{f_j\mid j=0,1,\dots,k+1\}$, and
\begin{align*}
E(\Ho)=&\{\{v_i,v_{i'}\}\mid 1\leq j\leq k,\;m_{j-1}< i+1<i'\leq m_j\}
\\&\cup\{\{v_{m_j-1},v_{m_j+1}\}\mid 1<j<k\}
\\&\cup\{\{f_j,v_i\}\mid 1\leq j<k,\;m_{j-1}\leq i\leq m_j\}
\cup\{\{f_0,v_0\},\{f_{k+1},v_{m_k}\}\}.
\end{align*}
%where $n_0=0$.
Then $\Ho$ is a fat Hoffman graph with
smallest eigenvalue at least $-3$, and $\TS^-(\Ho)$ is isomorphic
to the Dynkin graph $\A_{m_k+1}$.
Indeed, $\Ho$ has a reduced representation $\psi$ of norm $3$ defined by
\[
\psi(v_i)=\begin{cases}
(-1)^j\be_{\ell_j}&\text{if $i=m_j$, $0\leq j\leq k$,}\\
(-1)^j(\be_{i-j}-\be_{i-j-1})&\text{if $m_j<i<m_{j+1}$, $0\leq j< k$.}
\end{cases}
\]
Moreover, $\Ho$ is \sat{(-3)}. Indeed, suppose not, and let $\tilde{\Ho}$
be a Hoffman graph obtained by attaching a new fat vertex 
$f$ to $\Ho$, and let $\tilde{\psi}$ be a reduced representation of norm
$3$ of $\tilde{\Ho}$. If $f$ is adjacent to $v_{m_j}$ for some
$j\in\{0,1,\dots,k\}$, then $v_{m_j}$ has three fat neighbors in $\tilde{\Ho}$,
hence $\tilde{\psi}(v_{m_j})=0$. This is absurd, since
$(\tilde{\psi}(v_{m_j}),\tilde{\psi}(v_{m_j\pm1}))=-1$.
If $f$ is adjacent to $v_i$ with $m_{j-1}<i<m_j$, then
$\|\tilde{\psi}(v_i)\|=1$. Since
$(\tilde{\psi}(v_{i-1}),\tilde{\psi}(v_{i}))=
(\tilde{\psi}(v_{i+1}),\tilde{\psi}(v_{i}))=-1$ while
$(\tilde{\psi}(v_{i-1}),\tilde{\psi}(v_{i+1}))=0$, we may assume
$\tilde{\psi}(v_{i\pm1})=\be_1\pm\be_2$,
$\tilde{\psi}(v_{i})=-\be_1$.
Then $i+1<m_j$, and
\begin{align*}
0&=(\tilde{\psi}(v_{i+2}),\tilde{\psi}(v_{i-1}))
\nexteq
(\tilde{\psi}(v_{i+2}),\be_1-\be_2)
\nexteq
(\tilde{\psi}(v_{i+2}),2\be_1-(\be_1+\be_2))
\nexteq
-2(\tilde{\psi}(v_{i+2}),\tilde{\psi}(v_i))
-(\tilde{\psi}(v_{i+2}),\tilde{\psi}(v_{i+1}))
\nexteq
1,
\end{align*}
which is absurd.
%$\|\tilde{\psi}(v_i)\|=1$. Since
%$\|\tilde{\psi}(v_{m_{j-1}})\|=\|\tilde{\psi}(v_{m_{j}})\|=1$ and
%$(\tilde{\psi}(v_{m_{j-1}}),\tilde{\psi}(v_i))=
%(\tilde{\psi}(v_{m_{j}}),\tilde{\psi}(v_i))=-1$,
%$\tilde{\psi}(v_{m_{j-1}})=\tilde{\psi}(v_{m_{j}})$ is forced,
%but this is again absurd, as 
%$(\tilde{\psi}(v_{m_{j-1}}),\tilde{\psi}(v_{m_{j}}))=0$.

We note that the graph $\TS^+(\Ho)$ has the following edges:
\[
\{\{v_{m_j-1},v_{m_j+1}\}\mid 1<j<k\}.
\]
\end{exa}

\begin{exa}\label{exa:A5}
Let $\Ho$ be the Hoffman graph constructed in 
Example~\ref{exa:An} by setting
%$(n_1,n_2,n_3)=(1,2,1)$.
$n_1=1$, $n_2=2$, and $n_3=1$.
Let $\Ho_0$ (resp.\ $\Ho_1$) be the Hoffman graph obtained from
$\Ho$ by identifying the fat vertices $f_4$ and $f_0$
(resp.\ $f_4$ and $f_1$), and adding edges $\{v_0,v_2\},\{v_2,v_4\}$.
Then $\Ho_0$ and $\Ho_1$ are fat \sat{(-3)}\ Hoffman graphs
and $\TS^-(\Ho_i)$ is isomorphic to the Dynkin graph $\A_5$
for $i=0,1$. We note that $\TS^+(\Ho_i)$ has two
edges $\{v_0,v_2\},\{v_2,v_4\}$.
\end{exa}
%which admits the representation of norm $3$
%given by
%\begin{align*}
%\phi(x_1)&=\be_1+\be_{f_1}+\be_{f_2},\\
%\phi(x_2)&=-\be_1+\be_{f_2}+\be_{f_3},\\
%\phi(x_3)&=\be_1+\be_2+\be_{f_3},\\
%\phi(x_4)&=-\be_2+\be_{f_1}+\be_{f_3},\\
%\phi(x_5)&=\be_2+\be_{f_1}+\be_{f_4}.
%\end{align*}
%\[
%\begin{array}{cccc|cc}
% &+&+& &+& \\
% & &+&+&-& \\
% & & &+&+&+ \\
% &+& &+& &- \\
%+&+& & & &+ \\
%\end{array}
%\]
%Then $\Ho$ is fat indecomposable \sat{(-3)}\ Hoffman graph,
%and it is also maximal, in the
%sense that no fat indecomposable \sat{(-3)}\ Hoffman graph
%contains $\Ho$.
%\end{exa}
%The graph given in Example~\ref{exa:A5} is not the only
%\sat{(-3)}\ Hoffman graph $\Ho$ such that $\TS^-(\Ho)$
%is isomorphic to the Dynkin graph $\A_5$. The following is another
%such graph.
%\begin{exa}\label{exa:A5a}
%Let $\Ho$ be the Hoffman graph which admits the representation of norm $3$
%given by
%\begin{align*}
%\phi(x_1)&=\be_1+\be_{f_1}+\be_{f_2},\\
%\phi(x_2)&=-\be_1+\be_{f_2}+\be_{f_3},\\
%\phi(x_3)&=\be_1+\be_2+\be_{f_3},\\
%\phi(x_4)&=-\be_2+\be_{f_3}+\be_{f_4},\\
%\phi(x_5)&=\be_2+\be_{f_1}+\be_{f_4}.
%\end{align*}
%Then $\Ho$ is fat indecomposable \sat{(-3)}\ Hoffman graph.
%\end{exa}
Examples~\ref{exa:An} and \ref{exa:A5} indicate that
$\Ho$ is not determined by $\TS^\pm(\Ho)$. We plan to
discuss the classification of fat indecomposable \sat{(-3)}\ 
Hoffman graphs with prescribed special graph in the
subsequent papers.

\end{document}